\newenvironment{customthm}[1]
  {\innercustomthm}
  {\endinnercustomthm}
\newtheorem{thm}{Theorem}[section]
\newtheorem{prop}[thm]{Proposition}
\newtheorem{cor}[thm]{Corollary}
\newtheorem{lem}[thm]{Lemma}
\theoremstyle{definition}
\newtheorem{define}[thm]{Definition}
\theoremstyle{remark}
\newtheorem{rem}[thm]{Remark}
\newtheorem{example}[thm]{Example}
\newcommand{\ve}[1]{\boldsymbol{\mathbf{#1}}}
\newcommand{\R}{\mathbb{R}}
\newcommand{\Z}{\mathbb{Z}}
\newcommand{\N}{\mathbb{N}}
\newcommand{\C}{\mathbb{C}}
\newcommand{\T}{\mathbb{T}}
\renewcommand{\d}{\partial}
\renewcommand{\subset}{\subseteq}
\renewcommand{\tilde}{\widetilde}
\renewcommand{\bar}{\overline}
\newcommand{\iso}{\cong}
\newcommand{\ufrs}{\underline{\frs}}
\DeclareMathOperator{\cl}{{cl}}
\DeclareMathOperator{\GL}{{GL}}
\DeclareMathOperator{\gr}{{gr}}
\DeclareMathOperator{\id}{{id}}
\DeclareMathOperator{\Int}{{int}}
\DeclareMathOperator{\Spin}{{Spin}}
\DeclareMathOperator{\Sym}{{Sym}}
\DeclareMathOperator{\Symp}{{Symp}}
\DeclareMathOperator{\Lef}{{Lef}}
\newcommand{\bF}{\mathbb{F}}
\newcommand{\bK}{\mathbb{K}}
\newcommand{\bL}{\mathbb{L}}
\newcommand{\bT}{\mathbb{T}}
\newcommand{\bU}{\mathbb{U}}
\newcommand{\CP}{\mathbb{CP}}
\newcommand{\cA}{\mathcal{A}}
\newcommand{\cD}{\mathcal{D}}
\newcommand{\cH}{\mathcal{H}}
\newcommand{\cL}{\mathcal{L}}
\newcommand{\cM}{\mathcal{M}}
\newcommand{\cO}{\mathcal{O}}
\newcommand{\cS}{\mathcal{S}}
\newcommand{\cT}{\mathcal{T}}
\newcommand{\cW}{\mathcal{W}}
\newcommand{\frT}{\mathfrak{T}}
\newcommand{\frs}{\mathfrak{s}}
\newcommand{\CF}{\mathit{CF}}
\newcommand{\CFu}{\underline{\mathit{CF}}}
\newcommand{\HFKh}{\widehat{\mathit{HFK}}}
\newcommand{\HFLh}{\widehat{\mathit{HFL}}}
\newcommand{\CFLh}{\widehat{\mathit{CFL}}}
\newcommand{\SFH}{\mathit{SFH}}
\newcommand{\HFK}{\mathit{HFK}}
\newcommand{\CFL}{\mathit{CFL}}
\newcommand{\HFL}{\mathit{HFL}}
\newcommand{\PD}{\mathit{PD}}
\newcommand{\xs}{\ve{x}}
\newcommand{\ys}{\ve{y}}
\newcommand{\zs}{\ve{z}}
\newcommand{\ws}{\ve{w}}
\newcommand{\ps}{\ve{p}}
\newcommand{\as}{\ve{\alpha}}
\newcommand{\bs}{\ve{\beta}}
\newcommand{\taus}{\ve{\tau}}
\newcommand{\omegas}{\ve{\omega}}
\renewcommand{\a}{\alpha}
\renewcommand{\b}{\beta}
\newcommand{\g}{\gamma}
\renewcommand{\GL}{\mathit{GL}}
\DeclareMathOperator{\Irr}{Irr}
\DeclareMathOperator{\tw}{tw}
\newcommand{\tra}{\mathfrak{T}}
\newcommand{\page}{F}
\title{Transverse invariants and exotic surfaces in the 4-ball}
\author{Andr\'as Juh\'asz}
\address{Mathematical Institute, University of Oxford, Andrew Wiles Building,
Radcliffe Observatory Quarter, Woodstock Road, Oxford, OX2 6GG, UK}
\email{juhasza@maths.ox.ac.uk}
\author{Maggie Miller}
\address{Department of Mathematics\\Princeton University\\  Princeton, NJ 08544, USA}
\email{maggiem@math.princeton.edu}
\author{Ian Zemke}
\address{Department of Mathematics\\Princeton University\\  Princeton, NJ 08544, USA}
\email{izemke@math.princeton.edu}
\begin{document}

\subjclass[2010]{57R58; 57R55; 57M27}
\keywords{4-manifolds, exotic surfaces, Heegaard Floer homology}

\begin{abstract}
Using 1-twist rim surgery, we construct infinitely many smoothly embedded, orientable surfaces in the 4-ball
bounding a knot in the 3-sphere that are pairwise topologically isotopic, but not ambient diffeomorphic.
We distinguish the surfaces using the maps they induce on perturbed link Floer homology.
Along the way, we show that the cobordism map induced by an ascending surface in a Weinstein
cobordism preserves the transverse invariant in link Floer homology.
\end{abstract}

\maketitle


\section{Introduction}

Let $S$ be a smooth surface in the smooth 4-manifold $X$. Then we say that
the surface $S'$ is an \emph{exotic} copy of $S$ if $S$ and $S'$ are
topologically isotopic, but not smoothly isotopic. It is a fundamental open
conjecture in the theory of knotted surfaces that there are no exotic
orientable unknots of any genus in $S^4$.
In fact, currently there is no example of any exotic pair of closed oriented surfaces in $S^4$.
Note that, in the non-orientable case, Finashin, Kreck, and Viro~\cite{unoriented}
constructed an infinite family of exotic copies of the standardly embedded
$\#_{10} \R P^2$ in $S^4$. We provide the first examples of exotic orientable
surfaces in the 4-ball, and in fact distinguish them up to diffeomorphism
(that is not necessarily the identity on $S^3$)
using perturbed cobordism maps on link Floer homology.
We note that Gompf~\cite[Theorem~8.1]{Gompf-nuclei} constructed punctured tori in $B^4$
that are non-diffeomorphic, and he conjectured to be pairwise homeomorphic.

\begin{thm}\label{thm:main}
  There are infinitely many knots in $S^3$ such that each bounds countably infinitely
  many properly embedded, smooth, orientable, genus one surfaces in $B^4$ that are
  pairwise topologically isotopic, but there is no diffeomorphism of $B^4$
  taking one to the other.
\end{thm}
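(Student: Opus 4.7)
The plan is to build the family via 1-twist rim surgery on an ascending genus-one surface in the standard Weinstein 4-ball, and to distinguish the resulting surfaces using the transverse invariant preservation theorem stated in the abstract. First, fix an infinite family of transverse knots $\{L_n\}_{n\geq 1}$ in $(S^3,\xi_{\mathrm{std}})$ with nonzero transverse invariant $\tra(L_n)\in\HFLh(-L_n)$, each of which is the transverse boundary of a properly embedded, genus-one ascending surface $\Sigma_n\subset(B^4,\omega_{\mathrm{std}})$; such families can be produced from suitable Legendrian knots bounding convex Lagrangian punctured tori in the Weinstein ball. Choose a nonseparating simple closed curve $\gamma\subset\Sigma_n$ whose rim torus $\gamma\times S^1$ bounds an embedded solid torus in $B^4\setminus\Sigma_n$, and for each knot $K\subset S^3$ perform 1-twist rim surgery along $\gamma$ with pattern $K$ to obtain a genus-one surface $\Sigma_{n,K}\subset B^4$ with $\partial\Sigma_{n,K}=L_n$. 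By the topological triviality of 1-twist rim surgery in simply connected complements (Kim--Ruberman, extended by Baykur--Sunukjian), the surfaces $\Sigma_{n,K}$ are pairwise topologically isotopic rel boundary as $K$ varies.

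Next, consider the perturbed link Floer cobordism map $F_{\Sigma_{n,K}}$ induced by $\Sigma_{n,K}$ equipped with a suitable decoration. By the transverse invariant preservation theorem for ascending surfaces in Weinstein cobordisms, $F_{\Sigma_n}(1)=\tra(L_n)$. Combined with a local-to-global formula for the effect of 1-twist rim surgery on the perturbed cobordism map, this should yield
\[
F_{\Sigma_{n,K}}(1)\;=\;\lambda_K\cdot\tra(L_n),
\]
where $\lambda_K$ is a scalar in the perturbation ring depending nontrivially on the pattern knot $K$ via a classical polynomial invariant (for example, a specialization of its Alexander polynomial).

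Finally, suppose there were a diffeomorphism $\phi\colon B^4\to B^4$ with $\phi(\Sigma_{n,K})=\Sigma_{n,K'}$. Naturality of the perturbed cobordism map then forces $\lambda_K$ and $\lambda_{K'}$ to agree modulo the finite action of the mapping class group of the pair $(S^3,L_n)$ on $\HFLh(-L_n)$. Choosing $\{K_m\}_{m\geq 1}$ so that the $\lambda_{K_m}$ are pairwise inequivalent under this action produces countably infinitely many non-ambient-diffeomorphic surfaces bounding $L_n$; varying $n$ yields infinitely many such knots in $S^3$. The main obstacle will be the middle step: deriving an explicit formula for $\lambda_K$ and verifying that it depends on $K$ strongly enough. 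This relies on the transverse invariant preservation theorem as the crucial geometric input, together with a neck-stretching or degeneration argument localizing the rim-surgery contribution on the Floer complex as multiplication by an invariant of $K$.
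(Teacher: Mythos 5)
Your overall architecture matches the paper's: 1-twist rim surgery on a quasipositive/ascending genus-one surface, non-vanishing of the cobordism map via the transverse invariant, a multiplication formula for the effect of rim surgery on the perturbed map, and a comparison of the resulting scalars. However, the two steps you defer are exactly where the content lies, and as stated they have genuine gaps. First, the topological isotopy: the complement of a genus-one surface in $B^4$ pushed in from a Seifert surface has $\pi_1 \cong \Z$, so results requiring simply connected (or finite cyclic $\pi_1$) complements do not apply; the paper's Theorem~\ref{thm:topiso} is designed precisely to remove any $\pi_1$ hypothesis, and its actual hypothesis is that $\gamma$ bounds a locally flat \emph{topological} disk in the complement of $S$ with the framing it induces. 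Your condition that the rim torus bound a solid torus is neither the right hypothesis nor obviously achievable, and if $\gamma$ bounded a \emph{smooth} disk the surgery would be smoothly trivial. The paper engineers the needed disk by taking $S$ to be the quasipositive genus-one Seifert surface of a twice-iterated untwisted positive Whitehead double and $\gamma$ the once-iterated double with its trivial surface framing: $\gamma$ has Alexander polynomial one, hence is topologically slice by Freedman, but cannot bound a smooth disk in the complement because quasipositivity of $S$ forces genus minimization. Without some such choice your surfaces need not be topologically isotopic, so the construction does not get off the ground.

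Second, the distinguishing step. The indeterminacy is not a ``finite action of the mapping class group'': a diffeomorphism of $(B^4,S)$ acts on the perturbation coefficients through the infinite group $\GL_{2g}(\Z)$ of base changes of $H^2$ of the complement, in addition to an automorphism of $\HFKh(K)$ and an overall monomial ambiguity. Moreover, the perturbed class a priori lives in $\HFKh(K)\otimes\bF[\R^{2g}]$, so an integrality statement (the paper's Proposition~\ref{prop:integrality}) is needed before factorization in $\bF[\Z^{2g}]$ even makes sense. The paper extracts the number of irreducible factors $\Omega(S)$ of the class, proves it is a diffeomorphism invariant of $(B^4,S)$, shows $\Omega(S(\gamma,C)) = \Omega(S) + \Irr(\Lef_z(C))$ with $\Omega(S)=0$ for quasipositive $S$, and realizes all values via $J_n = \#_n J$ for $J$ with irreducible Alexander polynomial. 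Your plan of choosing patterns with ``pairwise inequivalent'' scalars is the right instinct, but you must name an invariant of the scalar that survives both the $\GL_{2g}(\Z)$ action and the monomial ambiguity; the Alexander polynomial itself, or a specialization of it, does not.
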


Exotic pairs of orientable surfaces in other 4-manifolds are constructed using variations of the rim
surgery operation of Fintushel and Stern~\cite{rim}. For example, Finashin~\cite{Finashin} constructed
smoothly inequivalent surfaces in $\C P^2$,
which Kim and Ruberman proved to be topologically isotopic~\cite{Kim-Ruberman2}.
To show topological isotopy, previous constructions require the surface complement to be
simply-connected, or at least to have finite cyclic fundamental group; see
Kim and Ruberman~\cite{Kim-Ruberman2}\cite{Kim-Ruberman}. The surfaces
are distinguished using Seiberg--Witten invariants.
It is unclear if these methods give rise to exotic surfaces in $S^4$.

In Theorem~\ref{thm:topiso}, we provide a method for constructing surfaces
that are topologically isotopic (and are hence potentially exotic), where the
fundamental group of the surface complement is irrelevant. We use twist rim
surgery, introduced by Kim~\cite{Kim}. This combines the twist-spinning
construction of 2-knots, due to Zeeman~\cite{twisting}, with rim surgery.
Zeeman showed that a 1-twist-spun 2-knot is always smoothly trivial. This can
be rephrased as follows: 1-twist rim surgery on an unknotted 2-sphere in
$S^4$ is smoothly trivial. Building on this result, in
Section~\ref{sec:isotopy}, we show that, if $S$ is a surface in a 4-manifold,
and $\gamma \subset S$ is a simple closed curve that bounds a topologically
embedded disk $D$ in the complement of $S$, then the result $S'$ of 1-twist
rim surgery on $S$ along $\gamma$ is topologically isotopic to $S$. When $D$
is smooth, then $S'$ is smoothly isotopic to $S$. In fact, our result holds
for any \emph{concordance rim surgery} -- a generalization of twist rim
surgery that we introduce in Section~\ref{sec:rim} --
that gives the unknotted 2-sphere when performed
along the equator of $S^2 \subset S^4$. Note that Kim and
Ruberman~\cite[Corollary~4.6]{Kim-Ruberman2} gave a different proof
of Theorem~\ref{thm:topiso} in the case of 1-twist rim surgery that relies on
the 4-dimensional topological $s$-cobordism theorem, assuming the
fundamental group of the complement of $S$ is \emph{good} in the sense of Freedman,
and that the knot used for the rim surgery is slice.

Hence, if we find a pair $(S, \gamma)$, where $\gamma$ bounds a topological
disk, but not a smooth disk, in the complement of $S$, then we can construct
infinitely many potentially exotic copies of $S$. This is the case whenever
$S$ is a Seifert surface in $S^3$ pushed into $B^4$, and $\g \subset S$
comes from a non-separating, Alexander polynomial one knot on the Seifert surface
with trivial surface framing. We perform 1-twist rim surgery on this pair $(S,\gamma)$.
(Note that $\pi_1(B^4 \setminus S) \cong \Z$, since $S$ comes from $S^3$.)

What remains is to show that the resulting surfaces are not diffeomorphic.
For this, we use the cobordism maps induced by the surfaces on
perturbed link Floer homology. The effect of concordance rim surgery on these
maps follows from the work of the first and third
authors~\cite{JZConcordanceSurgery}; see Theorem~\ref{thm:concordance-rim-surgery}
for the precise formula.

To distinguish the maps, we need that
the map induced by $S$ is non-vanishing, that $\g$ is homologically
non-trivial on $S$, and that the pattern we use for the 1-twist rim surgery
has non-trivial Alexander polynomial; see Theorem~\ref{thm:not-diffeomorphic}.
We achieve the first condition by
finding a quasipositive $S$, and showing that such surfaces induce
non-vanishing maps on link Floer homology,
as they preserve the transverse invariant in knot Floer homology
defined by Lisca, Ozsv\'ath, Stipsicz, and Szab\'o~\cite{LOSS}, and extended by
Baldwin, Vela-Vick, and V\'ertesi~\cite{BVVVTransverse}
(usually referred to as the LOSS or BRAID invariant); see Corollary~\ref{cor:BRAID}.
For example, $S$ can be the standard genus one Seifert surface of a twice iterated,
positive, untwisted Whitehead double of any nontrivial, strongly quasipositive knot.

In fact, Corollary~\ref{cor:BRAID} is a special case of the following much more general result:

\begin{thm}\label{thm:ascending-Stein}
Suppose $(W,\cS) \colon (Y_0,\bL_0) \to (Y_1,\bL_1)$ is a decorated link
cobordism such that $W$ has a Weinstein structure $(W,\omega,\phi,V)$,
and $\cS=(S,\cA)$, where $S$ is an ascending surface with positive critical
points. If the decoration $\cA$ is $\ws$-anti-arboreal with respect to
$\phi$, then
\[
\left(F^\circ_{W,\cS}\right)^{\vee}\left(\tra^\circ(\bL_1)\right) = \tra^\circ(\bL_0)
\]
for $\circ\in \{\wedge,-\}$, where $\tra^\circ(\bL_i)$ is the transverse invariant
of the transverse link $\bL_i$ for $i \in \{0, 1\}$.
\end{thm}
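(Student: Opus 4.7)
The plan is to prove the theorem by decomposing the Weinstein cobordism into elementary pieces along the critical values of $\phi$, decomposing $S$ compatibly into elementary ascending pieces, and checking preservation of the transverse invariant on each piece. By a standard rearrangement argument for Weinstein cobordisms, after a generic perturbation of $\phi$ (keeping the Weinstein structure) we may assume the critical points of $\phi$ on $W$ and the critical points of $\phi|_S$ all lie in distinct level sets. Slicing $W$ between consecutive critical values yields a factorization $(W,\cS) = (W_n,\cS_n) \circ \cdots \circ (W_1,\cS_1)$ where each $(W_i,\cS_i)$ is of one of three types: (a) a product cobordism with the surface passing through a single critical point of $\phi|_S$; (b) a Weinstein 1-handle attachment (index $1$ critical point of $\phi$) with the surface either disjoint from the cocore or meeting it along an ascending arc; or (c) a Weinstein 2-handle attachment along a Legendrian knot with the surface either disjoint from the cocore disk or containing the cocore as part of an ascending saddle. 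By functoriality of the link cobordism maps, it suffices to verify that each elementary dual map carries $\tra^\circ$ to $\tra^\circ$.

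For pieces of type (a), a surface critical point of $\phi|_S$ being positive means it is an ascending birth/saddle/death in the sense of the $\ws$-region, and the $\ws$-anti-arboreal hypothesis on $\cA$ guarantees that the dividing set of the decoration sits on the correct side of the critical locus. This reduces the claim to the computation of the birth, band, and death maps in sutured/link Floer homology, whose effect on $\tra^\circ$ follows directly from its definition via open book decompositions together with the naturality results of Baldwin--Vela-Vick--Vértesi. For a type (b) piece, a Weinstein $1$-handle corresponds to a contact connected sum at the boundary, and the induced map on link Floer homology is the canonical connected-sum isomorphism; the transverse invariant is multiplicative under this operation and its dual preserves $\tra^\circ$ essentially tautologically. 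For a type (c) piece, a Weinstein $2$-handle corresponds to contact $(-1)$-surgery (equivalently Legendrian surgery) along an attaching circle; the fact that the contact element (and transverse invariant) is preserved under the dual of the two-handle cobordism map in this case is a theorem of Lisca--Ozsváth--Stipsicz--Szabó, generalized to the link setting by the work cited in the paper, and the ascending condition on $S$ ensures that when the cocore meets $S$ the band-like saddle is attached in the Legendrian/ascending position where these results apply.

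Composing these local statements along the handle decomposition gives $(F^\circ_{W,\cS})^\vee(\tra^\circ(\bL_1)) = \tra^\circ(\bL_0)$. The argument works uniformly for $\circ = \wedge$ and $\circ = -$, since both flavours of transverse invariant are defined through the same open book mechanism and both are compatible with the relevant handle maps, with the $\ws$-anti-arboreal condition ensuring the bookkeeping of basepoints and decorations matches in both settings.

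The main obstacle I expect is the type (c) step: checking that when an ascending saddle of $S$ sits inside a Weinstein $2$-handle, the resulting sutured/decorated cobordism map still acts as the Legendrian surgery map on the transverse invariant. This requires identifying the decorated surface inside the handle with an ascending Lagrangian cobordism realizing a transverse stabilization or band sum, and then invoking the invariance of $\tra^\circ$ under such moves. Getting the decoration $\cA$ to match the anti-arboreal model precisely, and verifying the sign/orientation conventions at the saddle, is where the bulk of the technical work will sit; once this local model is nailed down, the global statement follows by induction on the number of handles.
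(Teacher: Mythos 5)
Your overall strategy---decompose along critical values, reduce to elementary pieces, and check each dual map on $\tra^\circ$---is the same as the paper's. But there are several genuine gaps, and you have misplaced where the real work lies. First, the ``main obstacle'' you identify, namely an ascending saddle of $S$ sitting inside a Weinstein $2$-handle, does not occur: by definition an ascending surface contains no critical points of $\phi$, so $S$ cannot contain the cocore of a Weinstein handle, and near a handle attachment the surface is just a product over a transverse link disjoint from the attaching Legendrian. The handle cases therefore reduce to the Ozsv\'ath--Szab\'o argument for the contact class under Stein $1$- and $2$-handles, with $S$ inert; you should also note that Lemma~\ref{lem:ascending-surface} rules out index~$2$ critical points of $\phi|_S$ (they would be negative), so no death of $S$ itself ever appears.

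The pieces you treat as routine are where the content actually is. (i) For a positive index~$1$ critical point of $\phi|_S$, one needs that it corresponds to a \emph{positive} half-twist of the monodromy of a pointed open book (Lemma~\ref{lem:ascending-index-1-half-twist}) and then an explicit model holomorphic-triangle count showing the dual saddle map carries $\xs(\cO_1)$ to $\xs(\cO_0)$ (Lemma~\ref{lem:saddles}); asserting that this ``follows directly from the definition'' skips the computation. (ii) The critical points of $\phi|_{\cA}$ are a separate class of elementary cobordisms: they change the number of basepoints and are computed by the quasi-stabilization maps $S^{\pm}_{w,z}$, $T^{\pm}_{w,z}$ (Lemma~\ref{lem:quasi-stabilization}); the $\ws$-anti-arboreal hypothesis is exactly what restricts these to the types $S^+$ and $T^-$ whose duals preserve $\tra^\circ$, and this cannot be dismissed as bookkeeping. (iii) Most seriously, the naturality you invoke from Baldwin--Vela-Vick--V\'ertesi only identifies the invariant up to an automorphism of link Floer homology, which is not enough to compose elementary maps and land on a well-defined class; the paper must first upgrade this to Proposition~\ref{prop:naturality} via the partial open book and bypass machinery of Section~\ref{sec:partial-open-books}. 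Without that strengthening, your induction over handles does not close up.
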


See Section~\ref{sec:constructible} for some background on ascending surfaces in
Weinstein cobordisms, and Definition~\ref{def:arboreal-decoration} for $\ws$-anti-arboreal
decorations. We prove Theorem~\ref{thm:ascending-Stein} in Section~\ref{sec:functoriality}.
Theorem~\ref{thm:ascending-Stein} fits into the context of a family of similar results due to
Ozsv\'ath and Szab\'o~\cite{OSContactStructures}*{Theorem~1.5},
the first author~\cite{JCob}*{Theorem~11.24}, Baldwin and Sivek~\cite{BaldwinSivekEquivalence}*{Theorem~1.10} \cite{BaldwinSivek-LegendrianMonopole}*{Theorem~1.2}, Golla and the first author~\cite{Golla-Juhasz},
Baldwin, Lidman, and Wong~\cite{BLWLegendrian}*{Theorem~1.5}, and Kang~\cite{Kang-Naturality}\cite{Kang-Transverse}.

To distinguish the surfaces up to diffeomorphism using the knot Floer cobordism maps,
we define an invariant $\Omega(S) \in \Z^{\ge 0} \cup \{-\infty\}$ for any orientable surface of genus $g > 0$
bounding a knot $K$ in $S^3$. We prove that $\Omega(S)$ is an invariant of the diffeomorphism type
of the pair $(B^4,S)$ in Theorem~\ref{thm:diffeomorphism-invariant}. Furthermore, if $S$ is a surface bounding $K$,
and $S'$ is obtained by 1-twist rim surgery using an auxiliary knot $J$ in $S^3$, we prove that
\[
\Omega(S') = \Omega(S) + \Irr(\Delta_J(t)),
\]
where $\Irr(\Delta_J(t))$ is the number of irreducible factors of the Alexander polynomial of $J$.
Finally, if $S$ is a quasipositive surface in $B^4$, we prove that
\[
\Omega(S) = 0;
\]
see Proposition~\ref{prop:ascending-perturbed-coefficients}.
In fact, an algebraic argument of Sunukjian~\cite[Theorem~2]{Sunukjian-surgery}, 
combined with Theorem~\ref{thm:ascending-Stein}, implies that $S$ and $S'$
are never diffeomorphic whenever $S$ is a quasipositive surface and
$\Delta_J(t) \neq 1$, though this is not immediately sufficient for Theorem~\ref{thm:main}.

Instead of the link cobordism maps, one could distinguish the surfaces
we construct up to diffeomorphism \emph{fixing $S^3$ pointwise} using Seiberg--Witten theory,
as follows: By Rudolph~\cite{Rudolph-algebraic}, every quasipositive surface $S$ in $B^4$
is algebraic. Hence, we can consider the projectivization $\hat{S}$ of $S$ in $\CP^2$, and perturb it to be
nonsingular. Then one can apply the result of Kim~\cite[Theorem~3.4]{Kim-Hee-Jung}
to compute the effect of twist rim surgery on the relative Seiberg--Witten invariant $\text{SW}_{\CP^2, \hat{S}}$
of the complement of $\hat{S}$. Note that this method does not extend to general concordance
surgeries, due to the lack of a concordance surgery formula on the Seiberg--Witten side.

It is a natural question whether one can use Theorem~\ref{thm:main} to obtain
an exotic pair of orientable \emph{closed} surfaces in $S^4$.
A natural candidate would be obtained by taking the surfaces $S$ and $S'$,
constructed in Theorem~\ref{thm:main}, and considering $S \cup -S'\subset S^4$.
However, it is a straightforward exercise to see that this surface is smoothly unknotted.

\subsection{Acknowledgements}

We would like to thank Peter Feller for his invaluable guidance
on quasipositive surfaces. We would also like to thank
Anthony Conway, Robert Gompf, Kyle Hayden, Hee Jun Kim, Anubhav Mukherjee, Mark Powell,
and Daniel Ruberman for helpful discussions.

The first author was supported by a Royal Society Research Fellowship,
the second author by an NSF Graduate Research Fellowship (DGE-1656466),
and the third author by an NSF Postdoctoral Research Fellowship (DMS-1703685).
This project has received funding from the European Research Council (ERC)
under the European Union's Horizon 2020 research and innovation programme
(grant agreement No 674978).

\section{Concordance rim surgery and topological isotopy}

\subsection{Concordance rim surgery}
\label{sec:rim}

Concordance surgery is a generalization of knot surgery introduced by
Fintushel and Stern; see Akbulut~\cite{AkbulutGokova} and the work
of the first and third authors~\cite{JZConcordanceSurgery}.
In this section, we present a generalization of rim surgery,
based on concordance surgery.

Suppose $S$ is a properly embedded, smooth, oriented surface in the smooth, compact 4-manifold $W$,
and $\g$ is a simple closed curve in $S$. We identify neighborhood of $\g$ in $(W,S)$
with $(S^1 \times B^3, S^1 \times a)$, where $a \subset B^3$ denotes
a properly embedded, unknotted arc.

\begin{define}\label{def:rim1}
Suppose $S$ is a properly embedded, smooth, oriented surface in the compact 4-manifold $W$,
and $\g \subset S$ is a simple closed curve. Furthermore, let $(I \times S^3, C)$ be a
self-concordance of a knot $K \subset S^3$. We assume that there is a neighborhood
$N(p) \subset S^3$ of a point $p \in K$ such that the arc $N(p) \cap K$ is unknotted, and
\[
(I \times N(p)) \cap C = I \times (N(p) \cap K).
\]

If we glue $\{0\} \times S^3$ and $\{1\} \times S^3$ using the equivalence
relation $(0,x) \sim (1,x)$ for $x \in S^3$, then we obtain the annulus
$A := (C \setminus (I \times N(p)))/{\sim}$ in
$S^1 \times B^3 \approx I \times (S^3 \setminus N(p))/{\sim}$.
Let $N(\g)$ be a regular neighborhood of $\g$ in $W$ such that $S \cap N(\g)$
is an annulus, and choose an orientation-preserving diffeomorphism
\[
d \colon (S^1 \times S^2, \d A) \to (\d N(\g), S \cap \d N(\g)),
\]
the isotopy class of which is determined by a normal framing of $S$ along $\g$.

The \emph{concordance rim surgery} $S(\g, C)$ of $S$ along $\g$ with pattern $C$ is obtained
by gluing $(S^1 \times B^3, A)$ to $(W \setminus N(\g), S \setminus N(\g))$ using the
diffeomorphism $d$ (which we suppress in our notation).
For an illustration, see Figure~\ref{fig:concordance-surgery}.
\end{define}

\begin{figure}[ht!]
\centering
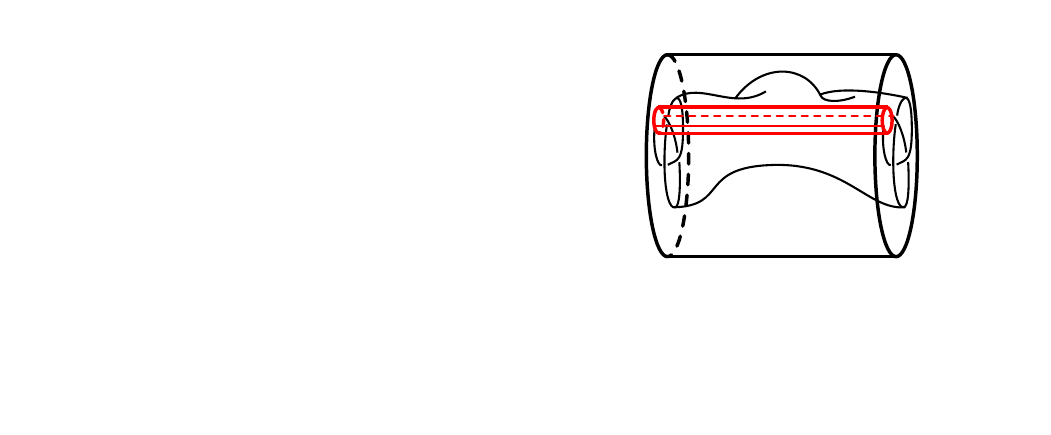
\caption{Concordance rim surgery on a surface $S$ along the curve $\gamma$ with pattern
a self-concordance $C$ of a knot $K$.}
\label{fig:concordance-surgery}
\end{figure}

\begin{rem}\label{rem:rim}
  Alternatively, one could define concordance rim surgery as follows:
  Let $S$ be a properly embedded, smooth, oriented surface in the smooth 4-manifold $W$,
  together with a simple closed
  curve $\gamma \subset S$. We denote by $T$ the \emph{rim torus} around $\gamma$, which
  is the union of the fibers of the unit normal circle bundle of $S$ in $X$ over $\gamma$.
  Furthermore, let $(I \times S^3, C)$ be a self-concordance of a knot $K$ in $S^3$.
  Then we could define $S(\gamma, C)$
  as the surface obtained by doing concordance surgery along the rim torus $T$
  using the self-concordance $C$.

  Kim~\cite[Lemma~2.4]{Kim} showed the equivalence of the two definitions
  in the special case of twist rim surgery, which we review in
  the next section. We do not use this equivalence when proving our results,
  so we do not study the relationship between the two possible definitions of concordance
  rim surgery in general.
\end{rem}

\subsection{Topological isotopy}
\label{sec:isotopy}

In this section, we show that 1-twist rim surgery along a curve
that bounds a topological disk in the surface complement preserves the
topological isotopy type of the surface. We first review the definition
of 1-twist rim surgery, due to Kim~\cite{Kim}.

\begin{define}\label{def:1-twist}
Let $K$ be an oriented knot in $S^3$, and let $T = \partial N(K)$ be
the boundary of a regular neighborhood $N(K)$ of $K$.
Fix an identification $T \approx S^1 \times S^1$ such that $S^1 \times \{\varphi\}$
is a meridian of $K$ for $\varphi \in S^1$. Let $V \approx T \times I$ be a regular neighborhood of $T$
in $S^3 \setminus K$ such that $T \times \{0\} \subset N(K)$.
Furthermore, choose a smooth monotonic function $f \colon \R \to I$
such that $f(t) = 0$ for $t \le 0$ and $f(t) = 1$ for $t \ge 1$.
We define an automorphism $\phi$ of $(S^3, K)$ by
\[
\phi(x) =
\begin{cases}
x & \text{if } x \not\in V, \\
(\theta + 2\pi f(t), \varphi, t) & \text{if } x = (\theta, \varphi, t) \in V \approx S^1 \times S^1 \times I.
\end{cases}
\]
For $n \in \Z$, the \emph{$n$-twist self-concordance} of $K$ is given by the tuple
\[
(I \times S^3, I \times K, h_0, h_1),
\]
where $h_i \colon \{i\} \times S^3 \to S^3$ are given by $h_0(0, x) = x$
and $h_1(1, x) = \phi^n(x)$.
We denote the 1-twist self-concordance of $K$ by $(I \times S^3, K_{\tw})$.

Let $S$ be a properly embedded, smooth, oriented surface in the compact 4-manifold $W$,
and $\g \subset S$ a simple closed curve, together with a normal framing of $S$ along $\g$.
Then we call $S(\g, K_{\tw})$ the \emph{1-twist rim surgery} of $S$ along $\g$ with pattern $K$.
\end{define}

When $K$ is nontrivial, the knot cobordisms $(I \times S^3, K_{\tw})$ and $(I \times S^3, I \times K)$
are inequivalent, which follows from the work of Zeeman~\cite{twisting}.

\begin{rem}\label{rem:zeeman}
The 1-twist self-concordance first appeared in the work of Zeeman~\cite{twisting}
in slightly different language. If $S^2$ is the standard 2-sphere in $S^4$ with equator $S^1$,
and $K$ is a knot in $S^3$, then $S^2(S^1, K_{\tw})$ is the 1-twist-spun 2-knot obtained from $K$
in the terminology of Zeeman. Here, we use the normal framing of $S^2$ along $S^1$
induced by $D^2$.

In particular, Zeeman showed that, if $S$ is a 2-sphere embedded in the 4-manifold $W$
that bounds a locally flat, topologically (resp.~smoothly) embedded ball,
and $K$ is a knot in $S^3$, then the 2-sphere $S(\gamma, K_{\tw})$ also bounds
a locally flat, topologically (resp.~smoothly) embedded ball in $W$.
\end{rem}

\begin{thm}\label{thm:topiso}
Let $S$ be a properly embedded, oriented surface in a 4-manifold $W$.
Suppose $\gamma \subset S$ a simple closed curve that bounds a locally flat,
topologically embedded disk $D$ in $W$, such that $\Int(D) \cap S = \emptyset$.
Furthermore, let $C$ be a self-concordance of a knot in $S^3$ such that
the concordance rim surgered 2-sphere $S^2(S^1, C)$ is topologically unknotted in $S^4$.
Then $S(\gamma, C)$, the result of concordance rim surgery on $S$ along $\gamma$ with pattern $C$
and normal framing of $S$ along $\g$ given by $D$, is topologically isotopic to $S$.
\end{thm}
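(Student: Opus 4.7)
The plan is to realize the concordance rim surgery as a modification supported in a topologically standard local model, embedded into $(W,S)$ using the disk $D$, and then to use the hypothesis on $C$ to trivialize the modification inside that model. First, using that $D$ is locally flat in $W$ with $\partial D = \gamma \subset S$ and $\Int(D) \cap S = \emptyset$, together with the normal framing of $S$ along $\gamma$ that $D$ determines, I would produce a locally flat topological embedding of pairs
\[
\iota \colon (B^4, A_{\mathrm{std}}) \hookrightarrow (W, S),
\]
where $A_{\mathrm{std}}$ is a standard annulus in $B^4$ whose core bounds a locally flat disk in $B^4$ disjoint from the interior of $A_{\mathrm{std}}$; equivalently, writing $S^4 = B^4_+ \cup B^4_-$, the pair $(B^4_+, A_{\mathrm{std}})$ is the intersection of one hemisphere of $S^4$ with the unknotted $\Sigma_{\mathrm{std}} \subset S^4$. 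The image of $\iota$ is chosen so as to contain $D$, a collar of $\gamma$ in $S$, and the rim surgery neighborhood $N(\gamma) = S^1 \times B^3$, with the framing of the model matched to the framing induced by $D$.

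Since concordance rim surgery is supported in $N(\gamma) \subset \iota(B^4)$, the surgered surface $S(\gamma,C)$ agrees with $S$ outside $\iota(B^4)$ and restricts inside to a new annulus $A_{\mathrm{std}}'$ with $\partial A_{\mathrm{std}}' = \partial A_{\mathrm{std}}$; hence it suffices to produce a topological ambient isotopy of $B^4$ that is the identity on $\partial B^4$ and carries $A_{\mathrm{std}}$ to $A_{\mathrm{std}}'$. Doubling $(B^4, A_{\mathrm{std}}')$ across $\partial B^4$ with the same two caps in $B^4_-$ that complete $A_{\mathrm{std}}$ to $\Sigma_{\mathrm{std}}$ produces a 2-sphere $\Sigma_{\mathrm{std}}' \subset S^4$, which by construction is exactly the concordance rim surgery of $\Sigma_{\mathrm{std}}$ along its equator. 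The hypothesis that $S^2(S^1,C)$ is topologically unknotted then delivers an ambient topological isotopy $\Phi_t$ of $S^4$ with $\Phi_1(\Sigma_{\mathrm{std}}) = \Sigma_{\mathrm{std}}'$.

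The remaining step, which I expect to be the main obstacle, is to arrange $\Phi_t$ to be the identity on $B^4_-$. I would modify $\Phi_t$ by precomposing with an ambient isotopy supported in a neighborhood of $B^4_-$ that undoes the motion of the two caps, using an isotopy-extension argument in the complement of $\Sigma_{\mathrm{std}}$ in $S^4$, whose fundamental group is $\Z$ so that no high-tech surgery theory is needed to carry out such corrections in the topological category. Once the isotopy has been localized to $B^4_+$, restricting to $B^4$ yields the required isotopy of $A_{\mathrm{std}}$ to $A_{\mathrm{std}}'$ rel $\partial B^4$; pushing this forward through $\iota$ and extending by the identity on $W \setminus \iota(B^4)$ completes a topological isotopy from $S$ to $S(\gamma,C)$ in $W$. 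The construction of $\iota$ in the first step is essentially a bookkeeping exercise once the framing condition is tracked carefully, so the real content lies in this final localization.
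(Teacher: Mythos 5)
Your reduction to a standard local model $(B^4, A_{\mathrm{std}})$ via $D$ and the observation that doubling $A'_{\mathrm{std}}$ with the standard caps yields $S^2(S^1,C)$ are both fine (modulo the framing bookkeeping you flag). The genuine gap is exactly at the step you call the main obstacle, and the fix you sketch does not close it. The hypothesis gives only the \emph{absolute} statement that $\Sigma'_{\mathrm{std}}$ is unknotted, i.e.\ some ambient isotopy $\Phi_t$ of $S^4$ carries $\Sigma_{\mathrm{std}}$ to $\Sigma'_{\mathrm{std}}$; what you need is the \emph{relative} statement that $A_{\mathrm{std}}$ and $A'_{\mathrm{std}}$ are isotopic rel $\partial B^4_+$. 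Isotopy extension (Edwards--Kirby) lets you extend an isotopy of a submanifold to an ambient one, but it does not produce the isotopy you need: you must first show that the embedded ball pair $\Phi_1(B^4_-,\mathrm{caps})$ can be isotoped back to the standard $(B^4_-,\mathrm{caps})$ inside $(S^4,\Sigma'_{\mathrm{std}})$, and then kill the residual self-homeomorphism of $(B^4_-,\mathrm{caps})$ --- a uniqueness-of-ball-pairs statement that is not supplied by any $\pi_1$ computation. The remark that $\pi_1(S^4\setminus\Sigma_{\mathrm{std}})\cong\Z$ is a red herring here: it neither gives injectivity of $\pi_1(B^4_+\setminus A'_{\mathrm{std}})\to\pi_1(S^4\setminus\Sigma'_{\mathrm{std}})$ nor addresses the ball-uniqueness issue. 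In general, ``the double is unknotted'' does not formally imply ``the halves are isotopic rel boundary,'' so as written the proof is incomplete at its crucial step.

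The paper's argument is arranged precisely to avoid ever needing a relative sphere isotopy. It compresses $S$ along $D$ to a surface $S'$, exhibits $S$ as a single tube (3-dimensional 1-handle with core $\eta$) attached to $S'$ inside a ball $B$, and exhibits $S(\gamma,C)$ as $S'\sqcup U$ joined by two tubes, where $U\cong S^2(S^1,C)$ is pushed into $B$. Unknottedness is used only to produce a 3-ball $\Delta$ with $\partial\Delta=U$; then $\pi_1(B\setminus\Delta)=1$ lets one slide the tube cores off $\Delta$, cancel $U$ against $\Delta$ to get a single tube with core $\eta'\subset B$, and conclude because any two framed arcs in the ball $B$ are isotopic rel boundary in $W\setminus S'$. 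If you want to salvage your approach, you would either need to prove the relative unknotting of the annuli directly, or convert your local model into the paper's ``compress, absorb the unknotted sphere along a 3-ball, compare arcs'' scheme.
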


\begin{proof}
We understand everything in this proof to take place in the topological,
locally flat category. We use the existence and uniqueness of normal bundles up
to ambient isotopy, and the existence of handle
decompositions in this category, for which we refer the reader to Chapter~9
of the book of Freedman and Quinn~\cite{FQ}.

Let $D \times I$ be a thickening of $D$, such that $\partial D \times I \subset S$.
Let $S'$ be the surface obtained by smoothing the corners of
$(S \setminus (\d D \times I)) \cup (D \times \d I)$.
Choose a closed 4-ball $B$ containing $D \times I$ with
$S' \subset W \setminus \Int(B)$ and $S' \cap \partial B =
D \times \d I$; see the left and middle of Figure~\ref{fig:topiso}.

Note $S$ is obtained from $S'$ by surgery along a 3-dimensional 1-handle with
core arc $\eta = \{c\} \times I$, where $c$ is the center point of the disk $D$.
The framing on $\gamma$ is
determined by specifying that the result $S$ of this surgery is orientable.

Let $U$ be the (smoothed) symmetric difference of $S(\gamma, C)$ and $S'$,
and push $U$ off of $S'$ and into $B$. By construction, the pair $(B, U)$
is homeomorphic to $(2B^4, S^2(S^1, C))$, where $2 B^4 \subset \R^4$ is a ball of radius~2. By
assumption, this implies that $U$ is an unknotted 2-sphere bounding a 3-ball
$\Delta$ in $B$. Now $S(\gamma, C)$ is obtained from $S' \cup U$ by surgery
along two 3-dimensional 1-handles with core arcs $\eta_1$, $\eta_2 \subset B$,
where $\eta_1 = \{c\} \times [0,1/4]$ and $\eta_2 = \{c\} \times [3/4,1]$. There are two
choices of framings for $\eta_1$; the choice will determine the framing for
$\eta_2$; see the right of Figure~\ref{fig:topiso}.

\begin{figure}[ht!]
\centering
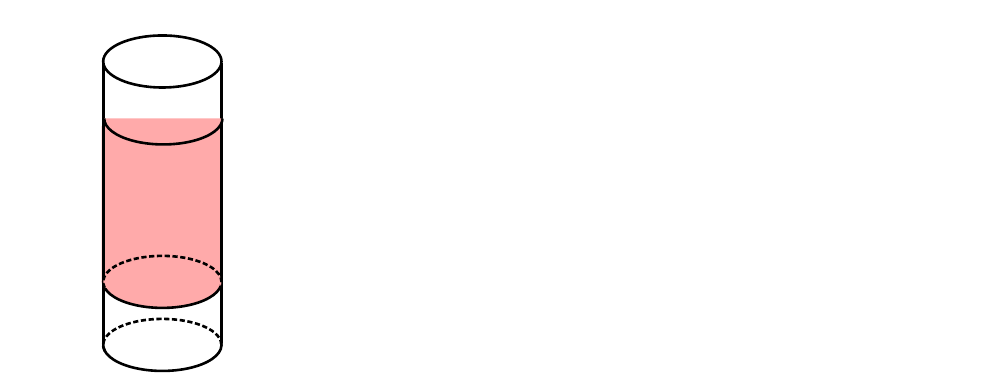
\caption{Left: A thickening of the locally flat disk $D$. The surface $S'$ is
obtained from $S$ by compressing along $D$. Middle: We obtain $S$ from $S'$
by surgery along a 3-dimensional 1-handle with core $\eta$. Right: We obtain
$S(\gamma,C)$ from $S' \sqcup U$ by surgery along two 3-dimensional 1-handles
with cores $\eta_1$, $\eta_2$. The sphere $U$ is the symmetric difference of
$S(\gamma,C)$ and $S'$, pushed off of $S'$. Then $U$ is the result of
concordance surgery on an unknotted sphere with pattern $C$,
which is unknotted, by assumption.}
\label{fig:topiso}
\end{figure}

Since $\pi_1(B \setminus \Delta) \cong 1$, we can isotope each $\eta_i$ rel
boundary in $B$ to be disjoint from $\Delta$. By concatenating the
3-dimensional 1-handles about $\eta_1$ and $\eta_2$ with $\Delta$, we find that
$S(\gamma,C)$ is obtained from $S'$ by surgery along a 3-dimensional 1-handle
with core arc $\eta' \subset B$ (and with framing determined by the
orientability of $S(\gamma,C)$). Since $\eta$ and $\eta'$ are both contained
in the ball $B$, we conclude that $\eta$ and $\eta'$ are isotopic rel
boundary as framed arcs in $W \setminus S'$, and therefore that $S$ and
$S(\gamma,C)$ are isotopic.
\end{proof}

\begin{rem}
In Theorem~\ref{thm:topiso}, if we take $D$ to be smoothly embedded, then by the same argument in the
smooth category we find that $S(\gamma,C)$ is smoothly isotopic to $S$.

When the disk $D$ is topologically but not smoothly embedded, we cannot
conclude that $S(\gamma, C)$ is \emph{smoothly} obtained from $S'$ by
surgery along a 3-dimensional 1-handle. Thus, the argument fails smoothly,
and we do not necessarily expect that $S(\gamma, C)$ and $S$ are smoothly isotopic
in general.
\end{rem}

The most well-known situation in which concordance surgery on an unknotted sphere yields
the unknot is the case of 1-twist rim surgery \cite{twisting}.

\begin{cor}
Let $S$ be a properly embedded, oriented surface in a 4-manifold $W$, and $\gamma \subset S$
a simple closed curve. Suppose $\gamma$ bounds a locally flat, topologically
embedded disk $D$ in $W$, such that $\Int(D) \cap S = \emptyset$. Then $S(\gamma, K_{\tw})$
is topologically isotopic to $S$ for any knot $K$ in $S^3$,
where the normal framing of $S$ along $\g$ is given by $D$.	
\end{cor}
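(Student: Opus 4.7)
The plan is to deduce this directly from Theorem~\ref{thm:topiso} by verifying its only nontrivial hypothesis for the specific pattern $C = K_{\tw}$, namely that the concordance rim surgery $S^2(S^1, K_{\tw})$ on the standard 2-sphere $S^2 \subset S^4$ along its equator $S^1$ (framed by the standard disk $D^2$) is topologically unknotted. Once this is established, Theorem~\ref{thm:topiso} applied to $(S,\gamma,D)$ with $C = K_{\tw}$ immediately yields that $S(\gamma, K_{\tw})$ is topologically isotopic to $S$, which is the statement of the corollary.

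To verify the unknottedness hypothesis, I would appeal to the classical identification recorded in Remark~\ref{rem:zeeman}: with the framing of $S^2$ along $S^1$ induced by $D^2$, the concordance rim surgered 2-sphere $S^2(S^1, K_{\tw})$ is precisely the 1-twist-spun 2-knot of $K$ in the sense of Zeeman~\cite{twisting}. Zeeman's theorem asserts that the 1-twist-spin of any classical knot $K \subset S^3$ is smoothly (and hence topologically, locally flatly) unknotted in $S^4$. This supplies exactly the input required by Theorem~\ref{thm:topiso}.

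There is no real obstacle here: the content is entirely contained in Zeeman's theorem and in the previously proved Theorem~\ref{thm:topiso}, and the only thing to check is that the framing conventions match. The framing used in Definition~\ref{def:1-twist} for the 1-twist self-concordance and the framing specified by $D$ in the statement of Theorem~\ref{thm:topiso} are both the disk-induced framings, so the identification with Zeeman's construction applies verbatim and the proof consists of combining these two results.
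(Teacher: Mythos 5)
Your proposal is correct and is exactly the argument the paper intends: the corollary is stated immediately after the observation that 1-twist rim surgery on the unknotted sphere yields the unknot by Zeeman's theorem (cf.\ Remark~\ref{rem:zeeman}), and the conclusion then follows by applying Theorem~\ref{thm:topiso} with $C = K_{\tw}$. Your attention to matching the disk-induced framing conventions is the only point requiring care, and you have handled it correctly.
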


There are other concordance surgeries on the unknotted 2-sphere that
yield the unknot; e.g., $n$-twist 1-roll rim surgery with pattern $K$,
assuming $S^3_n(K)$ is a lens space; see Teragaito~\cite{Teragaito}
and Litherland \cite{Litherland}. For our
main examples, we will primarily consider 1-twist rim surgery.

\begin{example}
  Let $S$ be a quasipositive Seifert surface of a knot $J$ in $S^3$,
  and $\g$ a homologically nontrivial simple closed curve on $S$ with
  trivial Alexander polynomial and trivial surface framing.
  Then $\g$ bounds a topological disk $D$ in $B^4$ that only intersects $S$
  in $\g$. If we perform surgery along $D$, we obtain a topological surface
  $S'$ bounding $J$ of smaller genus. Quasipositivity of $S$ implies it minimizes the slice genus.
  Hence, the topological slice genus of $J$ is
  less than its smooth slice genus, and the curve $\g$ does not bound
  a smooth disk in the complement of $S$. Compare Rudolph's counterexample to the locally flat Thom conjecture \cite{RudolphSlice}*{Section~5}.

  In particular, we can apply Theorem~\ref{thm:topiso} to the pair $(S, \g)$
  to see that the 1-twist rim surgery $S(\g, K_{\tw})$ is topologically isotopic to $S$,
  for any knot $K$ in $S^3$.
  Since $\g$ does not bound a smooth disk in the complement of $S$, it is possible
  that $S$ and $S(\g, K_{\tw})$ are not smoothly isotopic. In the rest of the paper,
  we will prove that this is indeed the case whenever $\Delta_K(t) \neq 1$,
  using perturbed cobordism maps on link Floer homology.
\end{example}

\section{Link Floer homology}
\label{sec:backgroundHFK}

In this section, we review some background on the knot and link Floer
homology groups of Ozsv\'{a}th and Szab\'{o} \cite{OSKnots} \cite{OSLinks},
as well as the constructions of link cobordism maps due to the first and third
authors \cite{JCob} \cite{ZemCFLTQFT}.

\subsection{The link Floer homology groups}

If $(\Sigma,\as,\bs,\ws,\zs)$ is a Heegaard diagram for a multi-pointed link $(Y,\bL)$,
then there are link Floer homology groups
\begin{equation}
\HFLh(\Sigma,\as,\bs,\ws,\zs) \quad\text{and} \quad \HFL^-(\Sigma,\as,\bs,\ws,\zs).
\end{equation}
These are homologies of chain complexes $\CFLh(\Sigma,\as,\bs,\ws,\zs)$ and
$\CFL^-(\Sigma,\as,\bs,\ws,\zs)$, respectively, defined as follows.

Let $\bF$ be the field of two elements, and $\bF[v]$ the polynomial ring in the variable $v$.
We consider the two  half-dimensional tori
\[
\bT_\a:=\a_1\times \cdots \times \a_n\quad \text{and} \quad \bT_{\b}:=\b_1\times \cdots \times \b_n
\]
in $\Sym^n(\Sigma)$, where $n=g(\Sigma)+|\ws|-1$.
Let $\CFLh(\Sigma,\as,\bs,\ws,\zs)$ be the $\bF$-vector space generated by
intersection points $\xs \in \bT_{\a}\cap \bT_{\b}$. Furthermore, let
$\CFL^-(\Sigma,\as,\bs,\ws,\zs)$ be the free $\bF[v]$-module generated by
intersection points $\xs \in \bT_{\a}\cap \bT_{\b}$.

The differential on $\CFLh(\Sigma,\as,\bs,\ws,\zs)$ is given by the formula
\[
\d \xs = \sum_{\ys\in \bT_{\a}\cap \bT_{\b}} \sum_{\substack{\phi\in \pi_2(\xs,\ys)\\
\mu(\phi) = 1\\ n_{\ws}(\phi)=n_{\zs}(\phi)=0}} \# \left(\cM(\phi)/\R\right) \cdot \ys,
\]
where $\cM(\phi)$ denotes the moduli space of pseudo-holomorphic disks in $\Sym^n(\Sigma)$
representing the class $\phi$.
The differential on $\CFL^-(\Sigma,\as,\bs,\ws,\zs)$ is given by the formula
\[
\d \xs = \sum_{\ys\in \bT_{\a}\cap \bT_{\b}} \sum_{\substack{\phi\in \pi_2(\xs,\ys)\\
\mu(\phi) = 1 \\ n_{\ws}(\phi)=0}} \# \left(\cM(\phi)/\R \right) v^{n_{\zs}(\phi)}\cdot \ys,
\]
extended equivariantly over the action of $v$.

The chain complexes $\CFLh(\Sigma,\as,\bs,\ws,\zs)$ and
$\CFL^-(\Sigma,\as,\bs,\ws,\zs)$ have refinements over $\Spin^c$ structures.
For $\frs\in \Spin^c(Y)$, we define $\CFLh(\Sigma,\as,\bs,\ws,\zs, \frs)$
as the submodule generated by intersection
points $\xs$ which satisfy $\frs_{\ws}(\xs) = \frs$, and similarly for $\CFL^-$.
When $c_1(\frs)$ is torsion, and the link $\bL$ is null-homologous, there are two
Maslov gradings, which we denote by $\gr_{\ws}$ and $\gr_{\zs}$. With respect to $(\gr_{\ws}, \gr_{\zs})$,
the differential has bigrading $(-1,-1)$, and the action of $v$ has bigrading $(0,-2)$.

\subsection{Link cobordism maps}

In this section, we summarize the functorial properties of link Floer homology,
due to the first and third authors \cite{JCob} \cite{ZemCFLTQFT}.

\begin{define}
 Suppose that $(Y_0,\bL_0)$ and $(Y_1,\bL_1)$ are two multi-pointed links.
 Write $\bL_i=(L_i,\ws_i,\zs_i)$ for $i \in \{0, 1\}$. A \emph{decorated link cobordism} from $(Y_0,\bL_0)$
 to $(Y_1,\bL_1)$ consists of a pair $(W,\cS)$, satisfying the following:
\begin{enumerate}
\item $W$ is a compact, oriented 4-manifold with $\d W=-Y_0\cup Y_1$.
\item $\cS=(S,\cA)$, where $S$ is a compact, oriented, and properly embedded surface in $W$, such that $\d S=-L_0\cup L_1$.
\item $\cA$ is a set of properly embedded arcs on $S$,
    whose endpoints are disjoint from $\ws_i$ and $ \zs_i$ for $i\in \{0,1\}$,
    and such that each component of $L_i\setminus (\ws_i\cup \zs_i)$
    contains exactly one endpoint of $\d \cA$. Furthermore,
    $S\setminus \cA$ consists of two subsurfaces, denoted $\cS_{\ws}$ and $\cS_{\zs}$, that meet along $\cA$, such that $\ws_i\subset \cS_{\ws}$ and $\zs_i\subset \cS_{\zs}$.
\item Each component of $W$ contains a component of $S$, and
    each component of $S$ contains a component of $\cA$.
\end{enumerate}
\end{define}

For a decorated link cobordism $(W,\cS)\colon (Y_0,\bL_0)\to (Y_1,\bL_1)$,
the first author \cite{JCob} constructed a cobordism map
\[
\hat{F}_{W,\cS}\colon \HFLh(Y_0,\bL_0)\to \HFLh(Y_1,\bL_1).
\]
The third author \cite{ZemCFLTQFT} subsequently constructed $\bF[v]$-equivariant maps
\[
F^-_{W,\cS}\colon \HFL^-(Y_0,\bL_0)\to \HFL^-(Y_1,\bL_1).
\]
The construction in \cite{ZemCFLTQFT} also induces a cobordism map on $\HFLh$, which is of a different flavor than the one in \cite{JCob}.
The first and third authors \cite{JuhaszZemkeContactHandles}*{Theorem~1.4}
proved that the two constructions give the same maps on $\HFLh$.

\subsection{Duality and link Floer homology}

We now recall some basic results about duality and link Floer homology, which
feature prominently in the functorial properties of the transverse
invariants. Firstly, if $\cH=(\Sigma,\as,\bs,\ws,\zs)$ is a Heegaard link
diagram for $(Y,\bL)$, then $\cH^\vee=(\Sigma,\bs,\as,\ws,\zs)$ is a
diagram for $(-Y,-\bL)$, where $-\bL=(-L,\ws,\zs)$ denotes
$\bL$, with orientation reversed.

There is a canonical isomorphism
\[
\CFLh(\Sigma,\bs,\as,\ws,\zs)\iso \left(\CFLh(\Sigma,\as,\bs,\ws,\zs)\right)^\vee,
\]
where $\vee$ denotes duality of $\bF$-vector spaces; see
\cite{OSKnots}*{Proposition~3.7}. Similarly, the chain complexes $\CFL^-(\Sigma,\bs,\as,\ws,\zs)$
and $\CFL^-(\Sigma,\as,\bs,\ws,\zs)^\vee$ are canonically isomorphic, where
$\vee$ now denotes the dual as a chain complex over $\bF[v]$.

The link cobordism maps also satisfy an analogous duality property.
If $(W,\cS)\colon (Y_0,\bL_0)\to (Y_1,\bL_1)$ is a decorated link cobordism,
then turning around $(W,\cS)$ gives a link cobordism
\[
(W,\cS)^\vee:=(W^\vee, \cS^\vee)\colon (-Y_1,-\bL_1)\to (-Y_0,-\bL_0).
\]
Furthermore, using the description of the link cobordism maps in terms
of elementary cobordisms from \cite{ZemCFLTQFT},
it is straightforward to adapt \cite{OSTriangles}*{Theorem~3.5} to obtain that
\begin{equation}
F^\circ_{(W,\cS)^\vee}=(F_{W,\cS}^\circ)^{\vee}, \label{eq:duality-maps}
\end{equation}
for $\circ\in \{\wedge,-\}$.

\section{Perturbed sutured Floer homology}
\label{sec:totally-twisted}

\emph{Sutured Floer homology} is an invariant of sutured manifolds, due to the first author~\cite{JDisks}.
\emph{Perturbed sutured Floer homology}, introduced by the first and third authors~\cite{JZConcordanceSurgery}, is a refinement for sutured manifolds equipped with a collection of closed
2-forms.

Sutured Floer homology perturbed by $n$ closed 2-forms
has coefficients in the group ring $\bF[\R^n]$, which we note contains $\bF[\N^n]$ and $\bF[\Z^n]$.
These are the rings of polynomials and Laurent polynomials, respectively. If $(a_1,\dots, a_n) \in \R^n$,
we write $ e^{(a_1,\dots, a_n)} $ for the
corresponding element of the group ring, which we think of as the monomial
$z_1^{a_1}\cdots z_n^{a_n}$.

If $\omegas=(\omega_1,\dots, \omega_n)$ is a tuple of closed 2-forms on a manifold $X$,
then there is an action of the group $C_2(X;\Z)$ of smooth 2-chains on $\bF[\R^n]$, given by
\begin{equation}
e^{h} \cdot e^{(a_1,\dots, a_n)} = e^{(a_1+\int_h \omega_1,\dots, a_n+\int_h \omega_n)}.
\label{eq:module-action-perturbed}
\end{equation}
We write $\bF[\R^n]_{\omegas}$ for $\bF[\R^n]$ equipped with this action.

\begin{define}\label{def:proj-int}
If $x \in \bF[\R^{n}]$, then we say that $x$ is \emph{projectively integral} if
\[
x = m \cdot y
\]
for some $y \in \bF[\Z^n]$ and monomial $m = e^{(a_1,\dots, a_n)} \in \bF[\R^n]$.
More generally, if $M_0$ is an $\bF$-vector space, and
\[
M := M_0 \otimes \bF[\R^n],
\]
we say that $x \in M$ is \emph{projectively integral} if there is some monomial
$m \in \bF[\R^n]$ such that $m \cdot x \in M_0 \otimes \bF[\Z^n]$.
\end{define}

\begin{define}
If $X$ is a smooth manifold, we say that a closed 2-form $\omega\in \Omega^2(X)$
is \emph{integral} if $\int_F \omega \in \Z$ for any closed,
singular 2-chain $F\in C_2(X;\Z)$.
\end{define}

If $\omegas$ is an  $n$-tuple of closed 2-forms on a sutured manifold
$(M,\g)$, the first and third authors \cite{JZConcordanceSurgery} described a perturbed version of
sutured Floer homology, denoted
$\SFH(M,\g;\bF[\R^n]_{\omegas})$. If $\cW$ is a cobordism between the sutured
manifolds $(M_0,\g_0)$ and $(M_1,\g_1)$, and $\omegas$ is a collection of
closed 2-forms on $\cW$, then the first and third authors also constructed a
perturbed version of the cobordism map
\[
F_{\cW;\omegas}\colon \SFH(M_0,\g_0;\bF[\R^n]_{\omegas_0})\to \SFH(M_1,\g_1;\bF[\R^n]_{\omegas_1}),
\]
where $\omegas_i=\omegas|_{M_i}$. A sketch of the construction may be found later in this section.

The main technical result of this section is the following:

\begin{prop}\label{prop:integrality}
Suppose that $S \subset B^4$ is a properly embedded, oriented surface
intersecting $S^3$ in a knot $K$, and $\omegas = (\omega_1,\dots,
\omega_{n})$ is a collection of closed, integral 2-forms on $B^4 \setminus N(S)$ that vanish on $S^3
\setminus N(K)$. Then, for any dividing set $\cA$ on $S$, the element
\[
F_{B^4, \cS; \omegas}(1) \in \HFKh(S^3,K) \otimes \bF[\R^n]
\]
is projectively integral,
where $\cS=(S,\cA)$ and $F_{B^4, \cS; \omegas}$ is the cobordism map on perturbed sutured
Floer homology~\cite{JZConcordanceSurgery} over $\bF[\R^n]$ induced by the sutured manifold
cobordism complementary to $\cS$ from the empty sutured manifold to $S^3(K)$,
where $\SFH(S^3(K)) \cong \HFKh(S^3,K)$.
\end{prop}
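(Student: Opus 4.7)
The plan is to unpack the construction of $F_{B^4, \cS; \omegas}$ from \cite{JZConcordanceSurgery} and track how the coefficients of $F_{B^4, \cS; \omegas}(1)$ depend on the forms $\omegas$. Decomposing the cobordism map according to relative $\Spin^c$ structures on the sutured cobordism $\cW$ complementary to $\cS$, one writes
\[
F_{B^4, \cS; \omegas}(1) = \sum_\xs \sum_{\mathfrak{s}} c_{\xs, \mathfrak{s}}(\omegas) \cdot \xs,
\]
where $\xs$ ranges over the generators of $\HFKh(S^3, K)$ and $\mathfrak{s}$ over the relative $\Spin^c$ structures on $\cW$. From the construction, each $c_{\xs, \mathfrak{s}}(\omegas)$ has the form $n_{\xs, \mathfrak{s}} \cdot e^{\int_{F_{\xs, \mathfrak{s}}} \omegas}$, where $n_{\xs, \mathfrak{s}} \in \bF$ is the unperturbed holomorphic count and $F_{\xs, \mathfrak{s}}$ is a 2-chain in $\cW$ determined by $(\xs, \mathfrak{s})$ up to closed 2-chains and up to 2-chains with boundary in $S^3 \setminus N(K)$.

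The main step is to compare exponents across pairs. Modulo boundary pieces in $S^3 \setminus N(K)$, on which $\omegas$ vanishes by hypothesis, the difference $F_{\xs, \mathfrak{s}} - F_{\ys, \mathfrak{t}}$ represents a class in $H_2(\cW; \Z)$. Since each $\omega_i$ is integral, it has integer periods on closed 2-cycles in $\cW$, so
\[
\int_{F_{\xs, \mathfrak{s}}} \omegas - \int_{F_{\ys, \mathfrak{t}}} \omegas \in \Z^n.
\]
Fixing any reference pair $(\xs_0, \mathfrak{s}_0)$ with $n_{\xs_0, \mathfrak{s}_0} \neq 0$ and setting $m := e^{\int_{F_{\xs_0, \mathfrak{s}_0}} \omegas}$, this shows that $m^{-1} \cdot F_{B^4, \cS; \omegas}(1) \in \HFKh(S^3, K) \otimes \bF[\Z^n]$, which is exactly projective integrality.

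The hard part will be aligning this abstract picture with the handle-by-handle construction of the sutured cobordism map in \cite{JZConcordanceSurgery}. Concretely, one must verify that the contribution of each elementary cobordism in the decomposition decomposes as a sum over relative $\Spin^c$ structures of the form (unperturbed count) $\times$ (single monomial), with the exponent determined by a 2-chain in $\cW$ well-defined up to integer-period corrections. This requires that the doubly periodic domains of the Heegaard multi-diagrams pair integrally with $\omegas$, which holds because such periodic domains are either supported in the sutured boundary $S^3 \setminus N(K)$ (where $\omegas$ vanishes) or close up to 2-cycles in $\cW$ (where integrality applies). One then has to check that the per-handle 2-chains glue coherently across handle attachments into global 2-chains representing relative $\Spin^c$ structures on $\cW$, which is a standard Mayer--Vietoris patching argument adapted to the handle decomposition of $\cW$; no additional hypothesis on $\omegas$ beyond integrality and vanishing on $S^3 \setminus N(K)$ is required.
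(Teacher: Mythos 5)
Your proposal is correct and follows essentially the same route as the paper: both arguments reduce projective integrality to the observation that the difference of the $2$-chains appearing in the exponents of any two summands becomes a closed cycle after adding a correction supported in $S^3 \setminus N(K)$ (where $\omegas$ vanishes), so integrality of $\omegas$ forces the exponents to differ by elements of $\Z^n$. The ``hard part'' you flag is exactly what the paper carries out, by lifting to a totally twisted map over $\bF[(C_2/B_2)(W)]$, performing the boundary computation on coned-off triangle domains (Lemma~\ref{lem:integrality-totally-twisted}), and invoking the Alexander grading formula to ensure all summands lie in the same relative $\Spin^c$ structure on $S^3(K)$ so that connecting disk classes exist.
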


\subsection{A totally twisted version of sutured Floer homology}

To show Proposition~\ref{prop:integrality}, it is convenient to lift the map on
perturbed sutured Floer homology to a totally twisted map, in the spirit of the version described by
Ozsv\'{a}th and Szab\'{o} for ordinary Heegaard Floer homology \cite{OSTriangles}*{Section~2.7}.

If $X$ is a topological space,
write $C_k(X)$ for the group of smooth, integral, singular $k$-chains in $X$.
Let $B_2(X)$ be the image of $\d \colon C_3(X) \to C_2(X)$,
and write $(C_2/B_2)(X)$ for the quotient $C_2(X)/B_2(X)$.

Let $(M,\g)$ be a balanced sutured manifold with admissible diagram $(\Sigma, \as, \bs)$,
and let $\ufrs$ be a relative $\Spin^c$ structure on $(M,\g)$.
If $L$ is a $\bF[(C_2/B_2)(M)]$-module, we define the twisted sutured Floer complex
\[
\CFu(\Sigma, \as, \bs,\ufrs; L)
\]
to be the group generated by elements $\xs \otimes h$, where $\xs \in \T_{\a} \cap \T_{\b}$
satisfies $\frs(\xs)=\ufrs$, and $h \in L$. The differential is given by
\[
\d(\xs \otimes h) = \sum_{\ys \in \bT_{\a} \cap \T_{\b}} \sum_{\substack{
\phi \in \pi_2(\xs,\ys) \\ \mu(\phi) = 1}} \# (\cM(\phi)/\R) \cdot \ys \otimes h\cdot e^{\tilde{\cD}(\phi)},
\]
where $\tilde{\cD}(\phi) \in C_2(M)$ is obtained by coning off the domain of $\phi$
using sets of compressing disks for $\as$ and $\bs$, which are implicit in the construction,
as in \cite[Section~2.2]{JZConcordanceSurgery}.
If $\omega$ is an $n$-tuple of closed 2-form on $M$, then the perturbed complex
$\CF(\Sigma, \as, \bs,\ufrs;\bF[\R]_{\omegas})$ is the tensor product
\[
\CFu(\Sigma, \as, \bs,\ufrs; \bF[(C_2/B_2)(M)]) \otimes_{\bF[(C_2/B_2)(M)]} \bF[\R^n]_{\omegas},
\]
where $\bF[(C_2/B_2)(M)]$ acts on $\bF[\R^n]$ as in equation~\eqref{eq:module-action-perturbed}.

We write
\[
\CFu(\Sigma, \as, \bs; L) = \bigoplus_{\ufrs \in \Spin^c(M,\g)} \CFu(\Sigma, \as, \bs,\ufrs; L).
\]
There are inclusion and projection maps
\[
i_{\ufrs} \colon \CFu(\cH, \ufrs; L) \to \CFu(\cH; L) \quad \text{and} \quad
\pi_{\ufrs} \colon \CFu(\cH; L) \to \CFu(\cH,\ufrs; L),
\]
where $\cH = (\Sigma, \as, \bs)$.

The chain complexes $\CFu(\Sigma, \as, \bs,\ufrs; L)$ for a given sutured manifold $(M,\g)$
and $\ufrs \in \Spin^c(M,\g)$ form a projective transitive system that we denote by $\CFu(M, \g, \ufrs; L)$.
I.e., the only monodromy of the transition maps is multiplication by $e^h$ for $h \in (C_2/B_2)(M)$,
up to chain homotopy; see \cite[Section~2.1]{JZConcordanceSurgery}.
The transition maps for changing diagrams are defined using straightforward extensions
of the formulas below for cobordism maps; also see \cite[Section~6]{JZConcordanceSurgery}.
In general, summing over $\Spin^c$ structures will not produce a natural invariant,
as in \cite{JZConcordanceSurgery}*{Section~6.5}.

Let $\cW = (W, Z, [\xi])$ be a balanced sutured manifold cobordism from $(M_0, \g_0)$ to $(M_1, \g_1)$.
We can view $\bF[(C_2/B_2)(W)]$ as a $\bF[(C_2/B_2)(M_k)]$-module
using the embedding $i_k \colon M_k \hookrightarrow W$ for $k \in \{0, 1\}$.
We now describe a totally twisted map
\[
\underline{F}_{\cW} \colon \CFu(M_0,\g_0; \bF[(C_2/B_2)(M_0)]) \to
\CFu(M_1,\g_1; \bF[(C_2/B_2)(W)]),
\]
as follows.
We decompose $\cW$ into a boundary cobordism $\cW^\d$ and a
special cobordism $\cW^{s}$, as in \cite{JCob}. We further decompose $\cW^s$ into
cylinders, and 1-handle, 2-handle, and 3-handle cobordisms.
As in \cite[Proposition~2.9]{JZConcordanceSurgery}, only the map
\[
\pi_{\ufrs_1} \circ \underline{F}_{\cW} \circ i_{\ufrs_0} \colon
\CFu(M_0,\g_0, \ufrs_0; \bF[(C_2/B_2)(M_0)]) \to
\CFu(M_1,\g_1, \ufrs_1; \bF[(C_2/B_2)(W)])
\]
is well-defined, up to multiplication by $e^h$ for $h \in (C_2/B_2)(W)$, and chain homotopy.

We begin by defining the totally twisted map for cylinders. If $W$ has a
Morse function $f$ with no critical points
and gradient-like vector field $v$, then the flow of $v/v(f)$ gives a diffeomorphism
between $W$ and $I \times M_0$. If $\cH_0$ is an admissible diagram for $M_0$, we
obtain an admissible diagram $\cH_1$ for $(M_1,\g_1)$ by using the flow of $v/v(f)$.
If $\xs$ is an intersection point for $\cH_0$, there is a
corresponding intersection point of $\cH_1$, for which we write $v_*(\xs)$.
We let $\Gamma_{\xs}$ be the singular 2-chain obtained by sweeping out
$\gamma_{\xs}$ under the flow of $v/v(f)$. The twisted map for $\cW$ in this
case is
\[
\underline{F}_{\cW}(\xs \otimes e^h)=v_*(\xs)\otimes e^{(i_0)_*(h) + \Gamma_{\xs}},
\]
where $h \in (C_2/B_2)(M_0)$, and $(i_0)_* \colon (C_2/B_2)(M_0) \to (C_2/B_2)(W)$
is induced by the embedding $i_0 \colon M_0 \hookrightarrow W$.
The twisted cobordism maps for 1-handles and 3-handles are defined using
a similar formula, so we leave the details to the reader. Compare \cite{JZConcordanceSurgery}*{Section~7.3}.

We now focus on 2-handle cobordisms, and follow \cite[Section~7.4]{JZConcordanceSurgery}.
We pick a Morse function $f$ on $W$ with
gradient-like vector field $v$ that is Morse--Smale and has only index 2 critical
points. Let $(\Sigma,\as,\bs,\bs')$ be a Heegaard triple subordinate to a
bouquet for the framed link in $M_0$ induced by $(f,v)$. We obtain an embedding of $W_{\a,\b,\b'}$ into $W$,
which is well-defined up to isotopy. If $\psi \in \pi_2(\xs, \Theta_{\b,\b'},\ys)$
is a class of triangles with $\xs \in \bT_\a \cap \bT_\b$ and $\ys \in \bT_\a \cap \bT_{\b'}$,
we obtain a 2-chain $\tilde{\cD}(\psi)$ in $W$ by coning off the domain $\cD(\psi)$.
The 2-handle map is defined via the formula
\[
\underline{F}_{\cW}(\xs \otimes e^h) = \sum_{\substack{\ys \in \bT_\a \cap \bT_{\b'}\\ \frs(\ys) = \ufrs_1}}
\sum_{\substack{\psi \in \pi_2(\xs,\Theta_{\b,\b'},\ys)\\ \mu(\psi) = 0}}
\# \cM(\psi) \cdot \ys \otimes e^{(i_0)_*(h) + \tilde{\cD}(\psi)},
\]
where $\xs \in \bT_\a \cap \bT_\b$ satisfies $\frs(\xs) = \ufrs_0$, and $h \in (C_2/B_2)(M_0)$.

The contact gluing map extends to this setting, as follows. Let $(M,\g)$ be a
sutured submanifold of $(M',\g')$, and $\xi$ a positive contact structure
on $M' \setminus \Int(M)$ that induces the dividing set $\g \cup \g'$.
For $\ufrs \in \Spin^c(M, \g)$ represented by the nowhere vanishing vector field $v$ on $M$,
we obtain $\ufrs' \in \Spin^c(M',\g')$ by gluing $v$ to $\xi^\perp$.
Then the contact gluing map
\[
\underline{\Phi}_{\xi} \colon \CFu(-M,\g, \ufrs ;\bF[(C_2/B_2)(M)]) \to \CFu(-M',\g', \ufrs' ;\bF[(C_2/B_2)(M')])
\]
is defined via the formula
\[
\underline{\Phi}_{\xi}(\xs \otimes e^h) = \Phi_{\xi}(\xs)\otimes e^{i_*(h)},
\]
where $\frs(\xs) = \ufrs'$, the map $i_* \colon (C_2/B_2)(M) \to (C_2/B_2)(M')$
is induced by the inclusion of $M$ into $M'$, and $\Phi_{\xi}$ is the untwisted gluing map.

Finally, as in \cite[Section~7.5]{JZConcordanceSurgery},
we define the totally twisted cobordism map $\underline{F}_{\cW}$
for a general balanced cobordism $\cW$ by composing the totally twisted
contact gluing map to obtain $\underline{F}_{\cW^\d}$,
followed by the totally twisted maps for the cylinders and handle cobordisms
to obtain $\underline{F}_{\cW^s}$.

One may follow the proof of invariance of the perturbed cobordism maps from
\cite[Section~7.5]{JZConcordanceSurgery} to see that the $\Spin^c$ restricted totally twisted cobordism map
$\pi_{\ufrs_1} \circ \underline{F}_{\cW} \circ i_{\ufrs_0}$ is well-defined,
up to overall multiplication by $e^h$ for $h \in (C_2/B_2)(W)$, and chain homotopy.

By construction, if $\omegas=(\omega_1,\dots, \omega_n)$ is a collection of closed 2-forms on $W$,
then one obtains the perturbed map $F_{\cW;\omegas}$
by tensoring with the group ring $\bF[\R^n]_{\omegas}$,
which is a module over $(C_2/B_2)(W)$ with the action shown in
equation~\eqref{eq:module-action-perturbed}; i.e.,
\begin{equation}\label{eq:pair-with-twisted}
F_{\cW; \omegas} = \underline{F}_{\cW} \otimes 1_{\bF[\R^n]_{\omegas}}.
\end{equation}

\subsection{Proof of Proposition~\ref{prop:integrality}}

We now proceed with the main details of the proof of Proposition~\ref{prop:integrality}. We begin with a lemma:

\begin{lem} \label{lem:integrality-totally-twisted}
Suppose that $\cW = (W,Z,[\xi]) \colon (M_0,\g_0) \to (M_1,\g_1)$ is a balanced sutured
manifold cobordism, and $\cH_0$ and $\cH_1$ are admissible diagrams for $(M_0,\g_0)$ and
$(M_1,\g_1)$, respectively.  Suppose $\xs_0$ and $\xs_0'$ are intersection
points on $\cH_0$, and $\xs_1$ and $\xs_1'$ are intersection points on $\cH_1$, such that
\[
\frs(\xs_0) = \frs(\xs_0') \in \Spin^c(M_0,\g_0) \quad \text{and} \quad
\frs(\xs_1) = \frs(\xs_1') \in \Spin^c(M_1,\g_1).
\]
Suppose $\xs_1 \otimes e^{h_{\xs_0}}$ appears as a summand of
$\underline{F}_{\cW}(\xs_0)$ and $\xs_1' \otimes e^{h_{\xs_0'}}$ appears as a
summand of $\underline{F}_{\cW}(\xs_0')$, for some $h_{\xs_0}$, $h_{\xs_0'} \in (C_2/B_2)(W)$.
Let $\phi_0 \in \pi_2(\xs_0,\xs_0')$ and $\phi_1\in \pi_2(\xs_1,\xs_1')$. Then
\[
h_{\xs_0} - h_{\xs_0'} + (i_1)_* \tilde{\cD}(\phi_1)-(i_0)_* \tilde{\cD}(\phi_0)
\]
is a closed 2-chain in $W$, where $(i_0)_*$ and $(i_1)_*$
are the maps induced by the inclusions of $M_0$ and $M_1$ into $W$.
\end{lem}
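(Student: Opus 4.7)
The plan is to decompose $\cW$ into elementary pieces and verify the claim on each. Write $\cW = \cW^\d \cup \cW^s$, where $\cW^s$ is a composition of cylinders together with 1-, 2-, and 3-handle cobordisms, as in the definition of $\underline{F}_{\cW}$. I first argue that the conclusion is preserved under composition: if a summand $\xs_2 \otimes e^{h_{\xs_0}}$ of $\underline{F}_{\cW_1 \cup \cW_2}(\xs_0)$ factors through summands $\xs_1 \otimes e^{h'_{\xs_0}}$ of $\underline{F}_{\cW_1}(\xs_0)$ and $\xs_2 \otimes e^{h''_{\xs_1}}$ of $\underline{F}_{\cW_2}(\xs_1)$, then $h_{\xs_0}$ is the sum of the pushforwards of $h'_{\xs_0}$ and $h''_{\xs_1}$ into $(C_2/B_2)(W)$. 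After picking an auxiliary Whitney disk $\phi_1 \in \pi_2(\xs_1,\xs_1')$ at the intermediate level, the required closed-chain expression for $\cW_1 \cup \cW_2$ splits as a sum of the analogous expressions for $\cW_1$ and $\cW_2$, both of which are closed by the inductive hypothesis.

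Next, I would handle cylinder, 1-handle, and 3-handle cobordisms. In each case $h_{\xs_0}$ has the form $\Gamma_{\xs_0}$ plus a contribution independent of $\xs_0$, where $\Gamma_{\xs_0}$ is the sweep of $\gamma_{\xs_0}$ under a gradient-like flow, with boundary $(i_1)_* \gamma_{\xs_1} - (i_0)_* \gamma_{\xs_0}$. A direct computation then shows that $\partial(\Gamma_{\xs_0} - \Gamma_{\xs_0'})$ agrees with $\partial\bigl((i_0)_* \tilde{\cD}(\phi_0) - (i_1)_* \tilde{\cD}(\phi_1)\bigr)$. The contact-gluing case for $\cW^\d$ reduces to the analogous statement inside $M_0$ and $M_1$, since the twisted exponent is defined as a pushforward along the inclusion $M_i \hookrightarrow M_i'$.

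The main case is the 2-handle cobordism. Here $h_{\xs_0} = \tilde{\cD}(\psi_0)$ and $h_{\xs_0'} = \tilde{\cD}(\psi_0')$ for triangle classes $\psi_0 \in \pi_2(\xs_0,\Theta_{\b,\b'},\xs_1)$ and $\psi_0' \in \pi_2(\xs_0',\Theta_{\b,\b'},\xs_1')$ of Maslov index zero. The key observation is that the juxtapositions $\psi_0 * \phi_1$ and $\phi_0 * \psi_0'$ both represent classes in $\pi_2(\xs_0, \Theta_{\b,\b'}, \xs_1')$, so their formal difference is a triply-periodic class, whose domain on $\Sigma$ is a linear combination of $\as$-, $\bs$-, and $\bs'$-circles. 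Coning off this periodic domain using the compressing disks in $W_{\a,\b,\b'} \subset W$ produces precisely the 2-chain
\[
h_{\xs_0} - h_{\xs_0'} + (i_1)_* \tilde{\cD}(\phi_1) - (i_0)_* \tilde{\cD}(\phi_0),
\]
which is therefore closed.

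The main obstacle will be the bookkeeping in the 2-handle case: one needs to verify that the 1-chains $\gamma_{\xs_i}$ and $\gamma_{\xs_i'}$ used in the sutured-manifold definitions of $\tilde{\cD}(\phi_i)$ in $M_i$ are compatible with the paths implicit in the coning-off of the triangle domain inside $W$. This is a routine but delicate choice-of-basepath issue, ultimately absorbed by working modulo $B_2(W)$.
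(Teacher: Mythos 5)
Your proposal is correct and follows essentially the same route as the paper: reduce to the elementary pieces of the cobordism (with the contact gluing, cylinder, 1- and 3-handle cases being straightforward boundary checks), the main step being the 2-handle/triangle-map case. Your phrasing of that case via the triply periodic domain $(\psi_0 * \phi_1) - (\phi_0 * \psi_0')$ is just a repackaging of the paper's direct computation of $\d\tilde{\cD}(\psi)$, $\d\tilde{\cD}(\psi')$, $\d\tilde{\cD}(\phi_0)$, and $\d\tilde{\cD}(\phi_1)$, in which the fixed 1-chain $C_{\a,\b,\b'}$ cancels.
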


\begin{proof}
It is sufficient to show the claim separately for the twisted contact gluing map, the
1-handle, 2-handle, and 3-handle maps, as well as the transition maps for
changing the Heegaard diagram.

The claim for the contact gluing maps, the 1-handle maps, and the 3-handle
maps are straightforward, so we focus on the 2-handle maps. The
claim for the transition maps for changing the Heegaard diagram is an easy
modification of the argument we present for 2-handles. Suppose $(f,v)$ is a
Morse--Smale pair on $W$, and $f$ has only index 2 critical points.
Suppose $(\Sigma,\as,\bs,\bs')$ is a Heegaard triple subordinate to a bouquet for the
framed link in $M_0$ induced by $(f,v)$. We obtain an embedding of the
3-ended cobordism $W_{\a,\b,\b'}$ into $W$. Suppose
$\psi \in \pi_2(\xs_0,\Theta_{\b,\b'},\xs_1)$ and
$\psi'\in \pi_2(\xs_0',\Theta_{\b,\b'},\xs_1')$. Let $\phi_0 \in \pi_2(\xs_0,\xs_0')$ and
$\phi_1 \in \pi_2(\xs_1,\xs_1')$ be classes of disks, as in the statement.

We simply note that
\begin{equation}
\begin{split}
\d \tilde{\cD}(\phi_0) = \gamma_{\xs_0'} - \gamma_{\xs_0},
&\quad \d \tilde{\cD}(\phi_1) = \gamma_{\xs_1'}-\gamma_{\xs_1},\\
\d \tilde{\cD}(\psi) = C_{\a,\b,\b'} + \gamma_{\xs_1} - \gamma_{\Theta_{\b,\b'}} - \gamma_{\xs_0},
\quad &\text{and} \quad  \d\tilde{\cD}(\psi') =
C_{\a,\b,\b'} + \gamma_{\xs_1'} - \gamma_{\Theta_{\b,\b'}} - \gamma_{\xs_0'},
\end{split}
\label{eq:compute-boundary-integrality}
\end{equation}
where $C_{\a,\b,\b'}$ is the 1-chain in $W$, defined as follows. The 4-manifold
$W_{\a,\b,\b'}$ is constructed by gluing $\Sigma \times \Delta$, $U_{\a} \times e_a$,
$U_{\b} \times e_{\b}$, and $U_{\b'} \times e_{\b'}$, where $\Delta$
denotes a triangle, $U_{\a}$, $U_{\b}$, and $U_{\b'}$ are standard sutured
compression bodies, and $e_{\a}$, $e_{\b}$, and $e_{\b'}$ are intervals that
are identified with the sides of the triangle $\Delta$. Given a collection of
compressing disks for $U_{\a}$, we write $c_{\a}$ for their center points, and
we define the 1-chain $C_{\a} = e_{\a} \times c_{\a}$. We define the 1-chains
$C_{\b}$ and $C_{\b'}$ similarly. The 1-chain $C_{\a,\b,\b'}$ is the sum of
$C_{\a}$, $C_{\b}$, and $C_{\b'}$ (in particular, it is independent of the
choice of triangle or intersection point).
By equation~\eqref{eq:compute-boundary-integrality}, we have
\[
\d\tilde{\cD}(\psi) - \d\tilde{\cD}(\psi') + \d (i_1)_* \tilde{\cD}(\phi_1) - \d (i_0)_* \tilde{\cD}(\phi_0) = 0,
\]
and the result follows.
\end{proof}

\begin{proof}[Proof of Proposition~\ref{prop:integrality}]
Write $\cW$ for the sutured manifold cobordism complementary to $S$, viewed
as a cobordism from from $\emptyset$ to $S^3(K)$, the sutured manifold complementary to $K$.

The map $F_{\cW;\omegas}$ satisfies an Alexander grading formula;
see \cite{JMComputeCobordismMaps}*{p.~3} and
\cite{ZemAbsoluteGradings}*{Theorem~1.4}. In particular, all summands of
$F_{\cW;\omegas}(1)$ reside in the same Alexander grading, which is
equivalent to representing the same relative $\Spin^c$ structure on $S^3(K)$.

We apply Lemma~\ref{lem:integrality-totally-twisted} to see that, if
$\xs \otimes e^{h} $ and $ \xs' \otimes e^{h'}$ are summands of the totally
twisted map $\underline{F}_{\cW}(1)$ for some $h$, $h' \in (C_2/B_2)(W)$,
and $\phi \in \pi_2(\xs,\xs')$ is a class of disks, then
\[
h - h' + \tilde{\cD}(\phi)
\]
is a closed 2-chain. We obtain that
\[
\int_{h - h' + \tilde{\cD}(\phi)} \omega_i \in \Z
\]
for every $i \in \{1,\dots,n\}$, since $\omega_i$ is integral by assumption.
Note that $\omega_i$ also vanishes on $S^3 \setminus N(K)$ by assumption, so
$\int_{\tilde{\cD}(\phi)} \omega_i = 0$, and hence
\[
\int_h \omega_i - \int_{h'} \omega_{i} \in \Z.
\]
Since the perturbed map is obtained from the totally twisted one by tensoring
with $\bF[\R^n]_{\omegas}$, we conclude that, after multiplying
$F_{\cW;\omegas}(1)$ by some $e^{(a_1,\dots, a_n)}$, we obtain an element of
$\HFKh(K)\otimes \bF[\Z^n]$, concluding the proof.
\end{proof}

\section{Concordance rim surgery and knot Floer homology}

In this section, we compute the effect of concordance rim surgery on the
perturbed cobordism maps. Suppose $S$ is a properly
embedded, oriented surface in $B^4$ intersecting $S^3$ in a
knot $K$. We identify a neighborhood $N(S)$ of $S$ with $S \times D^2$. It is
straightforward to investigate the Mayer--Vietoris sequence for the
decomposition of $B^4$ as the union of $B^4 \setminus N(S)$ and $N(S)$ to see that
\begin{equation*}
H^2(B^4 \setminus N(S), S^3 \setminus N(K)) \iso H^1(S,\d S).
\end{equation*}
Geometrically, this isomorphism can be described by
taking a properly embedded surface
in $B^4 \setminus N(S)$ with boundary in $\d N(S)$,
taking its intersection with $S \times S^1$, and
projecting to $S$. Hence, we can identify a basis of
$H^2(B^4 \setminus N(S), S^3 \setminus N(K))$ with a basis of $H_1(S)$.

If $C$ is a self-concordance of a knot $J$ in $S^3$,
then it is also straightforward to see that
\begin{equation}\label{eq:H^2-complement-concordance}
H^2((S^1 \times S^3) \setminus N(C)) \iso \Z.
\end{equation}
A generator is given by the Poincar\'e dual of $\{0\} \times F \subset
\{0\} \times (S^3 \setminus J)$, where $F$ is a Seifert surface of $J$.

Note also that there is a canonical isomorphism
\begin{equation}\label{eq:canonical-iso}
H^2(B^4 \setminus N(S), S^3 \setminus N(K)) \iso
H^2(B^4 \setminus N(S(\g,C)), S^3\setminus N(K)).
\end{equation}

The goal of this section is to prove the following:

\begin{thm}\label{thm:concordance-rim-surgery}
Suppose that $S \subset B^4$ is a properly embedded surface with boundary
$K \subset S^3$, and let $\omegas=(\omega_1,\dots,\omega_n)$ denote an $n$-tuple of closed 2-forms
in $H^2(B^4 \setminus N(S), S^3 \setminus N(K);\R)$.  Let $\gamma \subset S$ be a
simple closed curve, and let $T_\gamma \subset \d N(S)$ denote the 2-torus
which is the preimage of $\gamma$ with respect to the projection $\d N(S) \to S$.
If $C$ is a self-concordance of a knot $J$, and $S(\g,C)$ is the surface
obtained by concordance rim surgery on $S$ along $\g$ with pattern $C$, then
\[
F_{B^4, S(\g,C);\omegas}(1) = \Lef_{Z(\g,\omegas)}(C) \cdot F_{B^4,S;\omegas}(1),
\]
where
\[
Z(\g,\omegas) = z_1^{\int_ {T_\g} \omega_1} \cdots z_{n}^{\int_{T_\g} \omega_{n}}.
\]
Here, we are giving $S(\g,C)$ and $S$ the decoration where the $\ws$-subregion is a bigon,
and the $\zs$-subregion is a genus $g(S)$ subsurface.
\end{thm}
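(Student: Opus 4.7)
The plan is to reduce the theorem to the concordance surgery formula of \cite{JZConcordanceSurgery}, applied to the rim torus $T_\gamma$ inside the sutured manifold cobordism complementary to $S$. First I would invoke the equivalence between concordance rim surgery on $S$ along $\gamma$ with pattern $C$ and concordance surgery on $B^4$ along $T_\gamma$ with pattern $C$, as outlined in Remark~\ref{rem:rim}. Kim's proof \cite{Kim}*{Lemma~2.4} of this equivalence for twist rim surgery should extend essentially verbatim to a general self-concordance, since the only input is the product structure condition $(I \times N(p)) \cap C = I \times (N(p) \cap K)$ near a meridional disk, which allows one to glue the two local models. This reduces the statement to computing the effect on $F_{B^4,S;\omegas}(1)$ of concordance surgery along the embedded torus $T_\gamma$ with pattern $C$.

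Next I would work in the sutured manifold cobordism $\cW$ complementary to $S$, viewed as a cobordism from $\emptyset$ to $S^3(K)$. The rim torus $T_\gamma$ lies in $\Int(\cW)$, and concordance surgery along it produces the sutured cobordism complementary to $S(\gamma,C)$. The main concordance surgery formula of \cite{JZConcordanceSurgery} then yields an equality of the form
\[
F_{B^4, S(\gamma,C);\omegas}(1) = \Lef_{z}(C) \cdot F_{B^4, S;\omegas}(1),
\]
where $z$ is the monomial in $\bF[\R^n]_{\omegas}$ assigned to the generator of $H^2((S^1 \times S^3) \setminus N(C)) \iso \Z$ from equation~\eqref{eq:H^2-complement-concordance}. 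Under the gluing of local models, this generator pulls back to $\PD[T_\gamma]$, and so the module action from equation~\eqref{eq:module-action-perturbed} identifies $z$ with $Z(\gamma,\omegas)$, as required.

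The principal obstacle will be bookkeeping the decorations and correctly identifying the Lefschetz variable. Since the concordance surgery takes place in $N(\gamma)$, which is disjoint from the arcs $\cA$ and from the basepoints $\ws,\zs$, the $\ws$-bigon decoration on $S$ and on $S(\gamma,C)$ agree outside a neighborhood of $\gamma$, so no correction to the decoration-dependent map is needed when applying the \cite{JZConcordanceSurgery} formula. Verifying that the abstract generator of $H^2((S^1 \times S^3) \setminus N(C))$ in that formula really corresponds to $\PD[T_\gamma]$ after the gluing then reduces to a direct computation using the canonical isomorphism in equation~\eqref{eq:canonical-iso} together with the Mayer--Vietoris description preceding it; the upshot is that pairing with each $\omega_i$ returns exactly the period $\int_{T_\gamma}\omega_i$, producing the claimed monomial $Z(\gamma,\omegas)$.
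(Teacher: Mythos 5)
Your overall strategy---reduce to the concordance surgery formula of \cite{JZConcordanceSurgery} applied to the rim torus---is precisely the route the authors mention in the remark following their proof, and they deliberately do not take it. The reason is your very first step: the equivalence between concordance rim surgery (gluing the annulus $A$ built from $C$ into $N(\g) \approx S^1\times B^3$, as in Definition~\ref{def:rim1}) and concordance surgery on the ambient $4$-manifold along the rim torus $T_\g$ (as in Remark~\ref{rem:rim}) is only established by Kim's Lemma~2.4 in the special case of twist rim surgery. The paper states explicitly in Remark~\ref{rem:rim} that the authors ``do not study the relationship between the two possible definitions of concordance rim surgery in general.'' Your assertion that Kim's proof ``should extend essentially verbatim'' because the only input is the product condition near $N(p)$ is exactly the point that needs an argument: the two operations are a priori different (one modifies the surface inside a fixed $N(\g)$, the other excises $N(T_\g)\cong T^2\times D^2$ from the ambient manifold and glues in the complement of the glued-up concordance in $S^1\times S^3$), and identifying the resulting pairs requires a diffeomorphism of complements that respects the gluing data. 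Until you supply that, the reduction to \cite[Corollary~5.5]{JZConcordanceSurgery} is not justified, and this is a genuine gap rather than routine bookkeeping.

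The paper's proof is structured specifically to avoid this issue. It factors both cobordisms through $N(\g)$ using the composition law, observes via Lemma~\ref{lem:compute-HFK-L2} that both local maps land in a rank-one summand of $\HFLh(S^1\times S^2,\bL_2)\otimes \bF[\R^{2g}]$ in the relevant bigrading, and then detects the two maps by post-composing with the turned-around cobordism $\bar{\cW}$; the composites are the complements of the closed tori $S^1\times U$ and (the glued-up) $C$ in $S^1\times S^3$, to which the computation behind \cite{JZConcordanceSurgery}*{Proposition~5.3} applies directly. If you want to salvage your approach, you must either prove the definitional equivalence for general self-concordances or adopt a doubling/detection argument of this kind. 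Your final paragraph (matching the generator of $H^2((S^1\times S^3)\setminus N(C))$ with $\PD[T_\g]$ so that the Lefschetz variable becomes $Z(\g,\omegas)$) is fine and is consistent with what the paper does.
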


We will prove Theorem~\ref{thm:concordance-rim-surgery} in Section~\ref{sec:proof-rim-formula}.

\subsection{The Floer homology of the 2-fiber link in \texorpdfstring{$S^1\times S^2$}{S1 X S2}}

In this section, we describe the Floer homology of a simple link in $S^1 \times S^2$
that appears when we do rim surgery. Suppose $S$ is a properly embedded, oriented
surface in $B^4$, and $\g \subset S$ is a simple closed curve.
Given a normal framing of $S$ along $\g$, we may identify a neighborhood of $\g$ in $(B^4, S)$
with $(S^1 \times B^3, S^1 \times a)$, where $a \subset B^3$ denotes
a properly embedded, unknotted arc.
We may view $(S^1 \times B^3, S^1 \times a)$ as a link cobordism from $\emptyset$ to
\[
(S^1 \times S^2, L_2),
\]
where $L_2$ consists of two $S^1$-fibers of $S^1 \times S^2$, with opposite orientations.
Let $\bL_2$ be $L_2$ decorated with four basepoints, two on each component.
A Heegaard diagram for $(S^1 \times S^2, \bL_2)$ is shown in Figure~\ref{fig:6}.

\begin{figure}[ht!]
	\centering
	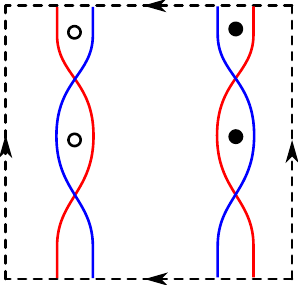
	\caption{A Heegaard diagram for $(S^1 \times S^2, \bL_2)$.}
    \label{fig:6}
\end{figure}

\begin{lem}\label{lem:compute-HFK-L2}
The vector space $\HFLh(S^1 \times S^2,\bL_2)$ has rank 4.
As a $(\gr_{\ws},\gr_{\zs})$-bigraded vector space, we have
\[
\HFLh(S^1 \times S^2,\bL_2)=(\bF)_{(1,-1)} \oplus (\bF^2)_{(0,0)} \oplus (\bF)_{(-1,1)}.
\]
Furthermore, if $\omega$ is any closed 2-form on $(S^1 \times S^2) \setminus N(L_2)$, then
\[
\HFLh(S^1 \times S^2, \bL_2, i; \bF[\R]_{\omega}) \iso
\HFLh(S^1 \times S^2, \bL_2, i) \otimes \bF[\R]
\]
for each Alexander grading $i \in \Z$.
\end{lem}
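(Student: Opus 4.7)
The plan is to read everything off the small Heegaard diagram of Figure~\ref{fig:6}: identify the generators, check that no Whitney disk contributes to the hat differential, pin down the bigrading by symmetry and the relative grading formula, and then observe that the perturbation is forced to be trivial because there are no disks left to weight.

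First I would enumerate the intersection points $\bT_\a \cap \bT_\b$ in the diagram. The rank-four claim, together with the standard form of a genus-one multi-pointed diagram for $S^1 \times S^2$ with a two-component null-homologous link, suggests that there are exactly four such points $\xs_1, \dots, \xs_4$. Next I would analyze the domains connecting them: by inspecting the regions of the diagram, I would verify that every nontrivial positive domain meets some basepoint in $\ws \cup \zs$. Since the hat differential on $\CFLh$ counts only classes $\phi$ with $n_\ws(\phi) = n_\zs(\phi) = 0$ and $\mu(\phi)=1$, this shows that $\d \equiv 0$ on $\CFLh(S^1 \times S^2, \bL_2)$, so the homology is free of rank four. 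The point to be careful about here is ruling out long "winding" disks exploiting the non-trivial $\pi_1(S^1 \times S^2) \cong \Z$; this is the main (mild) obstacle, and it is handled by tracking how the periodic domain meets the basepoints.

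To fix the $(\gr_\ws,\gr_\zs)$-bigrading I would use the Ozsv\'ath--Szab\'o relative grading formula $\gr_\ws(\xs)-\gr_\ws(\ys) = \mu(\phi) - 2 n_\ws(\phi)$ (and similarly for $\gr_\zs$) along positive connecting classes between the four generators. This determines the bigrading up to overall shifts on each component of the grading. The conjugation/swap symmetry $(\ws,\zs)\leftrightarrow(\zs,\ws)$ of link Floer homology, combined with the fact that $L_2$ is null-homologous so that $A=(\gr_\ws-\gr_\zs)/2$ is an integer Alexander grading, forces the bigraded structure to be symmetric about the antidiagonal. Together with a direct computation of one Maslov grading (say of a generator mapped to a known element under a simple destabilization or Alexander-grading identification), this yields
\[
\HFLh(S^1 \times S^2, \bL_2) = (\bF)_{(1,-1)} \oplus (\bF^2)_{(0,0)} \oplus (\bF)_{(-1,1)}.
\]

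For the perturbed statement, the key observation is that the twisted hat differential on $\CFLh(S^1 \times S^2,\bL_2;\bF[\R]_\omega)$ counts exactly the same class of disks (those with $n_\ws(\phi)=n_\zs(\phi)=0$ and $\mu(\phi)=1$), the only modification being a weight $e^{\int_{\tilde\cD(\phi)}\omega}$. Since the previous step produced no such disks, the perturbed differential vanishes as well, so
\[
\CFLh(S^1 \times S^2, \bL_2; \bF[\R]_\omega) = \CFLh(S^1 \times S^2, \bL_2) \otimes \bF[\R]
\]
as bigraded $\bF[\R]$-modules with zero differential. Restricting to a fixed Alexander grading $i$ and passing to homology gives the final isomorphism. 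The only substantive step is the diagram analysis in the first paragraph; the bigrading and perturbation pieces are then essentially formal.
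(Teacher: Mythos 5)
Your proposal is correct and follows essentially the same route as the paper: the paper's proof simply reads the four generators and their bigradings off the diagram in Figure~\ref{fig:6} and observes that the differential vanishes because no index~1 class has zero multiplicity at all four basepoints, with the perturbed statement following for the same reason. Your additional care about winding classes and the periodic domain is a reasonable elaboration of the same diagram analysis, not a different argument.
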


\begin{proof}
Using the diagram in Figure~\ref{fig:6}, we have
\[
\CFLh(S^1 \times S^2, \bL_2) = (\bF)_{(1,-1)} \oplus
(\bF^2)_{(0,0)} \oplus (\bF)_{(-1,1)}.
\]
The differential vanishes, since there are no index 1 classes
that have zero multiplicity at the four basepoints.
\end{proof}

\begin{rem}
A basis for $\HFLh(S^1 \times S^2, \bL_2)$ may also be specified using the
images of the cobordism maps for $(S^1 \times B^3, S^1 \times a)$,
with the four dividing sets shown in Figure~\ref{fig:5}. This may be proven using the composition law,
and standard TQFT-style arguments, though we will not need this fact.
\end{rem}

\begin{figure}[ht!]
\centering
\begingroup%
  \makeatletter%
  \providecommand\color[2][]{%
    \errmessage{(Inkscape) Color is used for the text in Inkscape, but the package 'color.sty' is not loaded}%
    \renewcommand\color[2][]{}%
  }%
  \providecommand\transparent[1]{%
    \errmessage{(Inkscape) Transparency is used (non-zero) for the text in Inkscape, but the package 'transparent.sty' is not loaded}%
    \renewcommand\transparent[1]{}%
  }%
  \providecommand\rotatebox[2]{#2}%
  \newcommand*\fsize{\dimexpr\f@size pt\relax}%
  \newcommand*\lineheight[1]{\fontsize{\fsize}{#1\fsize}\selectfont}%
  \ifx\svgwidth\undefined%
    \setlength{\unitlength}{255.4789148bp}%
    \ifx\svgscale\undefined%
      \relax%
    \else%
      \setlength{\unitlength}{\unitlength * \real{\svgscale}}%
    \fi%
  \else%
    \setlength{\unitlength}{\svgwidth}%
  \fi%
  \global\let\svgwidth\undefined%
  \global\let\svgscale\undefined%
  \makeatother%
  \begin{picture}(1,0.32937527)%
    \lineheight{1}%
    \setlength\tabcolsep{0pt}%
    \put(0,0){\includegraphics[width=\unitlength,page=1]{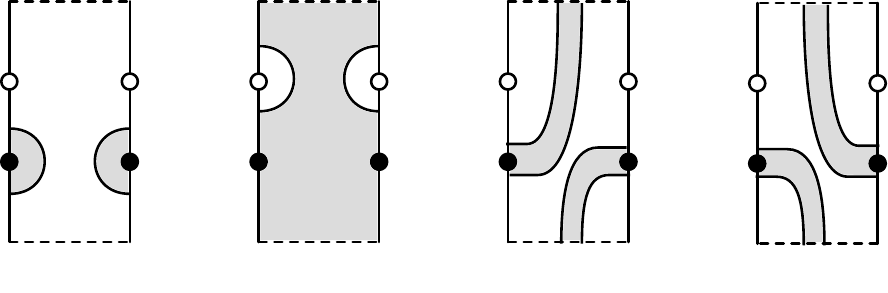}}%
    \put(0.07835441,0.00551966){\color[rgb]{0,0,0}\makebox(0,0)[t]{\lineheight{1.25}\smash{\begin{tabular}[t]{c}$\cA$\end{tabular}}}}%
    \put(0.35939535,0.00599366){\color[rgb]{0,0,0}\makebox(0,0)[t]{\lineheight{1.25}\smash{\begin{tabular}[t]{c}$\bar{\cA}$\end{tabular}}}}%
  \end{picture}%
\endgroup%

\caption{Dividing sets on an annulus.  The link cobordism maps for
$(S^1 \times B^3, S^1 \times a)$ with these dividing sets give a basis of
$\HFLh(S^1 \times S^2,\bL_2)$.}
\label{fig:5}
\end{figure}

\subsection{Proof of the concordance rim surgery formula}
\label{sec:proof-rim-formula}

\begin{proof}[Proof of Theorem~\ref{thm:concordance-rim-surgery}]
We factor both link cobordisms through a regular neighborhood of the curve $\g$.
Consider the annuli
\[
A := S \cap N(\g) \quad \text{and} \quad A' := S(\g,C) \cap N(\g).
\]
Let $\cA$ and $\cA'$ denote decorations of $A$ and $A'$, respectively,
such that the $\ws$-subregion consists of two bigons, as
on the left side of Figure~\ref{fig:5}.
Identify $N(\g)$ with $S^1 \times B^3$. We let $\cW$ and $\cW'$ denote the
two sutured manifold cobordisms that are complementary to the decorated link cobordisms
\[
(S^1 \times B^3, \cA) \quad \text{and} \quad (S^1 \times B^3, \cA'),
\]
respectively.

Write $\taus = \omegas|_{N(\g)\setminus N(S)}$ and
$\taus' = \omegas_{N(\g)\setminus N(S(\g,C))}$, where
$\taus = (\tau_1, \dots, \tau_{2g})$ and $\taus' = (\tau_1',\dots,\tau_{2g}')$.
Using the isomorphism from equation~\eqref{eq:H^2-complement-concordance},
we may assume that
\begin{equation}
\tau_i = \alpha_i \cdot \PD[\{0\} \times F_U] \quad \text{and} \quad \omega_i' =
\alpha_i \cdot \PD[\{0\} \times F_K],\label{eq:classes-PD}
\end{equation}
for some $\alpha_i \in \R$, where $F_U$ and $F_K$ are Seifert surfaces
for the unknot $U$ and for $K$, respectively.

Using the composition law, it suffices to show that
\begin{equation}\label{eq:local-rim-surgery-formula}
F_{\cW';\taus'} \doteq \Lef_{Z(\g,\omegas)}(C) \cdot F_{\cW;\taus},
\end{equation}
where $\doteq$ denotes equality up to multiplication by a monomial.
Both maps have $(\gr_{\ws},\gr_{\zs})$-bigrading $(1,-1)$, and hence we can view both maps as
having the same rank 1 codomain over $\bF[\R^{2g}]$ by
Lemma~\ref{lem:compute-HFK-L2}.

We now consider the link cobordism
$(S^1\times B^3, \bar{A})$, obtained by reversing and turning around the
orientation of $\cW$, which we view as a link cobordism from $(S^1\times S^2, L_2)$
to the empty set. Write $\bar{\cA}$ for $\bar{A}$ decorated with the
second dividing set from the left in Figure~\ref{fig:5}. Write $\bar{\cW}$
for the sutured manifold cobordism complementary to $\bar{\cA}$ in $S^1
\times B^3$, viewed as a cobordism from $(S^1\times S^2,\bL_2)$ to the empty
set. We let $\bar{\taus}$ denote the tuple of 2-forms on $(S^1\times
B^3)\setminus N(\bar{A})$ which are scalar multiples of the Poincar\'{e}
dual of $-F_U$, as in equation~\eqref{eq:classes-PD}.

Since we are viewing the domain and codomain of both $F_{\cW';\taus'}$ and $F_{\cW;\taus}$ as rank 1 over $\bF[\R^{2g}]$,
to establish Equation~\eqref{eq:local-rim-surgery-formula}, it is sufficient to show that
\begin{equation}\label{eq:reformulated-local-rim-surgery}
F_{\bar{\cW};\bar{\taus}} \circ F_{\cW';\taus'} \doteq
\Lef_{Z(\g,\omegas)}(C) \cdot F_{\bar{\cW};\bar{\taus}} \circ  F_{\cW;\taus},
\end{equation}
and that both sides are non-zero.

The sutured manifold cobordisms $\bar{\cW} \circ \cW$ and $\bar{\cW} \circ \cW'$
are equal to the complements of the tori $S^1 \times U$ and $C$ in
$S^1 \times S^3$, respectively. Write $\omegas_0$ for the tuple of 2-forms which is
$\taus$ on $\cW$ and $\bar{\taus}$ on $\bar{\cW}$. Write $\omegas_0'$ for the tuple of 2-forms
which is $\taus'$ on $\cW'$ and $\bar{\taus}$ on $\bar{\cW}$.
The proof of \cite{JZConcordanceSurgery}*{Proposition~5.3} implies that
\[
F_{\bar{\cW} \circ \cW; \omegas_0}(1) \doteq 1 \quad \text{and} \quad
F_{\bar{\cW} \circ \cW';\omegas'_0}(1) \doteq \Lef_{Z(\g,\omegas)}(C),
\]
which implies equation~\eqref{eq:reformulated-local-rim-surgery}, and hence
equation~\eqref{eq:local-rim-surgery-formula}, completing the proof.
\end{proof}

\begin{rem}
  If we use the alternate definition of concordance rim surgery from Remark~\ref{rem:rim},
  then one can directly invoke \cite[Corollary~5.5]{JZConcordanceSurgery}.
  Recall that the two definitions are equivalent by the work of Kim~\cite{Kim}*{Lemma~2.4} in the case
  of 1-twist rim surgery.
\end{rem}

\section{The invariant \texorpdfstring{$\Omega(S)$}{Omega(S)}}

In this section, we define an invariant $\Omega(S) \in \Z^{\ge 0} \cup \{-\infty\}$
for a surface $S \subset B^4$ with $g(S)>0$, bounding a knot
$K$ in $S^3$, and prove that it is a diffeomorphism invariant of $S$.

\subsection{Defining \texorpdfstring{$\Omega(S)$}{Omega(S)}}

Suppose $R$ is a UFD. We define
\[
\Omega_R \colon R \to \Z^{\ge 0} \cup \{-\infty\}
\]
to be the number of irreducible (non-unit) factors of an element of $R$,
counted with multiplicity. By convention, we set $\Omega(0) = -\infty$.

The map $\Omega$ may be extended to modules, in the following sense.
If $M$ is a free, finitely generated $R$-module, then we may define
\[
\Omega_M \colon M \to \Z^{\ge 0} \cup \{-\infty\},
\]
as follows. We set $\Omega(0) = -\infty$, and, for $x \neq 0$, we set
\[
\Omega_M(x) = \max\{\, \Omega_R(a) : x = a \cdot y, \text{ where } a \in R \text{, } y\in M \,\}.
\]
Equivalently, we may define $\Omega_M(x)$ by picking a free basis $e_1, \dots, e_n$ for $M$ over $R$,
writing $x = a_1 e_1+ \cdots +a_n e_n$, and setting
\[
\Omega_M(x) = \Omega_M(\gcd(a_1,\dots, a_n)).
\]
If $x \in M$ and $a \in R$, then
\begin{equation}\label{eq:additivity-of-Omega}
\Omega_M(a \cdot x) = \Omega_R(a) + \Omega_M(x).
\end{equation}
Note that, as $\Omega_M$ is defined without reference to a basis,
it is invariant under $R$-linear isomorphisms of $M$.

We focus now on the group ring $\bF[\R^n]$, which is not a UFD.  Note that
$\bF[\N^n] = \bF[z_1,\dots, z_n]$ is a UFD. Furthermore, $\bF[\Z^n]$ is also a
UFD, as it is the localization of $\bF[\N^n]$ at the set of monomials. Using
the above construction, we obtain a map $\Omega_{\bF[\Z^n]} \colon
\bF[\Z^n] \to \Z^{\ge 0} \cup \{-\infty\}$, as well as a similar function for
finitely generated, free modules over $\bF[\Z^n]$.

Suppose $p \in \bF[\R^n]$ is \emph{projectively integral} (Definition~\ref{def:proj-int}),
and $m$ is a monomial such that $m \cdot p \in \bF[\Z^n]$.
We then define
\[
\Omega_{\bF[\Z^n]}(p) := \Omega_{\bF[\Z^n]}(m \cdot p),
\]
which is clearly independent of the choice of monomial $m$.

More generally, suppose $M_0$ is a finite dimensional $\bF$-vector space, and
\[
M = M_0\otimes \bF[\R^n].
\]
If $x \in M$ is projectively integral,
and $m \cdot x \in M_0 \otimes \bF[\Z^n]$ for a monomial $m$, we define
\[
\Omega_M(x) := \Omega_{M_0 \otimes \bF[\Z^n]}(m \cdot x),
\]
which clearly does not depend on the monomial $m$.

If $f \colon \R^n \to \R^n$ is linear map, then there is an induced
endomorphism $\underline{f}$ on the group ring $\bF[\R^n]$, defined on
monomials via the formula
\[
\underline{f}(e^{\ve{a}}) = e^{f(\ve{a})}.
\]
It is straightforward to check the following:

\begin{lem}\label{lem:easy-algebra}
Suppose that $M_0$ is an $\bF$-vector space and $M = M_0 \otimes \bF[\R^n]$.
Suppose further that $\phi$ is an automorphism of $M_0$,
and $f\in \GL_n(\Z)$. If $x$ is projectively integral,
then $(\phi \otimes \underline{f})(x)$ is also projectively integral, and
\[
\Omega_M\left((\phi \otimes \underline{f})(x)\right) = \Omega_M(x).
\]
\end{lem}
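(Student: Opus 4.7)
The plan is to write $\phi \otimes \underline{f} = (\phi \otimes \id_{\bF[\R^n]}) \circ (\id_{M_0} \otimes \underline{f})$ and verify both claims factor by factor. Both factors are $\bF$-linear automorphisms of $M$, so projective integrality will follow once I check that each preserves the submodule $M_0 \otimes \bF[\Z^n]$ after multiplication by a suitable monomial.

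First I would handle $\id_{M_0} \otimes \underline{f}$. Since $f \in \GL_n(\Z)$, the map $f$ restricts to a group automorphism of $\Z^n$ with inverse $f^{-1} \in \GL_n(\Z)$, and consequently $\underline{f}$ restricts to a ring automorphism of $\bF[\Z^n]$ that sends monomials to monomials in both $\bF[\R^n]$ and $\bF[\Z^n]$. If $m \cdot x \in M_0 \otimes \bF[\Z^n]$ with $m$ a monomial, then
\[
\underline{f}(m) \cdot (\id_{M_0} \otimes \underline{f})(x) = (\id_{M_0} \otimes \underline{f})(m \cdot x) \in M_0 \otimes \bF[\Z^n],
\]
with $\underline{f}(m)$ still a monomial, so $(\id_{M_0} \otimes \underline{f})(x)$ is projectively integral. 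To compute its $\Omega$-value, fix a basis $e_1, \dots, e_k$ of $M_0$, write $m \cdot x = \sum_i e_i \otimes a_i$ with $a_i \in \bF[\Z^n]$, and note
\[
(\id_{M_0} \otimes \underline{f})(m \cdot x) = \sum_i e_i \otimes \underline{f}(a_i).
\]
Because $\underline{f}$ is a ring automorphism of the UFD $\bF[\Z^n]$, it bijectively permutes the associate classes of irreducible elements, so $\underline{f}(\gcd(a_1, \dots, a_k))$ is an associate of $\gcd(\underline{f}(a_1), \dots, \underline{f}(a_k))$; hence both have the same number of irreducible factors, giving $\Omega_M((\id_{M_0} \otimes \underline{f})(x)) = \Omega_M(x)$.

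Next I would handle $\phi \otimes \id_{\bF[\R^n]}$, which is a $\bF[\R^n]$-linear automorphism of $M$ induced by a base change on $M_0$. If $m \cdot x \in M_0 \otimes \bF[\Z^n]$, then so is $m \cdot (\phi \otimes \id)(x) = (\phi \otimes \id)(m \cdot x)$, establishing projective integrality. Invariance of $\Omega_{M_0 \otimes \bF[\Z^n]}$ under this $\bF[\Z^n]$-linear automorphism is the basis-independence observation following the definition of $\Omega_M$: the gcd of coefficients of an element of a free module over a UFD is well-defined up to units. Composing the two steps proves the lemma.

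The only nontrivial ingredient is the assertion that $\underline{f}$, regarded as a ring automorphism of $\bF[\Z^n]$, preserves the irreducible-factor count; and this is essentially immediate from the existence of the two-sided inverse $\underline{f^{-1}}$. Everything else is bookkeeping involving the definitions of projective integrality and of $\Omega_M$.
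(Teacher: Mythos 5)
Your proof is correct, and it supplies exactly the verification the paper leaves to the reader (the lemma is stated with no proof beyond the remark that it is straightforward to check). The two key observations — that $\underline{f}$ restricts to a ring automorphism of the UFD $\bF[\Z^n]$ carrying monomials to monomials, hence preserves both integrality up to a monomial and the irreducible-factor count of gcds, and that $\phi\otimes\id$ is an $\bF[\Z^n]$-linear automorphism under which $\Omega$ is invariant by its basis-free definition — are precisely the intended content.
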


\begin{define}
Let $S \subset B^4$ be an oriented surface of genus $g>0$ in $B^4$, bounding
a knot $K$ in $S^3$, and let $\omegas$ be a $2g$-tuple of integral 2-forms that form a
basis of $H^2(B^4 \setminus N(S), S^3 \setminus N(K))$. Let $\cS$ denote $S$
decorated with a single dividing arc, such that $g(\cS_{\ws}) = 0$ and
$g(\cS_{\zs}) = g(S)$. Proposition~\ref{prop:integrality} implies that
$\hat{F}_{B^4,\cS;\omegas}(1)$ is projectively integral. We define
\[
\Omega(S;\omegas):=\Omega_{\HFKh(K)\otimes \bF[\Z^{2g}]}(\hat{F}_{B^4,\cS;\omegas}(1)).
\]
\end{define}

\subsection{Diffeomorphism invariance of $\Omega$}

In this section, we prove that $\Omega(S;\omegas)$ is independent of the choice of $\omegas$,
and furthermore, it is a diffeomorphism invariant of $S$.

\begin{thm}\label{thm:diffeomorphism-invariant}
 Suppose $S$ is an oriented, genus $g > 0$ surface bounding a knot $K$ in $S^3$. Then
 the quantity $\Omega(S;\omegas) \in \Z^{\ge 0} \cup \{-\infty\}$ is
 independent of the choice of integral basis of 2-forms $\omegas$, and is a
 diffeomorphism invariant of the pair $(B^4,S)$.
\end{thm}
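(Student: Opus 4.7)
The plan is to establish both independence of $\omegas$ and diffeomorphism invariance, each by reducing to an application of Lemma~\ref{lem:easy-algebra}. The strategy in both cases is the same: identify a naturality relation that compares the relevant cobordism elements via the tensor product of an $\bF$-linear isomorphism on $\HFKh$ with an element of $\GL_{2g}(\Z)$ acting on the $\bF[\R^{2g}]$ factor, then quote Lemma~\ref{lem:easy-algebra}.

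First I would handle independence of $\omegas$. Any two integral bases $\omegas$ and $\omegas'$ of $H^2(B^4 \setminus N(S), S^3 \setminus N(K))$ differ by a matrix $A \in \GL_{2g}(\Z)$, so $\omegas' = A\omegas$. Tracing through the action defined in equation~\eqref{eq:module-action-perturbed}, together with the relation~\eqref{eq:pair-with-twisted} between the totally twisted and perturbed cobordism maps, replacing $\omegas$ by $A\omegas$ amounts to applying $\id_{\HFKh(K)} \otimes \underline{A}$ to $\hat{F}_{B^4,\cS;\omegas}(1)$. By Lemma~\ref{lem:easy-algebra}, $\Omega$ is unchanged.

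Next I would address diffeomorphism invariance. Suppose $\phi \colon (B^4,S) \to (B^4,S')$ is a diffeomorphism; then $\phi|_{S^3}$ carries $K$ to $K'$ and induces a canonical isomorphism $\phi_* \colon \HFKh(K) \to \HFKh(K')$. Also, $\phi$ induces an isomorphism on the relevant pairs of cohomology groups, and if $\omegas$ is an integral basis for $H^2(B^4\setminus N(S),S^3\setminus N(K))$, then $\phi_*\omegas$ is an integral basis for $H^2(B^4 \setminus N(S'), S^3 \setminus N(K'))$. Pick a decoration $\cA$ on $S$ with $g(\cS_{\ws})=0$ and let $\cA' = \phi(\cA)$. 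The diffeomorphism transports Heegaard diagrams, moduli spaces, and the 2-chains $\tilde{\cD}(\phi)$ appearing in the totally twisted map of Lemma~\ref{lem:integrality-totally-twisted}; combining this with naturality of the link cobordism maps from \cite{JCob,ZemCFLTQFT} gives the intertwining relation
\[
\hat{F}_{B^4,\cS';\phi_*\omegas}(1) = (\phi_* \otimes \id)\bigl(\hat{F}_{B^4,\cS;\omegas}(1)\bigr).
\]
Lemma~\ref{lem:easy-algebra} applied to the $\bF$-linear isomorphism $\phi_*$ then yields $\Omega(S';\phi_*\omegas) = \Omega(S;\omegas)$. Combining with the first step, $\Omega(S';\omegas')=\Omega(S;\omegas)$ for any integral bases.

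Finally I would note that any two single-arc decorations $\cA$ of $S$ with $g(\cS_{\ws})=0$ and $g(\cS_{\zs})=g(S)$ are related by an isotopy of the arc on $S$ and a change of the two basepoints on $K$; both operations induce $\bF$-linear isomorphisms on $\HFKh(K)$ that commute with the $\bF[\R^{2g}]$-action, so by Lemma~\ref{lem:easy-algebra} the resulting values of $\Omega$ agree. The main obstacle is verifying the naturality equation in the diffeomorphism step: one must track that pullback of closed 2-forms by $\phi$ commutes with the construction of the perturbed cobordism map through the totally twisted formalism of Section~\ref{sec:totally-twisted}. This is essentially a formal consequence of the construction once the naturality of the untwisted link cobordism maps is in hand, but it requires care to confirm that the 2-chain assignments $h \mapsto e^{(\int_h \omega_i)_i}$ transform correctly under $\phi$.
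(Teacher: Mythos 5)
Your diffeomorphism-invariance step follows the paper's proof exactly: the intertwining relation $\hat{F}_{B^4,\cS';\Phi_*\omegas}(1)=(\Phi|_{(S^3,K)*}\otimes \id)(\hat{F}_{B^4,\cS;\omegas}(1))$ plus Lemma~\ref{lem:easy-algebra}. The problem is in your basis-independence step, and it is a genuine gap. You write $\omegas'=A\omegas$ for $A\in \GL_{2g}(\Z)$, but two integral bases are related by a matrix only at the level of cohomology classes; as differential forms one only has $\omega_j'=\sum_i a_{i,j}\omega_i+d\eta_j$ for some 1-forms $\eta_j$. The perturbed cobordism map is built from the integrals $\int_h\omega_i$ over 2-chains $h\in (C_2/B_2)(W)$ that are \emph{not} cycles, so the exact terms $d\eta_j$ contribute a priori, and $\hat{F}_{B^4,\cS;\omegas'}(1)$ need not be obtained from $\hat{F}_{B^4,\cS;\omegas}(1)$ by $\id\otimes\underline{A}$ (or $\underline{A}^t$ --- the missing transpose is a harmless bookkeeping slip, since $\GL_{2g}(\Z)$ is closed under transposition). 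In particular, even for $A=\id$, i.e.\ two different integral representatives of the \emph{same} basis of classes, your argument says nothing, so this is not cosmetic.

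The paper closes exactly this gap with Lemma~\ref{lem:transform-coefficients}: choosing the primitives $\eta_j$ to vanish near $S^3\setminus N(K)$, and invoking Lemma~\ref{lem:integrality-totally-twisted}, one shows that for any two summands $\xs\otimes e^{h}$ and $\xs'\otimes e^{h'}$ of the totally twisted map the chain $h-h'-\tilde{\cD}(\phi)$ is closed, whence $\int_h d\eta_j=\int_{h'}d\eta_j$. Thus the exact corrections contribute a single overall monomial to $\hat{F}_{B^4,\cS;\omegas'}(1)$, which is invisible to the projectively defined $\Omega$, and only then does the argument reduce to Lemma~\ref{lem:easy-algebra}. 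You need this (or some other argument that $\Omega(S;\omegas)$ depends only on the classes $[\omegas]$) both for basis independence itself and for the last line of your diffeomorphism step, where you pass from $\Omega(S';\Phi_*\omegas)$ to $\Omega(S';\omegas')$.
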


We begin by considering the dependence of $\Omega(S;\omegas)$ on the basis of integral 2-forms $\omegas$:

\begin{lem}\label{lem:transform-coefficients}
Let $S \subset B^4$ be a properly embedded surface of genus $g>0$,
bounding a knot $K$ in $S^3$. Suppose that $\omegas$ and $\omegas'$ are two $2g$-tuples of closed 2-forms
that both induce a basis of
\[
G := H^2(B^4 \setminus N(S), S^3\setminus N(K);\R)\iso \R^{2g},
\]
where $g = g(S)$.
Suppose $f \colon G \to G$ is an automorphism that sends $[\omegas]$ to $[\omegas']$.
Using the bases $\omegas$ and $\omegas'$, the map $f$ induces an automorphism of $\Z^{2g}$,
for which we also write $f$. With respect to these identifications, we have
\[
\hat{F}_{B^4,\cS;\omegas'}(1) \doteq (\id_{\HFKh(K)} \otimes \underline{f}^t) ( \hat{F}_{B^4,\cS;\omegas}(1)),
\]
where $f^t$ denotes the transpose of $f$.
\end{lem}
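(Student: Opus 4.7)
The plan is to lift everything to the totally twisted cobordism map $\underline{F}_{\cW}$ from Section~\ref{sec:totally-twisted}, where $\cW$ denotes the sutured complement of $S$, and reduce the claim to a change of variables on the group ring $\bF[\R^{2g}]$. By equation~\eqref{eq:pair-with-twisted}, both $\hat{F}_{B^4,\cS;\omegas}$ and $\hat{F}_{B^4,\cS;\omegas'}$ are obtained from the \emph{same} totally twisted map by tensoring with the $(C_2/B_2)(W)$-modules $\bF[\R^{2g}]_{\omegas}$ and $\bF[\R^{2g}]_{\omegas'}$, respectively. Thus the entire dependence on the 2-forms is algebraic, encoded in the change of module structure on $\bF[\R^{2g}]$.

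Concretely, writing $\underline{F}_{\cW}(1) = \sum_{\xs} \xs \otimes e^{h_{\xs}}$ with $h_{\xs} \in (C_2/B_2)(W)$, equation~\eqref{eq:pair-with-twisted} yields
\[
\hat{F}_{B^4,\cS;\omegas}(1) = \sum_{\xs} \xs \otimes e^{\ve{a}_{\xs}(\omegas)}, \qquad
\ve{a}_{\xs}(\omegas)_i := \int_{h_{\xs}} \omega_i,
\]
and similarly with $\omegas'$ in place of $\omegas$, using the same chains $h_{\xs}$. Writing $(f_{ij})$ for the matrix of $f$ in the basis $\omegas$, the hypothesis $f([\omega_j]) = [\omega_j']$ gives $[\omega_j'] = \sum_i f_{ij} [\omega_i]$, so the integral 2-form $\sum_i f_{ij}\omega_i$ is cohomologous to $\omega_j'$. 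Replacing $\omega_j'$ by this cohomologous integral representative, the integrals satisfy $\int_{h_{\xs}}\omega_j' = \sum_i f_{ij}\int_{h_{\xs}}\omega_i$; in column-vector form this reads $\ve{a}_{\xs}(\omegas') = f^{t} \ve{a}_{\xs}(\omegas)$, hence $e^{\ve{a}_{\xs}(\omegas')} = \underline{f^{t}}\bigl(e^{\ve{a}_{\xs}(\omegas)}\bigr)$. Summing over $\xs$ gives the claimed identity on the nose for this canonical choice of representatives.

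The main obstacle is justifying that a different integral representative of $[\omegas']$ affects $\hat{F}_{B^4,\cS;\omegas'}(1)$ only by the overall monomial factor encoded in $\doteq$. Replacing $\omega_i'$ by $\omega_i' + d\eta_i$ shifts each $\int_{h_{\xs}}\omega_i'$ by $\int_{\partial h_{\xs}}\eta_i$; by Lemma~\ref{lem:integrality-totally-twisted} (applied with empty initial sutured manifold), the 1-chains $\partial h_{\xs}$ differ for different $\xs$ only by the combination $\gamma_{\xs} - \gamma_{\xs'}$ plus closed 1-chains, and combined with the relative cohomology condition $\omegas \equiv \omegas' \equiv 0$ on $S^3\setminus N(K)$ (so that $\eta_i$ is forced to be closed on the boundary), the $\xs$-dependent variation of $\int_{\partial h_{\xs}}\eta_i$ is at worst an integer shift. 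On the group ring this is multiplication by a single monomial, which is exactly the $\doteq$ slack, completing the reduction from the canonical representatives to arbitrary ones.
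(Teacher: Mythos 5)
Your overall strategy is the same as the paper's: lift to the totally twisted map $\underline{F}_{\cW}$, observe via equation~\eqref{eq:pair-with-twisted} that both perturbed maps are obtained from the same twisted element by integrating different forms over the same chains, and use the matrix of $f$ to get $\ve{a}_{\xs}(\omegas') = f^t\,\ve{a}_{\xs}(\omegas)$ for the cohomologous representative $\sum_i f_{ij}\omega_i$ of $[\omega_j']$. Your first two paragraphs are correct (modulo the cosmetic point that a single $\xs$ may contribute several chains $h_{\xs,i}$), and they match the paper's computation.

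The final step, however, contains a genuine logical error. You conclude that ``the $\xs$-dependent variation of $\int_{\partial h_{\xs}}\eta_i$ is at worst an integer shift,'' and then assert that on the group ring this is multiplication by a single monomial, hence absorbed by $\doteq$. This inference is false: $\doteq$ permits only one overall monomial factor, so if different summands $\xs\otimes e^{h_{\xs}}$ acquired different exponent shifts --- integral or not --- the result would not be a monomial times $(\id\otimes\underline{f}^t)(\hat{F}_{B^4,\cS;\omegas}(1))$. (Nor is ``integer shift'' itself justified: $\eta_i$ is not assumed integral, and the period of a closed $1$-form over a closed $1$-chain is merely a real number.) What is actually needed, and what the paper proves, is that the variation is \emph{independent of the summand}; in fact it vanishes. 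This follows from exactly the ingredients you cite: by Lemma~\ref{lem:integrality-totally-twisted}, $h_{\xs}-h_{\xs'}\pm\tilde{\cD}(\phi)$ is closed for $\phi\in\pi_2(\xs,\xs')$, so $\int_{h_{\xs}}d\eta_i-\int_{h_{\xs'}}d\eta_i=\mp\int_{\tilde{\cD}(\phi)}d\eta_i$, and since $\eta_i$ may be chosen to vanish (not merely be closed) on a neighborhood of $S^3\setminus N(K)$, where the coned-off domain $\tilde{\cD}(\phi)$ is supported, this last integral is $0$. With that correction your argument closes and coincides with the paper's proof.
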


\begin{proof}
Write $\omegas = (\omega_1, \dots, \omega_{2g})$ and
$\omegas' = (\omega_1', \dots, \omega_{2g}')$.
Let $\cW$ be the sutured manifold cobordism complementary to $S$.
Write $f$ as a matrix $(a_{i,j})_{1 \le i,j \le 2g}$, such that
\[
\omega'_j = a_{1,j} \omega_1 + \cdots + a_{2g,j} \omega_{2g} + d \eta_j,
\]
where $\eta_j$ is a 1-form that vanishes on a neighborhood of $S^3 \setminus N(K)$.

We consider the totally twisted cobordism map
\[
\underline{F}_{\cW} \colon \bF \to \HFKh(S^3,K) \otimes_{\bF} \bF[(C_2/B_2)(B^4 \setminus N(S))],
\]
described in Section~\ref{sec:totally-twisted}.

If $(\Sigma,\as,\bs,w,z)$ is a Heegaard diagram for $(S^3,\bK)$, then we can write
\begin{equation}\label{eq:expand-totally-twisted-map}
\underline{F}_{\cW}(1) = \sum_{\xs \in \bT_{\a} \cap \bT_{\b}}
\xs \otimes \left(\sum_{i=1}^{n_{\xs}} e^{h_{\xs,i}} \right),
\end{equation}
where $h_{\xs, i}\in(C_2/B_2)(B^4 \setminus N(S))$,
and $n_{\xs} \in \Z^{\ge 0}$.
The perturbed map $F_{\cW;\omegas}$ is given by tensoring with $1\in \bF[\R^{2g}]_{\omegas}$, so equation~\eqref{eq:expand-totally-twisted-map} becomes
\begin{equation}
F_{\cW;\omegas}(1)=\sum_{\xs \in \bT_{\a} \cap \bT_{\b}}
\xs \otimes \left(\sum_{i=1}^{n_{\xs}} e^{(\int_{h_{\xs,i}} \omega_1,\dots, \int_{h_{\xs,i}} \omega_{2g})}  \right), \label{eq:perturbed-from-totally-twisted}
\end{equation}
and
\begin{equation}
F_{\cW;\omegas'}(1)=\sum_{\xs \in \bT_{\a} \cap \bT_{\b}}
\xs \otimes \left(\sum_{i=1}^{n_{\xs}} e^{(\int_{h_{\xs,i}} \omega'_1,\dots, \int_{h_{\xs,i}} \omega'_{2g})}  \right). \label{eq:perturbed-from-totally-twisted-w'}
\end{equation}

Let us write
\[
f_0(\omega_j) := \sum_{i=1}^{2g} a_{i,j} \omega_{i},
\]
so $f_0(\omega_j) = \omega_j'- d \eta_j$.
Equation~\eqref{eq:perturbed-from-totally-twisted} gives
\begin{equation}
\begin{split}
(\id \otimes \underline{f}^t)(F_{\cW;\omegas}(1))&=\sum_{\xs \in \bT_{\a} \cap \bT_{\b}}
\xs \otimes \left(\sum_{i=1}^{n_{\xs}} e^{f^t(\int_{h_{\xs,i}} \omega_1,\dots, \int_{h_{\xs,i}} \omega_{2g})}  \right)\\
&=\sum_{\xs \in \bT_{\a} \cap \bT_{\b}}
\xs \otimes \left(\sum_{i=1}^{n_{\xs}} e^{(\int_{h_{\xs,i}} f_0(\omega_1),\dots, \int_{h_{\xs,i}} f_0(\omega_{2g}))}  \right).
\end{split}
\label{eq:manipuate-perturbed-omega}
\end{equation}

The main claim is that equations~\eqref{eq:perturbed-from-totally-twisted-w'} and \eqref{eq:manipuate-perturbed-omega}
agree up to an overall factor of $e^{\ve{a}}$ for some $\ve{a} \in \R^{2g}$.
We will show that, if $ \xs \otimes e^{h}$ and $\xs' \otimes e^{h'}$
are summands of $\underline{F}_{\cW}(1)$, then
\begin{equation}\label{eq:evaluation-Kx-ind-of-x}
\int_{h} d\eta_j=\int_{h'} d\eta_j.
\end{equation}
Indeed, Lemma~\ref{lem:integrality-totally-twisted} shows that,
if $\phi \in \pi_2(\xs,\xs')$ is a disk on $(\Sigma,\as,\bs,w,z)$, then the 2-chain
\[
c := h - h' - \tilde{\cD}(\phi)
\]
is closed, and so $\int_c d \eta_j = 0$.
As $\eta_{j}$ vanishes on a neighborhood of $S^3 \setminus N(K)$, we have
\[
\int_{h} d\eta_j = \int_{h'} d \eta_j.
\]
Equation~\eqref{eq:evaluation-Kx-ind-of-x} follows,
and hence so does the main result.
\end{proof}

Combining Lemma~\ref{lem:easy-algebra} with Lemma~\ref{lem:transform-coefficients},
we obtain the following:

\begin{cor}\label{cor:number-factors-ind-omegas}
Let $K$ be a knot in $S^3$.
If $S$ is a smooth, genus $g>0$ surface in $B^4$ bounding $K$, then $\Omega(S;\omegas)$
is independent of the integral basis of 2-forms $\omegas$. We henceforth write just $\Omega(S)$.
\end{cor}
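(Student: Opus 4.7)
The plan is to show that the corollary follows essentially immediately from combining Lemma~\ref{lem:transform-coefficients} and Lemma~\ref{lem:easy-algebra}, once we make the elementary observation that any two integral bases of the cohomology group $G := H^2(B^4 \setminus N(S), S^3 \setminus N(K); \R)$ differ by a change of basis in $\GL_{2g}(\Z)$.

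First I would fix two integral bases of closed 2-forms $\omegas$ and $\omegas'$ for $G$ and let $f \colon G \to G$ be the automorphism taking $[\omegas]$ to $[\omegas']$. Since each basis represents classes in the image of $H^2(B^4 \setminus N(S), S^3 \setminus N(K); \Z)$ (modulo torsion) and both span this integral lattice, the matrix of $f$ with respect to either basis lies in $\GL_{2g}(\Z)$. Next, I would invoke Lemma~\ref{lem:transform-coefficients} to obtain
\[
\hat{F}_{B^4,\cS;\omegas'}(1) \doteq (\id_{\HFKh(K)} \otimes \underline{f}^t)\bigl(\hat{F}_{B^4,\cS;\omegas}(1)\bigr),
\]
where $\doteq$ denotes equality up to multiplication by a monomial in $\bF[\R^{2g}]$. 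By Proposition~\ref{prop:integrality}, the right-hand side is built from a projectively integral element.

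Then I would apply Lemma~\ref{lem:easy-algebra}, with $M_0 = \HFKh(K)$, $\phi = \id_{\HFKh(K)}$, and $f^t \in \GL_{2g}(\Z)$, to conclude that
\[
\Omega_{\HFKh(K) \otimes \bF[\Z^{2g}]}\bigl((\id \otimes \underline{f}^t)(\hat{F}_{B^4,\cS;\omegas}(1))\bigr) = \Omega_{\HFKh(K) \otimes \bF[\Z^{2g}]}\bigl(\hat{F}_{B^4,\cS;\omegas}(1)\bigr),
\]
which by definition is $\Omega(S;\omegas)$. Finally, I would note that the ambiguity $\doteq$ (multiplication by a monomial $e^{\ve{a}} \in \bF[\R^{2g}]$) does not affect the value of $\Omega$ on projectively integral elements, since $\Omega$ is defined precisely to be invariant under such multiplication (via the choice of monomial $m$ in its definition). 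This yields $\Omega(S;\omegas') = \Omega(S;\omegas)$, justifying the notation $\Omega(S)$.

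There is no real obstacle here: the substantive input was already extracted in Lemma~\ref{lem:transform-coefficients} (transforming the cobordism map under a change of 2-form basis) and Lemma~\ref{lem:easy-algebra} (invariance of $\Omega$ under $\GL_n(\Z)$-actions). The only detail to watch is the bookkeeping between the geometric change of basis $f$ and its algebraic effect $\underline{f}^t$ on the coefficient ring, together with the fact that the $\doteq$ ambiguity is absorbed by the very definition of $\Omega$ on projectively integral elements.
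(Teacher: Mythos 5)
Your proposal is correct and follows exactly the paper's route: the paper derives this corollary precisely by combining Lemma~\ref{lem:transform-coefficients} with Lemma~\ref{lem:easy-algebra}, and your added bookkeeping (the change-of-basis matrix lying in $\GL_{2g}(\Z)$, and the $\doteq$ ambiguity being absorbed by the definition of $\Omega$ on projectively integral elements) is the right way to fill in the details the paper leaves implicit.
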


We now prove that $\Omega(S)$ is a diffeomorphism invariant:

\begin{proof}[Proof of Theorem~\ref{thm:diffeomorphism-invariant}]
Suppose that $S$ and $S'$ are two genus $g > 0$ surfaces in $B^4$ that bound a knot $K$ in $S^3$,
and $\Phi \colon (B^4,S) \to (B^4,S')$ is a diffeomorphism. Let $\Phi'$ denote the restriction of $\Phi$
to the complement of $S$. Let $\cS$ and $\cS'$ denote $S$ and $S'$ decorated with a single dividing arc,
with $\ws$-subsurfaces both equal to a bigon.
Diffeomorphism invariance of the cobordism maps implies that
\begin{equation}\label{eq:diffeomorphism-invariance}
\hat{F}_{B^4, \cS'; \Phi'_*\omegas}(1) = (\Phi|_{(S^3,K)*} \otimes \id_{\bF[\R^{2g}]})
\left( \hat{F}_{B^4, \cS; \omegas}(1)\right) .
\end{equation}
Hence $\Omega(S') = \Omega(S)$, by Lemma~\ref{lem:easy-algebra}.
\end{proof}

\subsection{Constructing non-diffeomorphic families of surfaces}

\begin{thm}\label{thm:not-diffeomorphic}
Suppose that $S \subset B^4$ is a properly embedded surface, $\g \subset S$ is a
homologically nontrivial simple closed curve, and $C$ is a self-concordance of a knot $J$
that has nontrivial knot Floer Lefschetz polynomial $\Lef_z(C)$.
If $\hat{F}_{B^4, \cS} \neq 0$, where $\cS$ is a decoration of $S$ such that $\cS_{\ws}$ is a bigon
and $\cS_{\zs}$ is a genus $g(S)$ subsurface,
then $(B^4,S)$ and $(B^4,S(\g,C))$ are not diffeomorphic.

More generally, if $\{C_n \colon n \in \N\}$ is a set of self-concordances
such that $\Lef_z(C_n)$ and $\Lef_z(C_m)$ have a different number of irreducible factors for $n \neq m$,
then $(B^4, S(\g, C_n))$ are pairwise non-diffeomorphic.
\end{thm}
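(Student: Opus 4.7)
The plan is to leverage the diffeomorphism invariant $\Omega$ constructed in Theorem~\ref{thm:diffeomorphism-invariant} together with the rim surgery formula of Theorem~\ref{thm:concordance-rim-surgery}. The strategy is to show that $\Omega(S(\gamma,C)) - \Omega(S)$ equals the number of irreducible factors of $\Lef_z(C)$, so any difference in irreducible factor counts obstructs the existence of a diffeomorphism of pairs.

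To set up, I would use the geometric description of the isomorphism $H^2(B^4\setminus N(S),S^3\setminus N(K);\R)\cong H^1(S,\partial S;\R)$ recalled at the start of Section~5, together with Poincar\'e--Lefschetz duality $H^1(S,\partial S)\cong H_1(S)$. Since $[\gamma]\in H_1(S)$ is nontrivial by hypothesis, I extend it to a basis of $H_1(S;\Z)$ and take the dual integral basis $\omegas=(\omega_1,\dots,\omega_{2g})$ of 2-forms. Under these identifications the pairing $\int_{T_\gamma}\omega_i$ equals $\langle\omega_i,[\gamma]\rangle=\delta_{i1}$, so with this choice $Z(\gamma,\omegas)=z_1$.

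Now Theorem~\ref{thm:concordance-rim-surgery} specializes to
\[
\hat{F}_{B^4,S(\gamma,C);\omegas}(1) = \Lef_{z_1}(C)\cdot \hat{F}_{B^4,S;\omegas}(1).
\]
Because $\hat{F}_{B^4,\cS}\neq 0$, its perturbed lift is also non-zero (specialize the variables $z_i\mapsto 1$ to recover the unperturbed map), so $\Omega(S)\in\Z^{\geq 0}$. Applying the additivity in equation~\eqref{eq:additivity-of-Omega} to the free $\bF[\Z^{2g}]$-module $\HFKh(K)\otimes\bF[\Z^{2g}]$ then yields
\[
\Omega(S(\gamma,C)) = \Omega(S) + \Omega_{\bF[\Z^{2g}]}(\Lef_{z_1}(C)).
\]
Since $\Lef_{z_1}(C)$ is a Laurent polynomial in the single variable $z_1$, a standard Gauss-lemma argument shows that each of its irreducible factors in $\bF[z_1^{\pm 1}]$ remains prime in the multivariable Laurent ring $\bF[\Z^{2g}]=\bF[z_1^{\pm 1},\dots,z_{2g}^{\pm 1}]$, so $\Omega_{\bF[\Z^{2g}]}(\Lef_{z_1}(C))$ equals the number $\Irr(\Lef_z(C))$ of irreducible factors of $\Lef_z(C)\in\bF[z^{\pm 1}]$.

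With the identity $\Omega(S(\gamma,C))=\Omega(S)+\Irr(\Lef_z(C))$ in hand, both conclusions follow from Theorem~\ref{thm:diffeomorphism-invariant}. For the first assertion, nontriviality of $\Lef_z(C)$ gives $\Irr(\Lef_z(C))\geq 1$, so $\Omega(S)<\Omega(S(\gamma,C))$; for the family case, pairwise distinct values of $\Irr(\Lef_z(C_n))$ yield pairwise distinct values of $\Omega(S(\gamma,C_n))$. I expect the main obstacle to be bookkeeping rather than anything conceptual: correctly identifying the pairing $\int_{T_\gamma}(\,\cdot\,)$ of integral 2-forms with the class $[\gamma]\in H_1(S)$ so as to realize $Z(\gamma,\omegas)=z_1$, and confirming that the irreducible factor count of a one-variable polynomial is preserved under the inclusion $\bF[z^{\pm 1}]\hookrightarrow\bF[\Z^{2g}]$.
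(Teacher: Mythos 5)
Your proposal is correct and follows essentially the same route as the paper: apply the rim surgery formula of Theorem~\ref{thm:concordance-rim-surgery}, use nonvanishing of $\hat{F}_{B^4,\cS}$ to get $\Omega(S)\ge 0$, and invoke additivity of $\Omega$ together with Theorem~\ref{thm:diffeomorphism-invariant}. The only difference is cosmetic: the paper works with a general integral basis and only notes that $Z(\g,\omegas)$ is a non-constant monomial, whereas you normalize so that $Z(\g,\omegas)=z_1$ and spell out the (correct) Gauss-lemma step showing the irreducible factor count is preserved under $\bF[z^{\pm 1}]\hookrightarrow\bF[\Z^{2g}]$, a point the paper leaves implicit.
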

\begin{proof}  Let $\omegas$ denote a collection of closed, integral 2-forms inducing a basis of
\[
G := H^2(B^4 \setminus N(S), S^3\setminus N(K);\R),
\]
and let $\omegas'$ denote a corresponding basis
of integral 2-forms inducing a basis of
\[
H^2(B^4 \setminus N(S(\g,C)), S^3 \setminus N(K);\R).
\]
By Theorem~\ref{thm:concordance-rim-surgery}, we have
\[
\hat{F}_{B^4, S(\g,C); \omegas'}(1) = \Lef_{Z(\g, \omegas)}(C) \cdot \hat{F}_{B^4, S; \omegas}(1).
\]

Since $\g$ is homologically non-trivial in $S$, the rim torus $T_\g$ is homologically
non-trivial in $B^4 \setminus N(S)$. As $\omegas$ induces a basis of $G$, the monomial $Z(\g, \omegas)$
is non-constant. Hence, since $\Lef_z(C)$ is non-trivial, so is $\Lef_{Z(\g, \omegas)}(C)$.
Since $\hat{F}_{B^4,\cS}\neq 0$, we have $\Omega(S)\ge 0$. Hence, Theorem~\ref{thm:concordance-rim-surgery} and equation~\eqref{eq:additivity-of-Omega} imply that if
 $\Lef_{Z(\g, \omegas)}(C) \not \doteq 1$, then
\begin{equation}\label{eq:inequality-irr-factors}
\Omega(S(\g,C)) >
\Omega ( S)\ge 0.
\end{equation}
Theorem~\ref{thm:diffeomorphism-invariant} implies that $S(\g,C)$ and $S$ are not diffeomorphic. This completes the proof of the first claim.

The second part follows similarly, as $\Omega(S(\g,C_n))$
are pairwise distinct.
\end{proof}

\section{Quasipositive knots, ascending surfaces, and Weinstein cobordisms}
\label{sec:constructible}

In this section, we provide some background on quasipositive knots, and ascending surfaces
in Weinstein cobordisms.

\subsection{Quasipositive links and braids}
\label{sec:background-quasipositive}
The braid group on $n$ strands has the presentation
\[
B_n=\langle\, \sigma_1,\dots, \sigma_{n-1} | \sigma_i \sigma_{i+1} \sigma_i = \sigma_{i+1} \sigma_i \sigma_{i+1},
\sigma_i \sigma_j = \sigma_j \sigma_i \text{ for } i-j \ge 2 \,\rangle,
\]
where $\sigma_i$ is the braid of a positive crossing between strands $i$ and $i+1$.

A link is \emph{positive} if it has only positive crossings.
A link is \emph{quasipositive} if it is isotopic to the closure
of a braid which can be represented as the product
\begin{equation}
\cW=\prod_{i=1}^k w_i \sigma_{j_i} w_i^{-1},\label{eq:quasipositive-def}
\end{equation}
for some $w_i\in B_n$. Finally, a link is \emph{strongly quasipositive} if
it is the closure of a braid that can be presented as a product of factors of the form
\begin{equation}\label{eq:strongly-quasipositive-def}
(\sigma_i\cdots \sigma_{j-2})(\sigma_{j-1})(\sigma_{i}\cdots \sigma_{j-2})^{-1},
\end{equation}
for $i \le j-2$.
Rudolph~\cite{Rudolph-pos-quasipos} proved that all positive links are strongly quasipositive.

If $K$ is a quasipositive link, which is the closure of a word $\cW$ with
presentation as in equation~\eqref{eq:quasipositive-def}, then there is an
induced link cobordism from the empty link to $K$. The link cobordism starts
with $n$ births, which give an unlink of $n$ components, which we view as the
closure of a trivial $n$-stranded braid. Each factor of the form $w_i
\sigma_{j_i} w_i^{-1}$ determines an isotopy, followed by a saddle. We call
this the \emph{quasipositive link cobordism} of the word $\cW$.

If each factor of $\cW$ has the form of equation~\eqref{eq:strongly-quasipositive-def},
then it gives rise to a Seifert surface for $K$ that we call
the \emph{quasipositive Seifert surface} of the word $\cW$. This is isotopic to the quasipositive link
cobordism of $\cW$ relative to $K$.

\subsection{Weinstein manifolds and ascending surfaces}

In this section, we recall background on Weinstein manifolds and a natural
family of smooth embedded surfaces therein, called \emph{ascending surfaces}.
See \cite{CEStein->Weinstein} for additional background. Our exposition
mostly  follows Hayden \cite{HaydenSteinSurfaces}.

\begin{define}
If $(X,J)$ is a complex manifold, we say that a map $\phi \colon X \to \R$
is \emph{$J$-convex} if the 2-form $\omega_\phi := -d (d^\C\phi)$ is
symplectic, where $d^\C \phi = d \phi \circ J$.  A compact, complex manifold
$(X,J)$ is called \emph{Stein} if it admits a $J$-convex function which is
exhausting (i.e., proper and bounded from below). If $X$ is compact and has
boundary $-Y_0\cup Y_1$,  we say that $(X,J)$ is a \emph{Stein cobordism}
from $Y_0$ to $Y_1$ if it admits a $J$-convex function $\phi \colon X \to [0,1]$
such that $0$ and $1$ are regular values, and $\phi^{-1}(0)=Y_0$ and $\phi^{-1}(1) = Y_1$.
Then $\xi_i := TY_i \cap J(TY_i)$ is a contact structure on $Y_i$ for $i \in \{0,1\}$.
\end{define}

\begin{define}
A \emph{Weinstein manifold} is a tuple $(X,\omega,\phi,V)$, where
$(X,\omega)$ is a symplectic manifold, $\phi$ is an exhausting Morse
function, and $V$ is a Liouville vector field (i.e., $\cL_V \omega = \omega$)
which is \emph{gradient-like} for $\phi$.
\end{define}

If $(X,J)$ is a Stein manifold with $J$-convex Morse function $\phi$,
then $(X, \omega_\phi, \phi, V_\phi)$ is a Weinstein manifold,
where $V_\phi$ is the gradient of $\phi$ with respect to the metric
$\langle X, Y \rangle := \omega_\phi(X, JY)$.
We will be interested in link cobordisms in Weinstein manifolds which satisfy the following property.

\begin{define}
A smoothly embedded surface $S$ in a Weinstein manifold $(X,\omega,\phi, V)$
is called \emph{ascending} if $S$ contains no critical points of $\phi$,
the restriction $\phi|_{S}$ is Morse, and $d\phi\wedge i_V \omega|_{S}>0$
away from the critical points of $\phi|_S$.
\end{define}

The definition is due to  Boileau--Orevkov
\cite{Boileau-Orevkov}*{D\'{e}finition 2} when $X=B^4$, and Hayden
\cite{HaydenSteinSurfaces}*{Definition~4.1} for general Stein and Weinstein
$X$.  We could equivalently require each regular level set
$\phi|_{S}^{-1}(c)$ to be a positive transverse link in the hypersurface $\phi^{-1}(c)$
with the contact form $i_V \omega$.

The tangent spaces at the critical points of ascending surfaces in Stein
manifolds are always $J$-complex lines; see Boileau--Orevkov
\cite{Boileau-Orevkov}*{p.~828} and Hayden
\cite{HaydenSteinSurfaces}*{Proposition~4.10}.  Hayden's proof adapts to show
that if $p\in S$ is a critical point of $\phi|_S$, then $i_V \omega$
restricts trivially to $T_p S$, which implies that $\omega$ restricts
non-trivially to $\Lambda^2 T_p S$.

\begin{define}
If $S$ is an ascending surface in a Weinstein manifold $(W,\omega,\phi,V)$,
we say that a critical points $p \in S$ of $\phi|_S$ is \emph{positive} (resp.~\emph{negative})
if the symplectic form restricts positively (resp.~negatively) to $T_p S$.
\end{define}

If $S$ is an ascending surface in a Stein manifold $(X,J)$ with $J$-convex Morse function $\phi$,
then a critical point $p \in S$ of $\phi|_S$ is positive precisely when
the orientation of $T_p S$ coincides with the complex orientation from $J$.
If $S$ is a $J$-holomorphic curve in a Stein manifold $(X,J)$, then $S$ is an ascending surface
with respect to a generic $J$-convex $\phi$ \cite{HaydenSteinSurfaces}*{Proposition~4.11}.

The following lemma seems to be well-known; however, we have been unable to locate a reference.

\begin{lem}\label{lem:ascending-surface}
Suppose that $(X,\omega, \phi,V)$ is Weinstein.
Suppose further that $S\subset X$ is an ascending surface and $p$ is a critical point
of $\phi|_S$. If $p$ has index 0, then $p$ is positive.
If $p$ has index 2, then $p$ is negative.
\end{lem}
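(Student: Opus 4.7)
The plan is to compute everything in Morse coordinates for $\phi|_S$ near $p$ and then read off the sign of $\omega|_{T_pS}$ from the leading-order expansion of the ascending condition. By the Morse Lemma, I choose orientation-preserving local coordinates $(x,y)$ on $S$ around $p$ in which $\phi|_S - \phi(p) = \epsilon_1 x^2 + \epsilon_2 y^2$, with $(\epsilon_1,\epsilon_2) = (+,+)$ if $p$ has index $0$ and $(-,-)$ if $p$ has index $2$. With these coordinates, the sign of $p$ is the sign of the scalar $\mu$ for which $\omega|_{T_pS} = \mu\,dx\wedge dy$.

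Next I invoke the Liouville form $\lambda := i_V\omega$, which satisfies $d\lambda = \omega$ because $\cL_V\omega = \omega$ and $d\omega = 0$. The discussion preceding the lemma establishes that $\lambda|_{T_pS} = 0$, so the Taylor expansion of $\lambda|_S$ at $p$ begins in linear order; write its linear part as $\alpha^{(1)} = (ax + by)\,dx + (cx + dy)\,dy$. A direct computation gives $d\alpha^{(1)} = (c - b)\,dx \wedge dy$, and this 2-form equals $\omega|_{T_pS}$ at $p$. Thus the sign of $p$ is the sign of $c - b$.

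Finally, I extract constraints on $b$ and $c$ from the ascending condition $d\phi \wedge \lambda|_S > 0$ on $S\setminus\{p\}$. To leading order one computes
\[
d(\phi|_S) \wedge \alpha^{(1)} = \bigl(2\epsilon_1 c\, x^2 + 2(\epsilon_1 d - \epsilon_2 a)\, xy - 2\epsilon_2 b\, y^2\bigr)\,dx\wedge dy,
\]
and strict positivity on a punctured neighborhood of the origin forces this quadratic form to be positive definite. Positive definiteness requires $\epsilon_1 c > 0$ and $-\epsilon_2 b > 0$. In the index $0$ case $(\epsilon_1 = \epsilon_2 = 1)$, this gives $c > 0$ and $b < 0$, so $c - b > 0$ and $p$ is positive; in the index $2$ case $(\epsilon_1 = \epsilon_2 = -1)$, it gives $c < 0$ and $b > 0$, so $c - b < 0$ and $p$ is negative.

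The main subtlety is the positive-definiteness step: since both $d\phi$ and $\lambda|_S$ vanish at $p$, the 2-form $d\phi \wedge \lambda|_S$ vanishes to second order, so ascending positivity a priori constrains only its leading quadratic coefficient. Merely positive semi-definite would be insufficient, but along any null direction the next term in the expansion is cubic and must change sign across $p$, contradicting strict positivity on a punctured neighborhood. As a byproduct, this also recovers the fact --- quoted before the lemma --- that $\omega|_{T_pS}\ne 0$ in the index $0$ and index $2$ cases, since $c$ and $b$ are forced to have opposite signs.
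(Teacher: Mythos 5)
Your route is genuinely different from the paper's. The paper argues semi-globally: after normalizing $\phi(p)=-\epsilon$, it integrates the Leibniz identity $d(\phi\, i_V\omega)=d\phi\wedge i_V\omega+\phi\,\omega$ over the component $D_\epsilon$ of $\phi|_S^{-1}([-\epsilon,0])$ containing $p$, applies Stokes' theorem using $\phi=0$ on $\d D_\epsilon$ to get $\int_{D_\epsilon}d\phi\wedge i_V\omega+\int_{D_\epsilon}\phi\,\omega=0$, and reads off the sign of $\omega$ near $p$ from the signs of the two integrands. Your pointwise Taylor expansion in Morse coordinates is a legitimate alternative, and the two key identifications in it are correct: $\omega|_{T_pS}=d\alpha^{(1)}|_p=(c-b)\,dx\wedge dy$, and the displayed quadratic form is indeed the leading term of $d(\phi|_S)\wedge (i_V\omega)|_S$.

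The positive-definiteness step, however, contains a genuine gap. Strict positivity of a smooth function on a punctured neighborhood of the origin does \emph{not} force its leading quadratic form to be positive definite ($x^2+y^4$ is the standard counterexample), and your patch fails for exactly this reason: along a null direction of $Q$ the cubic term of the expansion may vanish, in which case a positive quartic term preserves positivity and no sign change occurs. Your argument therefore only yields $\epsilon_1 c\ge 0$ and $-\epsilon_2 b\ge 0$, hence $c-b\ge 0$ in the index~$0$ case (and $c-b\le 0$ in the index~$2$ case), not the strict inequality; in particular the claimed ``byproduct'' that $\omega|_{T_pS}\neq 0$ is not actually established, and using it to close the argument would be circular. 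The repair is immediate and is exactly what the paper does: take as an input the fact, recorded just before the lemma (an adaptation of Hayden's argument), that $\omega$ restricts \emph{non-trivially} to $T_pS$, i.e.\ $c-b\neq 0$. Semidefiniteness of $Q$ then gives the weak inequality on $c-b$, and nonvanishing upgrades it to the strict one. With that single substitution your proof is complete.
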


\begin{proof}
We focus on the case when $p$ has index 0, since the argument is symmetric
when $p$ has index~2. Let $\epsilon>0$ be small. Note that shifting $\phi$ by
an overall constant preserves the Weinstein condition on $\phi$, and the
ascending condition on $S$, so assume $\phi(p)=-\epsilon$. Let
$D_\epsilon\subset S$ denote the component of $\phi|_S^{-1}([-\epsilon,0])$
containing $p$. Note that $\phi = 0$ on $\d D_\epsilon$.

Recall that the ascending condition on $S$ is equivalent to $\phi|_{S}$ being
Morse, and $d \phi\wedge i_V \omega|_S>0$ away from the critical points.
As we saw above, an easy adaptation of \cite{HaydenSteinSurfaces}*{Proposition~4.5}
gives that $\omega$ restricts non-trivially to $T_p S$.
Hence, it suffices to compute the sign.

Since $\phi=0$ on $\d D_\epsilon$, Stoke's theorem gives
\begin{equation}\label{eq:integral=0}
\int_{D_\epsilon} d(\phi i_V\omega)=\int_{\d D_\epsilon} \phi i_V\omega=0.
\end{equation}
On the other hand, the Leibniz rule gives
\begin{equation}\label{eq:Leibniz}
d (\phi i_V \omega)=d \phi\wedge i_V \omega+\phi \omega,
\end{equation}
since $d i_V \omega=\omega$ by $\cL_V \omega = \omega$.
Hence, combining equations~\eqref{eq:integral=0} and~\eqref{eq:Leibniz}, we obtain
\[
0 = \int_{D_\epsilon} d(\phi i_V \omega) =
\int_{D_\epsilon} d \phi\wedge i_V \omega + \int_{D_\epsilon} \phi \omega .
\]
Since $d\phi \wedge i_V \omega|_S > 0$ away from $p$,
we conclude that $\int_{D_\epsilon} \phi \omega < 0$.
Since $\phi\le 0$ near $p$, we conclude that $\omega$ must be positive near $p$.
\end{proof}

\subsection{Open books and transverse knots}

We begin with some background on open books and transverse knots in contact manifolds.

\begin{define}
\begin{enumerate}[label=($\cO$-\arabic*)]
\item An \emph{abstract open book} $\cO$ is a pair $(\page, h)$,
    where $\page$ is a compact, oriented surface with boundary,
    and $h\colon \page\to \page$ is an automorphism such that $h|_{\d \page}=\id$.
    Note that $\cO$ determines a 3-manifold $Y_{\cO} = (\page\times I)/{\sim}$, where
    $(x,1) \sim (h(x),0)$ for all $x \in \page $, and $ (x,t) \sim (x, t')$
    whenever $t$, $t' \in I$ and  $x \in \d \page$.
\item An \emph{embedded open book} $(\page, h, \phi)$ of $Y$ consists of
    an abstract open book $\cO = (\page, h)$, and a diffeomorphism
    \[
    \varphi \colon Y_{\cO}\to Y.
    \]
    We say that $(\page, h, \phi)$ is \emph{compatible} with a co-oriented
    contact structure $\xi$ if $\xi$ is isotopic to a contact structure
    which is the kernel of a 1-form $\a$ such that $d\a > 0$ on the interior
    of the pages of the open book, and $\a > 0$ on the binding.
\end{enumerate}
\end{define}

\begin{define}\label{def:transverse}
\begin{enumerate}[label=($\cT$-\arabic*)]
\item Suppose that $(Y,\xi)$ is a co-oriented contact 3-manifold.
    A link $L$ in $(Y,\xi)$ is \emph{transverse} if $T_p L$
    is transverse to $\xi_p$ for all $p \in L$. If $L$ is oriented,
    we say that $L$ is a \emph{positive} transverse link if $L$ is positively transverse to $\xi$.
\item An \emph{abstract, pointed open book} $\cO$ is a tuple $(\page,h,\ve{p})$,
    where $(\page, h)$ is an open book, and $\ve{p} \subset \Int(\page)$
    is a finite collection of points which is fixed by $h$ setwise.
    Note that a pointed open book determines a 3-manifold $Y_{\cO}$ containing a link $L_\cO$.
\item If $\bL = (L,\ws,\zs)$ is a multi-pointed link in a 3-manifold $Y$,
    we say an \emph{embedded, pointed open book} of $(Y,\bL)$ is a tuple
    $(\page, h, \ve{p}, \phi)$ such that $(\page, h, \phi)$ is
    an embedded open book for $Y$, and
\begin{enumerate}
\item $\varphi(\ve{p}\times I) = L$,
\item $\varphi(\ve{p}\times \{1/2\}) = \ws$ and $\varphi(\ve{p}\times \{0\}) = \zs$.
\end{enumerate}
\item\label{it:compat} If $\bL = (L,\ws,\zs)$ is a multi-pointed, transverse link in $(Y,\xi)$,
    we say that an embedded, pointed open book for $(Y,\bL)$ is \emph{compatible}
    with $\xi$ if there is an isotopy of contact 1-forms $\a_t$ such that
\begin{enumerate}
\item $\xi = \ker \a_0$,
\item $\a_t > 0$ on $L$ for each $t$,
\item $d \a_1 > 0$ on the pages, and $\a_1 > 0$ on the binding.
\end{enumerate}

\end{enumerate}
\end{define}

\begin{define}\label{def:arcs}
Suppose that $\cO = (\page, h, \ve{p})$ is a pointed open book.
A \emph{basis of arcs} of $\cO$ consists of a collection of pairwise disjoint arcs $\ve{a} = (a_1, \dots, a_n)$,
such that each component of $\page  \setminus \ve{a}$ is a disk containing exactly one point of $\ve{p}$.
\end{define}

\subsection{Partial open books and transverse knots}
\label{sec:partial-open-books}

In this section, extending work of Stipsicz and V\'ertesi~\cite{StipsiczVertesiEH-LOSS}
for doubly-pointed transverse knots, we construct a contact structure on the sutured manifold complementary
to a multi-pointed transverse link by attaching bypasses to the complement of a Legendrian approximation,
and relate the corresponding partial open books. We will use this for showing the naturality
of the transverse link invariants in Section~\ref{sec:naturality}.

The following definition is due to Honda, Kazez, and Mati\'c~\cite{HKMSutured}:

\begin{define}
An \emph{abstract partial open book} $\cO$ is a triple $\cO = (\page, P, h)$, where
$\page$ is a compact, connected, oriented surface with non-empty boundary,
$P \subset \page$ is a compact subsurface such that $\page$
is obtained from $\cl(\page \setminus P)$ by successively attaching 1-handles,
and $h \colon P \to \page$ is an embedding such that $h|_{P\cap \d \page } = \id$.
\end{define}

A partial open book $\cO$ naturally determines a 3-manifold manifold
$M_{\cO} = (\page \times I)/{\sim}$, where $(x,1)\sim (h(x),0)$ for $x\in P$,
and $(x,t) \sim (x,t')$ if $x \in \d \page \cap \d P$ and $t$, $t' \in I$, with
sutures $\g_{\cO} = \d(\page \setminus P) \times \{1\}$. If $(M,\g)$ is a sutured
manifold, then an \emph{embedded} partial open book is a partial open book
$\cO$ equipped with a diffeomorphism $\varphi \colon (M_{\cO}, \g_{\cO}) \to (M, \g)$.

If $\bL=(L,\ve{w},\ve{z})$ is a multi-pointed link in $Y$, we will write
$Y(\bL)$ for the complementary sutured manifold obtained by removing a neighborhood of $L$,
and adding meridional sutures corresponding to the basepoints of $\bL$.

Suppose that $\bL=(L,\ws,\zs)$ is a multi-pointed transverse link in $(Y,\xi)$. Let $\cL$ be a
Legendrian approximation of $L$. Choose a standard neighborhood $N(\cL)$ of $\cL$, and write
$\xi_0 = \xi|_{Y \setminus N(\cL)}$. The dividing set on $\d (Y \setminus N(\cL))$
consists of two copies of the contact longitude on each component of $L$.

Extending the construction of Stipsicz and V\'ertesi~\cite{StipsiczVertesiEH-LOSS}
from doubly-pointed to multi-pointed knots, we attach a collection of bypasses
to $\xi_0$ along $\d (Y \setminus N(\cL))$ to obtain a contact structure
$\xi_{\bL}^{\pitchfork}$ on $Y(\bL)$. If $K$ is a component of $\bL$
that has $2n$ basepoints, then we attach $n$ bypasses along $\d N(K)$, as in Figure~\ref{fig:19}.

\begin{figure}[ht!]
	\centering
\begingroup%
  \makeatletter%
  \providecommand\color[2][]{%
    \errmessage{(Inkscape) Color is used for the text in Inkscape, but the package 'color.sty' is not loaded}%
    \renewcommand\color[2][]{}%
  }%
  \providecommand\transparent[1]{%
    \errmessage{(Inkscape) Transparency is used (non-zero) for the text in Inkscape, but the package 'transparent.sty' is not loaded}%
    \renewcommand\transparent[1]{}%
  }%
  \providecommand\rotatebox[2]{#2}%
  \newcommand*\fsize{\dimexpr\f@size pt\relax}%
  \newcommand*\lineheight[1]{\fontsize{\fsize}{#1\fsize}\selectfont}%
  \ifx\svgwidth\undefined%
    \setlength{\unitlength}{150.0085582bp}%
    \ifx\svgscale\undefined%
      \relax%
    \else%
      \setlength{\unitlength}{\unitlength * \real{\svgscale}}%
    \fi%
  \else%
    \setlength{\unitlength}{\svgwidth}%
  \fi%
  \global\let\svgwidth\undefined%
  \global\let\svgscale\undefined%
  \makeatother%
  \begin{picture}(1,0.76917764)%
    \lineheight{1}%
    \setlength\tabcolsep{0pt}%
    \put(0,0){\includegraphics[width=\unitlength,page=1]{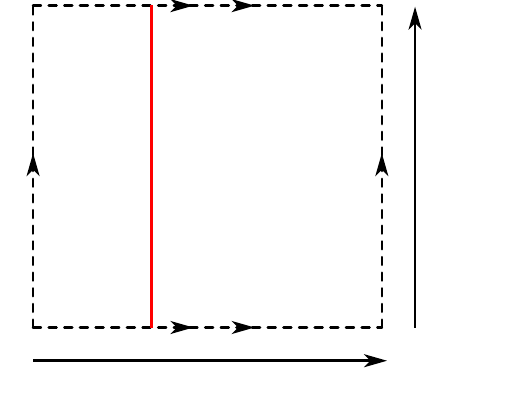}}%
    \put(0.40033348,0.00986098){\color[rgb]{0,0,0}\makebox(0,0)[rt]{\lineheight{1.25}\smash{\begin{tabular}[t]{r}$S^1_\mu$\end{tabular}}}}%
    \put(0.82694288,0.35516407){\color[rgb]{0,0,0}\makebox(0,0)[lt]{\lineheight{1.25}\smash{\begin{tabular}[t]{l}$S^1_{\ell}$\end{tabular}}}}%
    \put(0,0){\includegraphics[width=\unitlength,page=2]{fig22.pdf}}%
    \put(0.40184783,0.18427816){\color[rgb]{0,0,0}\makebox(0,0)[t]{\lineheight{1.25}\smash{\begin{tabular}[t]{c}$R_+$\end{tabular}}}}%
    \put(0.62270584,0.18427816){\color[rgb]{0,0,0}\makebox(0,0)[t]{\lineheight{1.25}\smash{\begin{tabular}[t]{c}$R_-$\end{tabular}}}}%
    \put(0,0){\includegraphics[width=\unitlength,page=3]{fig22.pdf}}%
  \end{picture}%
\endgroup%

	\caption{Bypass attachments along $\d(Y \setminus N(L))$
    to obtain $\xi_{\bL}^{\pitchfork}$ from $\xi_0$.}
\label{fig:19}
\end{figure}

Following the proof of \cite{StipsiczVertesiEH-LOSS}*{Theorem~1.5}, we see
that the contact structure $\xi_{\bL}^{\pitchfork}$ on $Y \setminus N(L)$ is
invariant under negative stabilizations of $\cL$, up to isotopy relative to
$\d(Y \setminus N(L))$, in the following sense. If $\cL^-$ is a negative stabilization, we
may view $\cL^-$ as lying in $N(L)$. Under the canonical diffeomorphism
between $Y\setminus N(\cL)$ and $Y \setminus N(\cL^-)$, the contact structures
constructed from $\cL$ and $\cL^-$ are isotopic, relative to $\d(Y \setminus N(L))$.

Given a pointed open book $\cO=(\page ,\ve{p}, h)$ for a multi-pointed link
$\bL$ in $Y$, we may obtain a partial open book
$\cO^\circ:=(\page^\circ,P,h|_P)$ for the sutured manifold $Y(\bL)$, as
follows. Take $\page^\circ = \page \setminus N(\ve{p})$. The subsurface $P$ is obtained from
$\page^\circ$ by removing a neighborhood of $|\ps|$ pairwise disjoint arcs, each connecting a
point of $\ps$ to $\d \page $.  The partial open book $\cO^\circ$ is clearly
a partial open book for $Y(\bL)$. Moreover, we have the following:

\begin{prop} \label{prop:pointed-to-partial}
Let $\bL$ be a multi-pointed transverse link in the closed contact 3-manifold $(Y, \xi)$.
If $(\cO, \varphi)$ is an embedded, pointed open book for $(Y,\bL)$ compatible
with $\xi$ in the sense of part \ref{it:compat} of Definition~\ref{def:transverse},
then the partial open book $(\cO^\circ, \varphi|_{Y_{\cO^\circ}})$
for $Y(\bL)$ is compatible with $\xi_{\bL}^\pitchfork$.
\end{prop}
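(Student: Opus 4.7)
The plan is to verify directly that the partial open book $\cO^\circ$ is compatible with $\xi_{\bL}^{\pitchfork}$ by comparing both contact structures to a common Legendrian approximation $\cL$ of $L$ coming from the pointed open book. First, by part \ref{it:compat} of Definition~\ref{def:transverse}, I fix a contact $1$-form $\alpha$ for $\xi$ with $d\alpha > 0$ on each page, $\alpha > 0$ on the binding, and $\alpha > 0$ on $L$. I choose pairwise disjoint arcs $a_p \subset \page$ from $p$ to $\d \page$ for each $p \in \ps$; these are exactly the arcs whose neighborhoods in $\page^\circ$ are removed to form the subsurface $P$ in the definition of $\cO^\circ$.

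Second, I construct a Legendrian approximation $\cL$ of $L$ adapted to the open book by running, for each $p \in \ps$, a Legendrian curve along a push-off of $a_p$ inside a page, with additional stabilizations as needed. After arranging the pages to be convex (which is possible since $d\alpha > 0$ on pages), the Legendrian realization principle guarantees that such $\cL$ exists and is a Legendrian approximation of $L$, in the sense that $L$ is isotopic through transverse links to the transverse push-off of $\cL$. Moreover, the contact structure $\xi|_{Y \setminus N(\cL)}$ admits a compatible partial open book description with page $\page^\circ$ and monodromy $h|_{\page^\circ}$, by the Giroux correspondence applied to the restriction of $\alpha$.

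Third, I identify $Y(\bL)$ with the result of attaching one bypass along $\d N(\cL)$ for each point $p \in \ps$, as prescribed by the Stipsicz--V\'ertesi construction and Figure~\ref{fig:19}; so a component of $\bL$ with $2n$ basepoints receives $n$ bypasses. On the partial open book side, each bypass attachment corresponds to cutting the monodromy identification on the $I$-bundle $N(a_p) \times I$; the cumulative effect is to restrict the domain of the monodromy from $\page^\circ$ to $P = \page^\circ \setminus \bigcup_{p \in \ps} N(a_p)$, yielding exactly the partial open book $\cO^\circ$. By the Honda--Kazez--Mati\'c correspondence between partial open books and sutured contact structures, $(\cO^\circ, \varphi|_{Y_{\cO^\circ}})$ is therefore compatible with $\xi_{\bL}^{\pitchfork}$.

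The main obstacle is the local model in the third step: verifying rigorously that the bypass attachment pattern of Figure~\ref{fig:19} for a component with $2n$ basepoints matches the cyclic arrangement of the $n$ arcs $a_p$ on that component, with the correct $R_\pm$ decomposition of $\d N(\cL)$ and the correct dividing curves. This amounts to a careful local computation near each component of $\cL$, directly generalizing the $n = 1$ case treated in \cite{StipsiczVertesiEH-LOSS}*{Theorem~1.5}, but requires attention to the cyclic ordering of the bypasses and to the invariance of $\xi_{\bL}^{\pitchfork}$ under negative stabilization of $\cL$ already noted in the text.
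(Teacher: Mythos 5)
Your overall skeleton (Legendrian approximation, partial open book for its complement, then track the bypasses) is the right one, but as written the proposal has two genuine gaps. First, the construction of $\cL$ in your second step does not work as described: the link $L = \varphi(\ps \times I)$ is positively transverse to the pages and meets each page exactly in $\ps$, so a Legendrian approximation of $L$ cannot be obtained by ``running along a push-off of $a_p$ inside a page'' --- the arcs $a_p$ join the points of $\ps$ to $\d \page$, and a curve following them is neither $C^0$-close to $L$ nor transversely isotopic to its push-off. Relatedly, the assertion that $\xi|_{Y\setminus N(\cL)}$ is compatible with a partial open book ``with page $\page^\circ$ and monodromy $h|_{\page^\circ}$'' is unjustified and not even well-formed data: a partial open book requires a proper subsurface $P$ on which the monodromy is defined, and identifying the correct triple $(\page^\circ, P, h|_P)$ for the Legendrian complement is precisely what is at stake.

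Second, and more seriously, the decisive step --- that the $n$ bypasses of Figure~\ref{fig:19} convert that partial open book into $\cO^\circ$ --- is asserted and then explicitly deferred as ``the main obstacle.'' That local computation is where all the content of the proposition lives, so the proposal does not yet constitute a proof. The paper sidesteps the all-at-once computation by inducting on the number of basepoints per component: the base case (two basepoints per component) is exactly the result of Stipsicz and V\'ertesi \cite{StipsiczVertesiEH-LOSS}, and the inductive step identifies one additional bypass with one positive Markov stabilization of the pointed open book (\cite{BVVVTransverse}*{Section~2.5}; see Figure~\ref{fig:9}) by decomposing the bypass into a contact 1-handle followed by a contact 2-handle and applying the formulas of \cite{JuhaszZemkeContactHandles}*{Figure~1.1} for the effect of contact handles on partial open books; see Figure~\ref{fig:28}. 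To complete your direct approach you would need to supply an analogous local model for each bypass (via contact handle decompositions or convex surface theory), keeping track of the cyclic ordering you flag; reducing to the $n=1$ case and inducting via Markov stabilization is the cleaner route.
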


\begin{proof}
The proof is by induction on the number of basepoints per component. The case
when each component has two basepoints follows from the work of
Stipsicz and V\'ertesi~\cite{StipsiczVertesiEH-LOSS}.
To obtain the statement more generally, we
argue by induction. To increase the number of basepoints on a pointed open
book, we may perform a positive Markov stabilization to $\cO$, as described
by Baldwin, Vela-Vick, and V\'ertesi \cite{BVVVTransverse}*{Section~2.5};
see the left of Figure~\ref{fig:9}. To see that this has the same effect
as one of our bypass attachments, we argue as follows.
Bypasses may be attached one at a time. In Figure~\ref{fig:28}, we
illustrate one of the attaching arcs of a bypass used in the construction of
$\xi^{\pitchfork}_{\bL}$, after one bypass has already been attached. A
bypass attachment is, by definition, a contact 1-handle followed by a contact
2-handle. The feet of the contact 1-handle are attached at the end points of
the bypass arc, and the contact 2-handle is attached along the union of a
longitude of the 1-handle, and the original bypass arc.  The effect of
contact 1-handles and 2-handles on partial open books is described in
\cite{JuhaszZemkeContactHandles}*{Figure~1.1}. An easy local computation
shows the effect on partial open books of the bypass shown on the right-hand side
of Figure~\ref{fig:28} coincides with a positive Markov stabilization of
pointed open books.
\end{proof}

\begin{figure}[ht!]
	\centering
	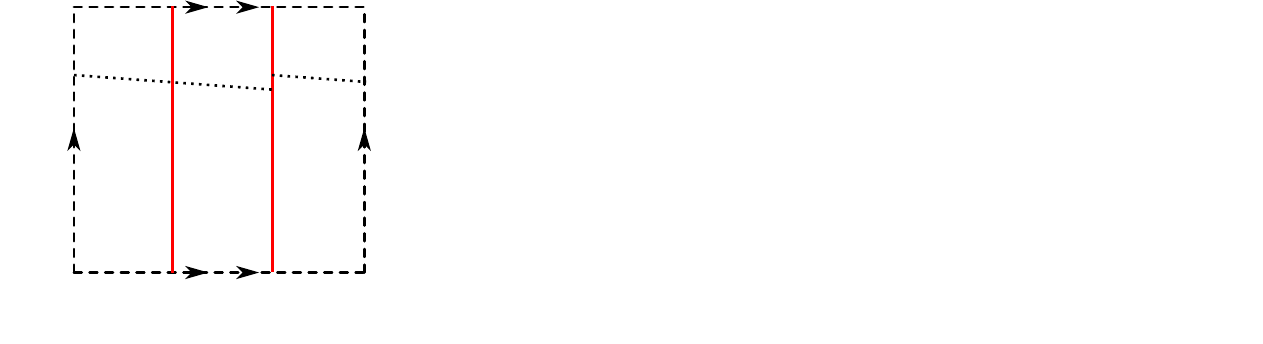
	\caption{Left: Two bypasses giving $\xi_{\bL}^{\pitchfork}.$
    Middle: The effect of attaching the top bypass. Right: An isotopy of the middle picture.
    The sutures on the right are labeled by the corresponding basepoint.}
\label{fig:28}
\end{figure}

\subsection{Ascending surfaces and open books}

In this section, we recall how to interpret ascending surfaces in terms of
pointed open books. Given a contact manifold $(Y, \xi)$, we denote its symplectization
by $\Symp(Y,\xi)$. We first consider index 0 critical points of an ascending
surface, which are always positive by Lemma~\ref{lem:ascending-surface}.

\begin{lem}\label{lem:open-books-index-0}
Suppose that $(W,\cS)\colon (Y,\bL_0,\xi)\to (Y,\bL_1,\xi)$ is an ascending
link cobordism with $W = [a,b] \times Y \subset \Symp(Y,\xi)$, such that $\cS$
has a single (positive) index 0 critical point $p$, and $\cS$ is locally
translation invariant outside of a small neighborhood of $p$, and the
basepoints of $\bL_1$ are the images of the basepoints of $\bL_0$ under the
translation in the $[a,b]$-direction. If $\cO = (\page,\ve{p},h)$ is a pointed
open book for $(Y,\bL_0,\xi)$, then a pointed open book for $(Y,\bL_1,\xi)$
may be obtained (after an ambient isotopy of $\cO$) by adding a basepoint
near the boundary of $\page$, and keeping $h$ unchanged.
\end{lem}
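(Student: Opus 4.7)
The plan is to identify the new component of $\bL_1$ created by the birth at $p$ as a standard positive transverse unknot in a small Darboux ball, and then realize such an unknot in $\cO$ by adding a marked point near $\d \page$.

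First, I would put $\cS$ into a standard form near $p$. Since $p$ is an index $0$ critical point of $\phi|_S$, and positive by Lemma~\ref{lem:ascending-surface}, there is a small disk component $D_\epsilon \subset \phi|_S^{-1}([\phi(p),\phi(p)+\epsilon])$ containing $p$, with boundary $\gamma_\epsilon \subset \phi^{-1}(\phi(p)+\epsilon)$. By the ascending condition, $\gamma_\epsilon$ is a positive transverse knot in the contact level set. The Stokes' theorem computation in the proof of Lemma~\ref{lem:ascending-surface}, applied to $D_\epsilon$, shows that the symplectic form $\omega$ pairs positively with $D_\epsilon$, so $\gamma_\epsilon$ bounds a symplectic disk in a small ball around $p$; standard arguments then give that $\gamma_\epsilon$ is a transverse unknot with self-linking number $-1$.

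Second, I would use that outside a small neighborhood of $p$ the cobordism $\cS$ is the translation of $\bL_0$, so $\bL_1$ is obtained (up to translating $\bL_0$) by disjoint union with a single extra component $K'$, namely the trace of $\gamma_\epsilon$ under the translation to $\{b\}\times Y$. Then $K'$ is itself a positive transverse unknot of maximal self-linking number contained in an arbitrarily small Darboux ball disjoint from the translate of $\bL_0$. By Eliashberg's classification of transverse unknots in a Darboux ball, $K'$ is transversely isotopic (inside that ball) to the standard transverse unknot.

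Third, I would produce the desired pointed open book. Near a binding component of $\cO$, the compatibility with $\xi$ provides a standard local model in which a small loop encircling a single marked point $q \in \page$ placed close to $\d \page$ is a positive transverse unknot with maximal self-linking number. After an ambient isotopy of $\cO$ supported in a small neighborhood of $\d \page$ (and disjoint from $\bL_0$ and the Darboux ball containing $K'$), such a loop can be isotoped to $K'$ through transverse unknots. Taking $\ve{p}' := \ve{p} \cup \{q\}$ then gives an embedded pointed open book $(\page, \ve{p}', h)$ for $(Y, \bL_1, \xi)$; the monodromy $h$ is unchanged because $q$ lies in the region where $h = \id$ and is therefore automatically setwise fixed by $h$.

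The main obstacle is the second and third steps, namely identifying $K'$ as the standard transverse unknot and showing that a new marked point near $\d \page$ realizes it inside the open book. This requires combining the local symplectic/contact analysis of ascending surfaces near positive index $0$ critical points (as in Lemma~\ref{lem:ascending-surface}) with the transverse classification of the unknot, and with the standard local model for a neighborhood of the binding in a compatible open book.
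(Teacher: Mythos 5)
Your proposal is correct in substance, but it takes a genuinely different route from the paper's proof. The paper argues locally and avoids any classification theorem: one isotopes the open book $\cO$ so that the binding passes through the birth point $p$, transversely to $\xi_p$; since the tangent plane of $S$ at the critical point is $\{c\}\times \xi_p$, the level circles of $\cS$ just above the birth meet each page in exactly one point, and that intersection point is taken as the new basepoint (it lies near $\partial \page$, where $h=\id$, so the monodromy is unchanged). You instead identify the new component $K'$ of $\bL_1$ as a transverse unknot of maximal self-linking number $-1$ in a small Darboux ball disjoint from $\bL_0$, observe that the curve determined by a marked point near $\partial\page$ is of the same transverse knot type, and then invoke the classification of transverse unknots by self-linking number together with the extension of transverse isotopies to ambient contact isotopies. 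Both arguments work; the paper's is more elementary and self-contained, while yours trades the explicit local analysis for an appeal to Eliashberg's classification. Two points to tighten in your write-up: (i) the link component associated to the new marked point $q$ is its trace $\varphi(\{q\}\times I)$, a meridian of the binding transverse to every page, not a loop in a single page encircling $q$; and (ii) the transverse isotopy carrying that trace to $K'$ must be performed in the complement of $\bL_0$ (which is fine, since both unknots lie in Darboux balls disjoint from $\bL_0$ and can be connected by a path in the complement), so your description of the isotopy as supported near $\partial\page$ and disjoint from the Darboux ball containing $K'$ is self-contradictory as written and should be replaced by support in a neighborhood of such a connecting path.
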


\begin{proof}
Suppose $(c, p) \in [a,b] \times Y$ is the critical point of $\cS$.
Isotope $\cO$ so that the binding passes through $p$,
transverse to $\xi_p$. Then the regular level sets of $\cS$ immediately
after $c$ intersect the pages of the open book in a single point, since
$T_{(c, p)} S = \{c\} \times \xi_p$. Let the intersection of the level
set of $S$ with value $c+\epsilon$ and a page of the
open book be the new basepoint. It is straightforward to see that this open
book has the desired properties.
\end{proof}

We now consider index 1 critical points. We recall that, if
$p_1$ and $p_2$ are distinct points on a surface $\Sigma$ connected by
an embedded path $\gamma$, there are half-twist diffeomorphisms
$\tw_{\g}^{\pm}$ that swap $p_1$ and $p_2$, and are supported in a
neighborhood of $\g$, as in Figure~\ref{fig:2}.

\begin{lem}\label{lem:ascending-index-1-half-twist}
 Suppose that $(W, \cS) \colon (Y,\bL_0,\xi) \to (Y,\bL_1,\xi)$ is an ascending
 link cobordism with $W = [a,b] \times Y \subset \Symp(Y,\xi)$, such that $\cS$
 has a single index 1 critical point, of sign $\epsilon = \pm 1$.  Suppose
 further that, outside a small neighborhood of the critical point, the surface
 $\cS$ is locally translation invariant, and the basepoints of $\bL_1$ coincide
 with the images of the basepoints of $\bL_0$ under translation. Then we may
 pick a pointed open book $(\page,\ve{p},h)$ for $(Y,\bL_0,\xi)$ such that
 $(\page,\ve{p},\tw_{\g}^{\epsilon} \circ  h)$ is a pointed open book for
 $(Y,\bL_1,\xi)$, where $\tw_{\g}^{\epsilon}$ is a half-twist diffeomorphism
 associated to a path on $\page$ connecting two distinct basepoints, of sign
 $\epsilon$.
\end{lem}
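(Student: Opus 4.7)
The proof should parallel the index~$0$ case handled in Lemma~\ref{lem:open-books-index-0}, but with the local model of a positive or negative saddle replacing that of a birth. My plan is as follows.

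First, I would establish a standard local model for the critical point. Since $\cS$ is ascending and $p$ is an index~$1$ critical point of $\phi|_S$ of sign $\epsilon$, a local version of the normal form results of Boileau--Orevkov and Hayden (as used in the proof of Lemma~\ref{lem:ascending-surface}) gives a neighborhood of $p$ in $(W,\cS)$ that is contactomorphic to a fixed model $(\bR \times U, \bR \times \alpha_{\mathrm{std}}, \Sigma_\epsilon)$, where $U \subset \bR^3$ is a Darboux ball, and $\Sigma_\epsilon$ is the trace in the symplectization of a standard transverse band move (of sign $\epsilon$) on two short parallel transverse arcs. The sign of the symplectic form on $T_p S$ matches $\epsilon$ by definition.

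Next, I would interpret the cobordism as a local transverse band move. Just below the critical value $\phi(p)$, the link $\bL_0$ contains two short parallel transverse strands sitting in $U$, connected by the unstable manifold of $p$ in $S$; just above, these two strands are recombined via a positive or negative band, corresponding to the generator $\sigma^{\epsilon}$ of a local 2-strand braid. Away from the critical point, the ascending surface is translation-invariant and the basepoints of $\bL_0$ and $\bL_1$ coincide under translation, so $\bL_1$ agrees with $\bL_0$ outside $U$. The two strands in question each carry one basepoint, which remain paired across the saddle.

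Third, I would build a compatible pointed open book adapted to the local picture. Starting from any compatible pointed open book for $(Y,\bL_0,\xi)$ and using the positive Markov-type stabilization moves of Baldwin--Vela-Vick--V\'ertesi (as in Proposition~\ref{prop:pointed-to-partial}), together with a Giroux-style adjustment of the open book inside $U$, I can arrange that the two relevant basepoints $p_1, p_2 \in \ve{p}$ lie on a common page of $\cO = (\page,\ve{p},h)$ and are joined by a short embedded arc $\gamma \subset \page$ contained in the slice of $U$ by this page. The contact form can be taken standard in $U$ so the adjustment and the local model agree.

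Finally, I would compute the effect of the saddle on the open book. In the local model, comparing the embedded open book just below and just above the critical value shows that the pair $(p_1,p_2)$ has been swapped by a positive (resp.\ negative) half-twist along $\gamma$, while everything outside $U$ is unchanged. Equivalently, the returning page glues back to itself by $\tw_\gamma^\epsilon \circ h$ rather than $h$. Hence $(\page, \ve{p}, \tw_\gamma^\epsilon \circ h)$ is an embedded pointed open book for $(Y,\bL_1,\xi)$ compatible with $\xi$, as claimed.

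The main obstacle will be the local computation in the last step: matching the sign convention $\epsilon$ of the critical point of $\phi|_S$ (defined via $\omega|_{T_pS}$) with the sign convention for the half-twist $\tw_\gamma^\epsilon$ (defined via the orientation of $\page$ and $\gamma$). Getting this identification correct requires carefully tracking the co-orientation of $\xi$, the orientation of the page, and the orientation of $S$ near $p$ simultaneously in the local model, analogous to how positive/negative stabilization is distinguished in the BVV framework.
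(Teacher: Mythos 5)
Your outline matches the paper's approach: the paper gives no independent argument here, but instead refers to Hayden's Theorem~4.3 and Lemma~3.5 in \cite{HaydenSteinSurfaces}, whose proof is precisely the strategy you describe --- isotope the ascending surface into braided position with respect to a compatible pointed open book, then verify in a standard local model that an index~1 critical point of sign $\epsilon$ changes the monodromy by the half-twist $\tw_{\g}^{\epsilon}$ along an arc in the page joining the two relevant basepoints. The sign-matching issue you flag at the end is indeed the one substantive computation, and it is exactly the content of Hayden's local lemma.
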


A proof of Lemma~\ref{lem:ascending-index-1-half-twist}
may be found by following the proof of \cite{HaydenSteinSurfaces}*{Theorem~4.3}; see specifically Lemma~3.5 therein.

\begin{figure}[ht!]
\centering
\begingroup%
  \makeatletter%
  \providecommand\color[2][]{%
    \errmessage{(Inkscape) Color is used for the text in Inkscape, but the package 'color.sty' is not loaded}%
    \renewcommand\color[2][]{}%
  }%
  \providecommand\transparent[1]{%
    \errmessage{(Inkscape) Transparency is used (non-zero) for the text in Inkscape, but the package 'transparent.sty' is not loaded}%
    \renewcommand\transparent[1]{}%
  }%
  \providecommand\rotatebox[2]{#2}%
  \newcommand*\fsize{\dimexpr\f@size pt\relax}%
  \newcommand*\lineheight[1]{\fontsize{\fsize}{#1\fsize}\selectfont}%
  \ifx\svgwidth\undefined%
    \setlength{\unitlength}{301.88948294bp}%
    \ifx\svgscale\undefined%
      \relax%
    \else%
      \setlength{\unitlength}{\unitlength * \real{\svgscale}}%
    \fi%
  \else%
    \setlength{\unitlength}{\svgwidth}%
  \fi%
  \global\let\svgwidth\undefined%
  \global\let\svgscale\undefined%
  \makeatother%
  \begin{picture}(1,0.3006937)%
    \lineheight{1}%
    \setlength\tabcolsep{0pt}%
    \put(0,0){\includegraphics[width=\unitlength,page=1]{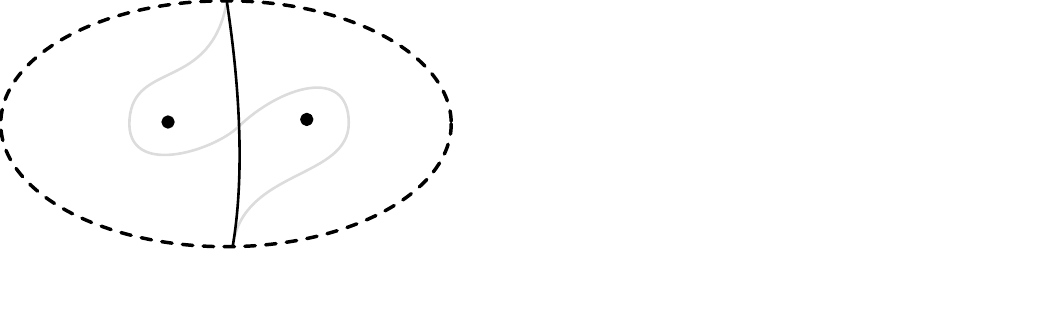}}%
    \put(0.21244993,0.00698237){\color[rgb]{0,0,0}\makebox(0,0)[t]{\lineheight{1.25}\smash{\begin{tabular}[t]{c}$\tw^+_\g$\end{tabular}}}}%
    \put(0,0){\includegraphics[width=\unitlength,page=2]{fig2.pdf}}%
    \put(0.78124854,0.00453437){\color[rgb]{0,0,0}\makebox(0,0)[t]{\lineheight{1.25}\smash{\begin{tabular}[t]{c}$\tw^-_\g$\end{tabular}}}}%
    \put(0,0){\includegraphics[width=\unitlength,page=3]{fig2.pdf}}%
    \put(0.25806559,0.15950077){\color[rgb]{0,0,0}\makebox(0,0)[t]{\lineheight{1.25}\smash{\begin{tabular}[t]{c}$\g$\end{tabular}}}}%
    \put(0.21726598,0.10087323){\color[rgb]{0,0,0}\makebox(0,0)[rt]{\lineheight{1.25}\smash{\begin{tabular}[t]{r}$\ve{a}$\end{tabular}}}}%
  \end{picture}%
\endgroup%

\caption{The positive and negative half-twist diffeomorphisms.}
\label{fig:2}
\end{figure}

\section{Transverse invariants, and their functoriality under ascending Weinstein cobordisms}
\label{sec:functoriality}

Lisca, Ozsv\'{a}th, Stipsicz, and Szab\'{o} \cite{LOSS} described an invariant
of Legendrian knots, which is commonly referred to as the \emph{LOSS}
invariant. The LOSS invariant is unchanged by negative stabilizations, and
hence gives an invariant of transverse knots, by work of Epstein, Fuchs, and
Meyer \cite{EpsteinFuchsMeyer} and Etnyre and Honda
\cite{EtynreHondaTransverse}. Baldwin, Vela-Vick, and
V\'ertesi~\cite{BVVVTransverse} constructed an extension of the LOSS
invariant using multi-pointed open books that they called the \emph{BRAID} invariant,
which is more immediately an invariant of transverse links. We will refer to the common invariant as the
\emph{transverse invariant}.   If $\bL=(L,\ws,\zs)$ is a transverse,
multi-pointed link in the contact 3-manifold $(Y,\xi)$, the transverse invariants take the form of
elements
\begin{equation}\label{eq:transverse-invariants-domain}
\hat{\tra}(Y,\bL,\xi)\in \HFLh(-Y,-\bL) \quad \text{and}  \quad
\tra^-(Y,\bL,\xi)\in \HFL^-(-Y,-\bL).
\end{equation}
See Remark~\ref{rem:conventions} for a note on orientation conventions.
We will review the construction of the transverse invariant in Section~\ref{sec:construction-braid}.

In this section, we study the functoriality of the transverse invariants.
We restate our main result, Theorem~\ref{thm:ascending-Stein} from the introduction,
after the following definition.

\begin{define}\label{def:arboreal-decoration}
Suppose $(W,\cS) \colon (Y_0,\bL_0) \to (Y_1,\bL_1)$ is a decorated link
cobordism, equipped with a Morse function $\phi \colon S \to I$.
Write $\bL_i = (L_i,\ws_i,\zs_i)$. We say $\cS$ is \emph{$\ws$-arboreal} with
respect to $\phi$ if there is a forest $F \subset \cS_{\ws}$ (i.e., a disjoint
union of trees) such that the following hold:
\begin{enumerate}
\item $\cS_{\ws}$ is a regular neighborhood of $F$.
\item $d\phi$ restricts non-trivially to each edge of $F$.
\item Each vertex $v$ of $F\cap \Int(W)$ is incident to exactly one edge on which $\phi$ takes
    lower value than $\phi(v)$.
\end{enumerate}
\end{define}

We say that a decorated link cobordism $(W,\cS)\colon (Y_0,\bL_0)\to (Y_1,\bL_1)$ is
\emph{$\ws$-anti-arboreal} with respect to a Morse function $\phi \colon S \to I$ if the
reversed link cobordism $(W,\cS)^{\vee}\colon (-Y_1,-\bL_1)\to
(-Y_0,-\bL_0)$ is $\ws$-arboreal with respect to $-\phi$. Note that the
composition of $\ws$-arboreal decorations is $\ws$-arboreal. See
Figure~\ref{fig:12} for an example of a $\ws$-anti-arboreal decoration.

\begin{figure}[ht!]
\centering
\begingroup%
  \makeatletter%
  \providecommand\color[2][]{%
    \errmessage{(Inkscape) Color is used for the text in Inkscape, but the package 'color.sty' is not loaded}%
    \renewcommand\color[2][]{}%
  }%
  \providecommand\transparent[1]{%
    \errmessage{(Inkscape) Transparency is used (non-zero) for the text in Inkscape, but the package 'transparent.sty' is not loaded}%
    \renewcommand\transparent[1]{}%
  }%
  \providecommand\rotatebox[2]{#2}%
  \newcommand*\fsize{\dimexpr\f@size pt\relax}%
  \newcommand*\lineheight[1]{\fontsize{\fsize}{#1\fsize}\selectfont}%
  \ifx\svgwidth\undefined%
    \setlength{\unitlength}{158.5815267bp}%
    \ifx\svgscale\undefined%
      \relax%
    \else%
      \setlength{\unitlength}{\unitlength * \real{\svgscale}}%
    \fi%
  \else%
    \setlength{\unitlength}{\svgwidth}%
  \fi%
  \global\let\svgwidth\undefined%
  \global\let\svgscale\undefined%
  \makeatother%
  \begin{picture}(1,0.64309144)%
    \lineheight{1}%
    \setlength\tabcolsep{0pt}%
    \put(0.12729832,0.36142802){\color[rgb]{0,0,0}\makebox(0,0)[rt]{\lineheight{1.25}\smash{\begin{tabular}[t]{r}$\bL_0$\end{tabular}}}}%
    \put(0.87253516,0.36142802){\color[rgb]{0,0,0}\makebox(0,0)[lt]{\lineheight{1.25}\smash{\begin{tabular}[t]{l}$\bL_1$\end{tabular}}}}%
    \put(0,0){\includegraphics[width=\unitlength,page=1]{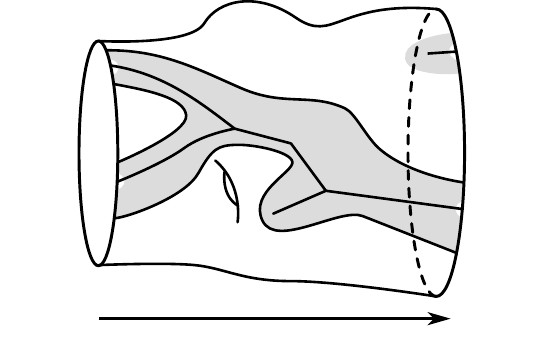}}%
    \put(0.57964463,0.51797032){\color[rgb]{0,0,0}\makebox(0,0)[lt]{\lineheight{1.25}\smash{\begin{tabular}[t]{l}$\cS_{\zs}$\end{tabular}}}}%
    \put(0.56177534,0.35908015){\color[rgb]{0,0,0}\makebox(0,0)[lt]{\lineheight{1.25}\smash{\begin{tabular}[t]{l}$\cS_{\ws}$\end{tabular}}}}%
    \put(0.44944172,0.41304586){\color[rgb]{0,0,0}\makebox(0,0)[lt]{\lineheight{1.25}\smash{\begin{tabular}[t]{l}$F$\end{tabular}}}}%
    \put(0,0){\includegraphics[width=\unitlength,page=2]{fig23.pdf}}%
    \put(0.46827627,0.00507428){\color[rgb]{0,0,0}\makebox(0,0)[t]{\lineheight{1.25}\smash{\begin{tabular}[t]{c}$\phi$\end{tabular}}}}%
  \end{picture}%
\endgroup%

\caption{A $\ws$-anti-arboreal decoration of a cobordism from $\bL_0$ to $\bL_1$.}
\label{fig:12}
\end{figure}

The main theorem of this section is the following:

\begin{customthm}{\ref{thm:ascending-Stein}}
Suppose $(W,\cS)\colon (Y_0,\bL_0)\to (Y_1,\bL_1)$ is a decorated link
cobordism such that $W$ has a Weinstein structure $(W,\omega,\phi,V)$,
and $\cS=(S,\cA)$, where $S$ is an ascending surface with positive critical
points. If the decoration $\cA$ is $\ws$-anti-arboreal with respect to
$\phi$, then
\[
\left(F^\circ_{W,\cS}\right)^{\vee}\left(\tra^\circ(\bL_1)\right)=\tra^\circ(\bL_0),
\]
for $\circ\in \{\wedge,-\}$.
\end{customthm}

\begin{rem}\label{rem:conventions}
Our orientation convention in equation~\eqref{eq:transverse-invariants-domain} departs
slightly from the conventions of Lisca, Ozsv\'ath, Stipsicz, and
Szab\'o~\cite{LOSS}, and Baldwin, Vela-Vick, and V\'ertesi~\cite{BVVVTransverse},
where the transverse invariants lie in $\HFL^\circ(-Y,\bL)$. Note, however,
that there is an isomorphism $\HFL^\circ(-Y,\bL) \iso \HFL^\circ(-Y,-\bL)$.
Ozsv\'ath, Szab\'o, and Thurston~\cite{OSTLegendrian} define two transverse invariants,
denoted $\lambda^+$ and $\lambda^-$.  The invariant $\lambda^+$ lies in
$\HFL^\circ(-Y,\bL)$, and coincides with the LOSS/BRAID invariants, as
defined in \cite{LOSS} and \cite{BVVVTransverse}, according to
\cite{BVVVTransverse}*{Theorem~8.1}. The invariant $\lambda^-$ lies in
$\HFL^\circ(-Y,-\bL)$, and coincides with the invariant for which we write
$\tra^\circ(Y,\bL,\xi)$.  Theorem~\ref{thm:ascending-Stein} could be stated
for the invariants in $\HFL^\circ(-Y,\bL)$ by considering $\zs$-anti-arboreal
decorations and the cobordism map $F_{W,-\cS}^\circ$, where $-\cS$ denotes
$\cS$ with the opposite orientation.
\end{rem}

We have the following immediate corollaries of Theorem~\ref{thm:ascending-Stein}:

\begin{cor}\label{cor:BRAID}
Let the knot $K$ in $S^3$ be the closure of a quasipositive braid, and let
$S$ be the corresponding quasipositive link cobordism.
We decorate $\cS$ with a dividing set consisting of a single arc, so that $g(\cS_{\ws}) = 0$ and
$g(\cS_{\zs}) = g(S)$.  We view $(B^4,\cS)$ as a cobordism from $\emptyset$ to
$(S^3,K)$. As $K$ is in braid position, it is a transverse knot in the
standard tight contact structure on $S^3$. We have
\[
\left(\hat{F}_{B^4, \cS}\right)^\vee\left(\hat{\tra}(K)\right) = 1 \in
\widehat{\HFK}(\emptyset)\iso \bF_2 \quad \text{and} \quad
\left(F_{B^4,\cS}^-\right)^{\vee}\left(\tra^-(K)\right) = 1 \in \HFK^-(\emptyset) \cong \bF[v].
\]
\end{cor}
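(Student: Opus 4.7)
The strategy is to recognize the corollary as a direct application of Theorem~\ref{thm:ascending-Stein} to the decorated link cobordism $(B^4, \cS) \colon \emptyset \to (S^3, K)$. First I would equip $B^4$ with its standard Weinstein structure $(B^4, \omega_{\mathrm{std}}, \phi = |z|^2, V)$ inherited from $B^4 \subset \C^2$; the induced contact structure on $\partial B^4$ is the standard tight $\xi_{\mathrm{std}}$, with respect to which $K$, being in braid position, is a positive transverse link.

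Next, I would isotope the quasipositive link cobordism $S$ so that it is ascending with only positive critical points. Writing the given quasipositive word as $\cW = \prod_{i=1}^k w_i \sigma_{j_i} w_i^{-1}$, the surface $S$ is built from $n$ births followed by $k$ saddles, one per factor. The births correspond to index $0$ critical points of $\phi|_S$ and are automatically positive by Lemma~\ref{lem:ascending-surface}, while Hayden's construction in \cite{HaydenSteinSurfaces} (building on Boileau--Orevkov~\cite{Boileau-Orevkov}) realizes each saddle as a positive index $1$ critical point of $\phi|_S$ while keeping the level sets positively transverse to $\xi_{\mathrm{std}}$.

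Then I would specify the single-arc decoration $\cA$ so that $\cS_{\ws}$ is a thin disk meeting $K$ in one subarc, forcing $g(\cS_{\ws}) = 0$ and $g(\cS_{\zs}) = g(S)$. To verify $\ws$-anti-arboreality with respect to $\phi$, I would take a spine $F$ of $\cS_{\ws}$ consisting of a single edge joining a point of $K \subset \partial W$ to a point in $\Int(W)$, arranged after a small perturbation so that $-\phi$ is monotonic along $F$. Since $F$ has at most one interior vertex, which is incident to a single edge, condition~(3) in Definition~\ref{def:arboreal-decoration} for the reversed cobordism holds trivially, so $\cA$ is $\ws$-anti-arboreal.

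With these ingredients, Theorem~\ref{thm:ascending-Stein} yields
\[
\bigl(F^\circ_{B^4, \cS}\bigr)^\vee \bigl(\tra^\circ(K)\bigr) = \tra^\circ(\emptyset)
\]
for $\circ \in \{\wedge, -\}$, and it remains to identify $\tra^\circ(\emptyset)$ with the canonical generator $1$ in $\widehat{\HFK}(\emptyset) \iso \bF_2$, respectively $\HFK^-(\emptyset) \iso \bF[v]$, since the transverse invariant of the empty link is just the unit class. The main obstacle I expect is the honest verification of the ascending structure with positive critical points (which requires care in matching the smooth band picture with Hayden's Legendrian/transverse model) and the anti-arboreal decoration; once those are secured, the remaining content is a direct invocation of Theorem~\ref{thm:ascending-Stein}.
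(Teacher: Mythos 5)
Your proposal is correct and follows exactly the route the paper intends: the paper states Corollary~\ref{cor:BRAID} as an immediate consequence of Theorem~\ref{thm:ascending-Stein} without further argument, and your verification of the three hypotheses (standard Weinstein structure on $B^4$, the quasipositive link cobordism as an ascending surface with positive critical points via Lemma~\ref{lem:ascending-surface} and the Boileau--Orevkov/Hayden picture, and the single-edge forest witnessing $\ws$-anti-arboreality of the bigon decoration) supplies precisely the details the paper leaves implicit.
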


\begin{cor}\label{cor:non-vanishing}
Suppose that $\cS\subset B^4$ is a decorated surface obtained
by pushing a quasipositive Seifert surface of a knot $K$ into $B^4$,
with a decoration such that $g(\cS_{\ws}) = 0$ and $g(\cS_{\zs}) = g(S)$. Then
\[
F^\circ_{B^4, \cS} \colon \HFK^\circ(\emptyset) \to \HFK^\circ(K)
\]
are non-vanishing, for $\circ \in \{\wedge,-\}$.
\end{cor}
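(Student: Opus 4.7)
The plan is to reduce Corollary~\ref{cor:non-vanishing} to Corollary~\ref{cor:BRAID} via isotopy invariance of the link cobordism maps. The key input, recalled at the end of Section~\ref{sec:background-quasipositive}, is that the quasipositive Seifert surface for $K$ associated to a strongly quasipositive braid word $\cW$ is smoothly isotopic rel $K$ to the quasipositive link cobordism of $\cW$, viewed as a cobordism $\emptyset \to (S^3,K)$ pushed into $B^4$.

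First I fix a strongly quasipositive presentation $\cW$ whose associated quasipositive Seifert surface is $S$. Let $\cS'$ denote the corresponding quasipositive link cobordism, equipped with a single dividing arc so that $g(\cS'_{\ws})=0$ and $g(\cS'_{\zs})=g(S)$; this is the same decoration data used in Corollary~\ref{cor:BRAID}, and it agrees with the decoration placed on $\cS$ in the statement of Corollary~\ref{cor:non-vanishing}. By the isotopy above, $\cS$ and $\cS'$ are isotopic as decorated link cobordisms rel boundary, so functoriality of the link cobordism maps under rel-boundary isotopy yields $F^\circ_{B^4,\cS} = F^\circ_{B^4,\cS'}$ for $\circ \in \{\wedge,-\}$.

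At this point, Corollary~\ref{cor:BRAID} tells us that $(F^\circ_{B^4,\cS'})^\vee(\tra^\circ(K)) = 1 \neq 0$ in $\HFK^\circ(\emptyset)$, so the dual map is non-zero. Since $\HFK^\circ(\emptyset)$ is free of rank one (over $\bF$ when $\circ=\wedge$, and over $\bF[v]$ when $\circ=-$), a linear map out of it vanishes if and only if its dual does. Hence $F^\circ_{B^4,\cS'}$, and therefore $F^\circ_{B^4,\cS}$, is non-vanishing.

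The argument is almost entirely formal once Corollary~\ref{cor:BRAID} is granted; the only substantive geometric ingredient is the isotopy identifying the pushed-in quasipositive Seifert surface with the quasipositive link cobordism, which is explicitly asserted at the end of Section~\ref{sec:background-quasipositive}. I therefore do not anticipate any genuine obstacle beyond citing the relevant invariance properties.
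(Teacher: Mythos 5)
Your proposal is correct and follows exactly the route the paper intends: the paper states this as an immediate corollary of Theorem~\ref{thm:ascending-Stein}, with the only geometric input being the rel-$K$ isotopy between the pushed-in quasipositive Seifert surface and the quasipositive link cobordism noted at the end of Section~\ref{sec:background-quasipositive}, after which Corollary~\ref{cor:BRAID} and the (trivial) direction of duality that you invoke give non-vanishing.
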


We now state a stronger version of Theorem~\ref{thm:ascending-Stein}
for the cobordism maps with perturbed coefficients, which we will
prove at the end of Section~\ref{sec:proofs}.

\begin{prop}\label{prop:ascending-perturbed-coefficients}
 Suppose that $(W,\cS)\colon (Y_0,\bL_0)\to (Y_1,\bL_1)$ is a decorated link
 cobordism, with $W$ Weinstein, and $\cS$ an ascending surface with positive
 critical points and $\ws$-anti-arboreal decoration. Suppose further that
 $\omegas$ is an $n$-tuple of closed 2-forms on the complement of $\cS$ in
 $W$ that vanish on $Y_0$ and $Y_1$. Then,
 \[
	 \left(\hat{F}_{W,\cS;\omegas}\right)^\vee\left(\hat{\tra}(\bL_1)\otimes 1\right)\doteq \hat{\tra}(\bL_0)\otimes 1,
 \]
 as elements of $\HFLh(-Y_0,-\bL_0)\otimes \bF[\R^n]$. In particular, if
 $K\subset S^3$ is the closure of a quasipositive braid and $S\subset B^4$ is
 the associated ascending surface, then $\Omega(S)=0$.
\end{prop}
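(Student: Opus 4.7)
The plan is to lift Theorem~\ref{thm:ascending-Stein} to the totally twisted setting of Section~\ref{sec:totally-twisted}, then specialize by tensoring with $\bF[\R^n]_{\omegas}$. By equation~\eqref{eq:pair-with-twisted}, the perturbed cobordism map factors as the totally twisted map tensored with $\bF[\R^n]_{\omegas}$, so it suffices to establish a chain-level refinement of Theorem~\ref{thm:ascending-Stein}: in a suitable Heegaard diagram, the totally twisted cobordism map should satisfy
\[
(\underline{\hat{F}}_{W,\cS})^\vee(\xs_1 \otimes 1) \equiv \xs_0 \otimes e^{h_0} \pmod{\d},
\]
where $\xs_i$ are chain-level representatives of $\hat{\tra}(\bL_i)$ coming from a pointed open book compatible with the contact structure, and $h_0 \in (C_2/B_2)(W \setminus N(\cS))$ is a single, explicit $2$-chain.

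To obtain this refinement, I would decompose $(W,\cS)$ using the Weinstein Morse function $\phi$ on $W$ together with the critical points of $\phi|_S$. By Lemma~\ref{lem:ascending-surface}, the surface has only index $0$ or $1$ critical points (positive index $2$ is impossible). By the composition law, it then suffices to prove the chain-level equality for the elementary cobordisms: Weinstein handles disjoint from $\cS$, index $0$ critical points of $\phi|_S$ (locally modeled as in Lemma~\ref{lem:open-books-index-0}), and index $1$ critical points (locally modeled as in Lemma~\ref{lem:ascending-index-1-half-twist}). For each elementary piece, I would work in a Heegaard diagram adapted to the compatible pointed open book, follow the proof of Theorem~\ref{thm:ascending-Stein}, and track the $2$-chain coefficients to verify that exactly one term survives modulo the twisted differential.

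Tensoring the chain-level equality with $\bF[\R^n]_{\omegas}$ then yields
\[
(\hat{F}_{W,\cS;\omegas})^\vee(\hat{\tra}(\bL_1) \otimes 1) = \hat{\tra}(\bL_0) \otimes e^{\left( \int_{h_0} \omega_1,\, \ldots,\, \int_{h_0} \omega_n \right)},
\]
a monomial multiple of $\hat{\tra}(\bL_0) \otimes 1$, which establishes the first claim. For the quasipositive braid application, I would take $(Y_0, \bL_0) = \emptyset$ and $(Y_1, \bL_1) = (S^3, K)$, with $K$ the closure of a quasipositive braid and $S \subset B^4$ the associated ascending surface. Since $\hat{\tra}^\circ(\emptyset) = 1$, the first part yields $(\hat{F}_{B^4, \cS; \omegas})^\vee(\hat{\tra}(K) \otimes 1) \doteq 1$; equivalently, $\hat{F}_{B^4, \cS; \omegas}(1)$ pairs with $\hat{\tra}(K) \otimes 1$ to give a monomial in $\bF[\R^{2g}]$. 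By Proposition~\ref{prop:integrality}, after multiplying by an appropriate monomial, this element lies in $\HFKh(K) \otimes \bF[\Z^{2g}]$ with one coefficient equal to a unit; hence the GCD of its coefficients is a unit and $\Omega(S) = 0$.

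The hardest step will be the chain-level single-term analysis for each elementary ascending cobordism, particularly for index $1$ critical points of $\phi|_S$. In the untwisted setting, multiple holomorphic disk or triangle contributions may sum to $1$ over $\bF$, but in the totally twisted setting they carry distinct $e^h$-coefficients and need not cancel. Verifying single-term output therefore requires careful bookkeeping using the open book data, the explicit link cobordism map formulas from \cite{JCob} and \cite{ZemCFLTQFT}, and the bypass-attachment analysis of Section~\ref{sec:partial-open-books}.
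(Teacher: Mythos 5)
Your proposal matches the paper's proof: both decompose $(W,\cS)$ into elementary ascending/Weinstein pieces and observe that, at the (totally twisted) chain level, the dual of each elementary map sends the canonical cycle $\xs(\cO_1)\otimes 1$ to $\xs(\cO_0)$ times a single monomial $e^{h}$ because exactly one homotopy class contributes (e.g.\ one triangle class for a positive saddle, as in Figure~\ref{fig:14}), after which tensoring with $\bF[\R^n]_{\omegas}$ and evaluating against $\hat{\tra}(K)\otimes 1$ gives $\Omega(S)=0$ exactly as you argue. Just make sure your list of elementary pieces also includes the critical points of $\phi|_{\cA}$ (the $S^+$/$T^-$ quasi-stabilization cobordisms of Lemma~\ref{lem:quasi-stabilization}), which the $\ws$-anti-arboreal decoration forces and which the paper treats alongside the births, saddles, and Weinstein handles.
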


\subsection{The transverse invariants}
\label{sec:construction-braid}

We now review the definition of the transverse invariants, following
Baldwin, Vela-Vick, and V\'ertesi~\cite{BVVVTransverse}.
Suppose that $\bL = (L,\ws,\zs)$ is a multi-pointed,
transverse link in the contact 3-manifold $(Y,\xi)$. We pick an embedded, pointed open book
$\cO = (S,h,\ve{p})$ for $(Y,\bL,\xi)$.  We construct a multi-pointed link
diagram
\[
\cH(\cO) := (\Sigma, \as, \bs, \ws, \zs)
\]
for $(Y,\bL)$, as follows. We define
\[
\Sigma := \page  \times \{1/2\} \cup \bar{\page } \times \{0\}.
\]
Let $\ve{a} = \{a_1, \dots, a_n\}$ be a basis of arcs for $\cO$, as in Definition~\ref{def:arcs}.
For $i \in \{1,\dots,n\}$, let $b_i$ be a small translate of $a_i$ such that we move $\d a_i$
positively along $\d \page $, and such that there is a single transverse intersection point
between $a_i$ and $b_i$. We define
\[
\alpha_i := a_i \times \{1/2\} \cup a_i \times \{0\} \quad \text{and} \quad \beta_i:=b_i \times
\{1/2\} \cup h(b_i) \times \{0\}.
\]
Furthermore, let
$\ws = \ps \times \{1/2\} \in \page \times \{\tfrac{1}{2}\} $ and
$\zs = \ps \times \{0\} \in \bar{\page } \times \{0\}$.
Finally, set
\[
\cH^\vee(\cO):=(\Sigma, \bs, \as, \ws, \zs),
\]
which is a diagram for $(-Y,-\bL)$.

For $\circ \in \{\wedge,-\}$, the transverse invariants
\begin{equation}\label{eq:transervse-invariants-def}
\hat{\frT}(Y,\bL,\xi) \in \HFLh(\cH^\vee(\cO))\quad \text{and} \quad \frT^-(Y,\bL,\xi)\in \HFL^- (\cH^\vee(\cO))
\end{equation}
are given by the homology class of the intersection point
\[
\xs(\cO) := (x_1, \dots, x_n),
\]
where $x_i = a_i \times \{1/2\} \cap b_i \times \{1/2\}$.
See Figure~\ref{fig:15} for an example of a planar open book $\cO$,
the Heegaard diagram $\cH^\vee(\cO)$, and the intersection point $\xs(\cO)$.

\begin{figure}[ht!]
	\centering
	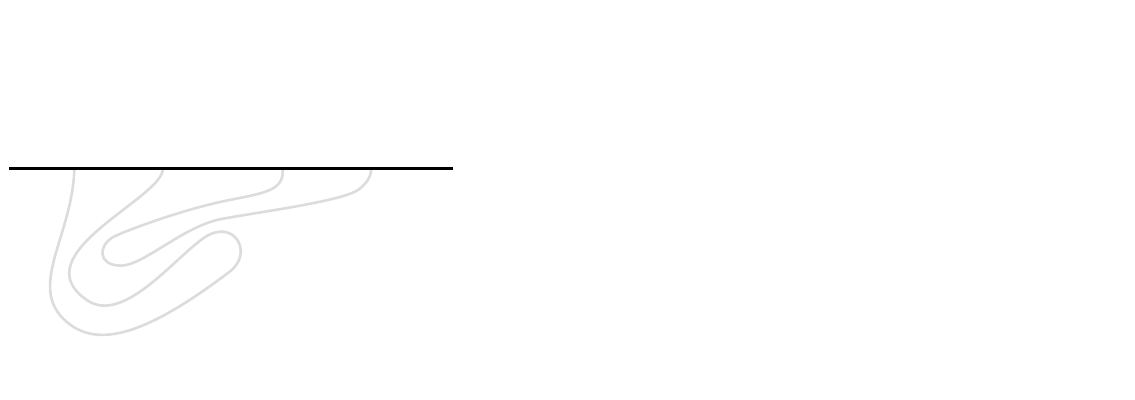
	\caption{A pointed open book $\cO$ (left), and the Heegaard diagram
    $\cH^\vee(\cO)$ of $(Y,\bL)$ (right). The intersection point $\xs(\cO)$ is shown.}
\label{fig:15}
\end{figure}

\subsection{Naturality of the transverse invariant}
\label{sec:naturality}

Naturality of the transverse invariants is slightly subtle. We note that Baldwin,
Vela-Vick, and V\'ertesi \cite{BVVVTransverse}*{Theorem~3.1}
defined the BRAID invariant only up to an automorphism of knot Floer homology, as
their transition maps change the embedding of the link, as well as
the number of basepoints. We now sketch a slightly stronger naturality result.

\begin{prop}\label{prop:naturality}
Suppose that $\bL$ is a multi-pointed, transverse link in the contact 3-manifold $(Y,\xi)$.
If $\cO$ and $\cO'$ are two pointed, embedded open books compatible with
$(Y,\bL,\xi)$, then
\[
\Psi_{\cH^\vee(\cO)\to \cH^{\vee}(\cO')}([\xs(\cO)])=[\xs(\cO')],
\]
where $\Psi_{\cH^\vee(\cO)\to \cH^\vee(\cO')}$ denotes the map from naturality~\cite{JTNaturality},
and $\xs(\cO)$ and $\xs(\cO')$ are the canonical intersection
points representing the transverse invariant.
\end{prop}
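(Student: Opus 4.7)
The plan is to reduce the statement to the naturality of the Honda--Kazez--Mati\'c (HKM) contact class in sutured Floer homology. By Proposition~\ref{prop:pointed-to-partial}, the passage $\cO \mapsto \cO^\circ$ turns each embedded pointed open book of $(Y,\bL,\xi)$ into an embedded partial open book of the \emph{same} sutured contact manifold $(Y(\bL),\xi^{\pitchfork}_{\bL})$. So $\cO^\circ$ and $(\cO')^\circ$ are two compatible partial open books for one sutured contact manifold.

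Next I would compare Heegaard diagrams. The BVVV diagram $\cH^\vee(\cO) = (\Sigma,\bs,\as,\ws,\zs)$ is built by doubling the page, choosing a basis of arcs $\ve{a}$ with a small positive push-off $\ve{b}$, and inserting $h$ on one side. This is exactly the HKM recipe for $(\cO^\circ)^\vee$, with the additional datum of the basepoints $\ws,\zs$ on the doubled pages; throwing away the basepoints reduces $\cH^\vee(\cO)$ to the HKM sutured diagram of $(\cO^\circ)^\vee$, and the canonical intersection point $\xs(\cO)$ matches the HKM contact-class representative. Under the canonical isomorphism $\SFH(-Y(\bL)) \cong \HFLh(-Y,-\bL)$, one therefore has $[\xs(\cO)] = EH(\xi^{\pitchfork}_{\bL})$ and likewise for $\cO'$.

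The rest is to match naturality maps. The HKM class is a well-defined element of $\SFH(-Y(\bL))$ by the naturality of sutured Floer homology due to the first author and Thurston~\cite{JTNaturality}, together with the theorem of Honda--Kazez--Mati\'c that any two compatible partial open books for a sutured contact manifold are related by a sequence of positive partial-open-book stabilizations, each of which preserves the contact class. The link Floer naturality map $\Psi_{\cH^\vee(\cO)\to \cH^\vee(\cO')}$ is built out of the standard Heegaard moves (isotopy, handleslide, stabilization) plus basepoint moves and basepoint stabilizations; the first three agree with the sutured transition maps, and a basepoint stabilization of $\cO$ corresponds, via the last part of the proof of Proposition~\ref{prop:pointed-to-partial}, to a positive Markov stabilization of the pointed open book, which in turn induces a partial-open-book stabilization preserving the HKM class. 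Combining these identifications yields $\Psi_{\cH^\vee(\cO)\to \cH^\vee(\cO')}([\xs(\cO)]) = [\xs(\cO')]$.

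The main obstacle I foresee is the bookkeeping step of verifying that the basepoint-moving and basepoint-stabilization pieces of $\Psi$ are, under the identification of diagrams above, literally the same as the sutured transition maps arising from a partial-open-book stabilization, rather than merely agreeing up to an automorphism of $\SFH$. This needs a careful local comparison analogous to the one in Proposition~\ref{prop:pointed-to-partial}. The extension to $\circ = -$ is then formal, using the $\bF[v]$-equivariant refinement of the HKM class attached to the $\zs$-basepoints, since all moves above commute with the $v$-action on $\CFL^-$.
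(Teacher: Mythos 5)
Your proposal takes essentially the same route as the paper: reduce via Proposition~\ref{prop:pointed-to-partial} to the statement that $\xs(\cO)$ represents the Honda--Kazez--Mati\'c contact class of $(Y(\bL),\xi^{\pitchfork}_{\bL})$ under the identification $\SFH(-Y(\bL))\iso\HFLh(-Y,-\bL)$, and then invoke the naturality of that class through the relative Giroux correspondence for partial open books. Your extra care about matching the basepoint-stabilization pieces of $\Psi$ with partial-open-book stabilizations is exactly the content the paper's sketch leaves implicit, so the proposal is correct and consistent with the paper's argument.
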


\begin{proof}
The key ingredients for the proof of Proposition~\ref{prop:naturality} that
differ from the standard proofs of the invariance of the contact class,
are as follows. By Proposition~\ref{prop:pointed-to-partial}, if $\cO$ is a
pointed open book compatible with $(Y,\bL,\xi)$, then the partial open book
$\cO^\circ$ described in Section~\ref{sec:partial-open-books} is adapted to
the contact structure $\xi_{\bL}^\pitchfork$ on the sutured manifold $Y(\bL)$.
Furthermore, it is not hard to see that any partial open book for
$(Y(\bL),\xi_{\bL}^{\pitchfork})$ may be used to compute the transverse
invariants (even the minus versions). Hence,
Proposition~\ref{prop:naturality} is proven by following the proof of the
naturality of the sutured contact class of Honda, Kazez, and Mati\'c~\cite{HKMSutured}
using the relative Giroux correspondence for partial open books,
applied to $(Y(\bL), \xi^{\pitchfork}_{\bL})$.
\end{proof}

\subsection{Transverse invariants and positive saddle cobordisms}

In this section, we show that the dual of the link cobordism map for a
positive saddle  preserves the transverse invariant. We need the following
definition:

\begin{define}
Suppose that $(W,\cS)\colon (Y,\bL_0)\to (Y,\bL_1)$ is a decorated link cobordism, where
$W = [a,b] \times Y$ and $\cS = (S,\cA)$. We say that $(W,\cS)$ is a \emph{standard
$\zs$-saddle cobordism} if the following hold:
\begin{enumerate}
\item The projection $\pi \colon [a,b] \times Y \to [a,b]$ restricts to a Morse function on $S$,
    with a single index 1 critical point.
\item The index 1 critical point of $\pi|_S$ occurs in the subregion $S_{\zs} \subset S\setminus \cA$.
\item The restriction $\pi|_{\cA}$ has no critical points.
\end{enumerate}
\end{define}

A \emph{standard $\ws$-saddle cobordism} is defined similarly.

\begin{lem}\label{lem:saddles}
Suppose that $(W,\cS) \colon (Y,\bL_0,\xi) \to (Y,\bL_1,\xi)$ is an ascending link
cobordism with $W = [a,b] \times Y \subset \Symp(Y,\xi)$, which is a standard
$\zs$-saddle cobordism, whose critical point is positive. Then
\[
\left(F_{W, \cS}^\circ\right)^\vee\left(\tra^\circ(\bL_1)\right) = \tra^\circ(\bL_0),
\]
for $\circ \in \{\wedge,-\}.$
\end{lem}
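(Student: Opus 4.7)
The plan is to combine Lemma~\ref{lem:ascending-index-1-half-twist} with a chain-level triangle computation, in the spirit of Ozsv\'ath and Szab\'o's argument that contact $+1$ surgery preserves the contact invariant. By the duality formula~\eqref{eq:duality-maps}, it suffices to show that the turned-around cobordism map
\[
F^\circ_{(W,\cS)^\vee}\colon\HFL^\circ(-Y,-\bL_1)\to\HFL^\circ(-Y,-\bL_0)
\]
sends $\tra^\circ(\bL_1)$ to $\tra^\circ(\bL_0)$. Applying Lemma~\ref{lem:ascending-index-1-half-twist}, I pick compatible pointed open books $\cO_0=(\page,\ps,h)$ and $\cO_1=(\page,\ps,\tw_\gamma^+\circ h)$ for $(Y,\bL_0,\xi)$ and $(Y,\bL_1,\xi)$, respectively, where $\gamma\subset\page$ is an embedded arc between two distinct basepoints of $\ps$. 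The sign of the half-twist is $+$ by the positivity assumption on the saddle, and both endpoints of $\gamma$ correspond to $\zs$-basepoints because the saddle lies in $\cS_\zs$ and $\pi|_\cA$ has no critical points.

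Next, I select a basis of arcs $\ve{a}=(a_1,\ldots,a_n)$ on $\page$ so that $a_1$ is isotopic to $\gamma$ rel endpoints and the support of $\tw_\gamma^+$ lies in a small neighborhood $N$ of $a_1$ disjoint from every other basepoint and basis arc. After a preliminary isotopy, I may further assume $h(b_j)\cap N=\varnothing$ for $j\neq 1$. The Heegaard link diagrams $\cH^\vee(\cO_0)=(\Sigma,\bs_0,\as,\ws,\zs)$ and $\cH^\vee(\cO_1)=(\Sigma,\bs_1,\as,\ws,\zs)$ then agree outside a small region, and the curve $\beta_1^{(1)}\in\bs_1$ is obtained from $\beta_1^{(0)}\in\bs_0$ by the band-move induced by the positive half-twist. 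Using the description of cobordism maps for elementary $\zs$-saddle cobordisms in~\cite{ZemCFLTQFT}, I obtain that $F^\circ_{(W,\cS)^\vee}$ is computed on chains by the triangle map
\[
\xs\mapsto F_{\bs_1,\bs_0,\as}(\xs\otimes\Theta^+_{\bs_1,\bs_0}),
\]
where $\Theta^+_{\bs_1,\bs_0}$ is the canonical top-graded generator associated to the pair $(\bs_1,\bs_0)$.

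The main step is to evaluate this triangle map on $\xs(\cO_1)\otimes\Theta^+_{\bs_1,\bs_0}$. Because $\bs_0$ and $\bs_1$ agree outside $N$, this reduces to a local count inside $N$: I expect that there is a unique index-$0$ holomorphic triangle class with non-negative domain multiplicities and vanishing multiplicity at each $\ws$- and $\zs$-basepoint connecting $\xs(\cO_1)\otimes\Theta^+_{\bs_1,\bs_0}$ to $\xs(\cO_0)$, and that every other triangle class contributing to the hat cobordism map either violates domain positivity or passes through a basepoint. The hardest part will be this local topological identification: verifying that $\tw_\gamma^+$ applied to $\beta_1^{(0)}$ produces precisely the band-model picture for which the desired unique small empty triangle exists. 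Passage from the hat to the minus flavor then follows by tracking the $v$-weights of the triangle map, using that the canonical triangle has zero $\zs$-multiplicity, as in the analogous arguments of~\cite{BVVVTransverse}. Independence from the choice of compatible pointed open book is guaranteed by Proposition~\ref{prop:naturality}.
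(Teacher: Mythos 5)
Your overall strategy coincides with the paper's: reduce via the duality formula~\eqref{eq:duality-maps} to the reversed saddle, use Lemma~\ref{lem:ascending-index-1-half-twist} to present $\cO_0$ and $\cO_1$ as pointed open books differing by a positive half-twist along a path $\g$ between two basepoints, and compute the saddle map as a triangle count in a triple diagram whose two families of $\bs$-curves differ only near the twist. Two points, however, keep this from being a proof. First, the arc basis you describe cannot exist: a basis arc $a_1$ has $\d a_1\subset \d\page$, whereas $\g$ joins two \emph{interior} basepoints $p_1,p_2\in\ps$, so $a_1$ is never isotopic to $\g$ rel endpoints; moreover the support of $\tw_\g^{+}$ necessarily contains both $p_1$ and $p_2$, so it cannot lie in a neighborhood of a single basis arc when each component of $\page\setminus\ve{a}$ contains exactly one basepoint. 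The condition one actually needs is on the images of the arcs: choose $\ve{a}$ so that $h(a_i)\cap\g\neq\emptyset$ for exactly one $i$, with $|h(a_i)\cap\g|=1$; the twist then changes only $h(b_i)\times\{0\}$ on the $\bar{\page}\times\{0\}$ half of $\Sigma$, which is what produces the clean local picture.

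Second, the decisive step is left as an expectation rather than verified: the existence and uniqueness of the index-$0$ triangle with corners at $\Theta_{\b,\b'}^{\ws}$, $\xs(\cO_1)$, and $\xs(\cO_0)$, with vanishing multiplicity at the $\ws$-basepoints, \emph{is} the content of the lemma (the paper's model computation in Figure~\ref{fig:14}), and without carrying it out nothing has been proved. Relatedly, your ``canonical top-graded generator'' $\Theta^{+}_{\bs_1,\bs_0}$ is ambiguous: in the multi-pointed setting $\bT_{\b}\cap\bT_{\b'}$ has two canonical generators, $\Theta_{\b,\b'}^{\ws}$ (top in $\gr_{\ws}$) and $\Theta_{\b,\b'}^{\zs}$ (top in $\gr_{\zs}$), and the reversed standard $\zs$-saddle map is the one obtained by inserting $\Theta_{\b,\b'}^{\ws}$; inserting the other generator gives the $\ws$-saddle map, for which the claimed identity does not hold. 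Making this choice explicit, and then performing the local count, is what remains to be done.
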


\begin{proof}
By Lemma~\ref{lem:ascending-index-1-half-twist}, if $\bL_0$ and $\bL_1$ are
transverse links in $(Y,\xi)$, and $\bL_1$ differs from $\bL_0$ by a positive saddle
attachment, then we may pick embedded, pointed open books
\[
\cO_0=(\page, h_0, \ve{p})\quad \text{and} \quad \cO_1=(\page,h_1,\ve{p})
\]
for $(Y,\bL_0,\xi)$ and $(Y,\bL_1,\xi)$, respectively, such that
$h_1=\tw_\g^+\circ h_0$ for some path $\g$ connecting two basepoints
$p_1$, $p_2 \in \ps$, where $\tw^+_\g$ denotes a positive half-twist along $\g$.

Pick a basis of arcs $\ve{a} = (a_1,\dots, a_n)$ of $\page $ such that there is
exactly one arc, $a_i$, such that $h(a_i)\cap \g$ is non-empty, and
furthermore,
\[
|h(a_i) \cap \g| = 1.
\]

Let $(\Sigma, \as, \bs, \ws, \zs)$ and $(\Sigma, \as, \bs', \ws, \zs)$ be the multi-pointed link diagrams
for $(Y, \bL_0)$ and $(Y, \bL_1)$ constructed from the pointed open books $\cO_0$ and $\cO_1$
and the arc basis $\ve{a}$, respectively, as described in Section~\ref{sec:construction-braid}.
Consider the triple diagram $(\Sigma,\as,\bs,\bs',\ws,\zs)$,
where $\as$, $\bs$, and $\bs'$ intersect on $\page \times \{\tfrac{1}{2}\}$
as shown on the right of Figure~\ref{fig:14}.

\begin{figure}[ht!]
\centering
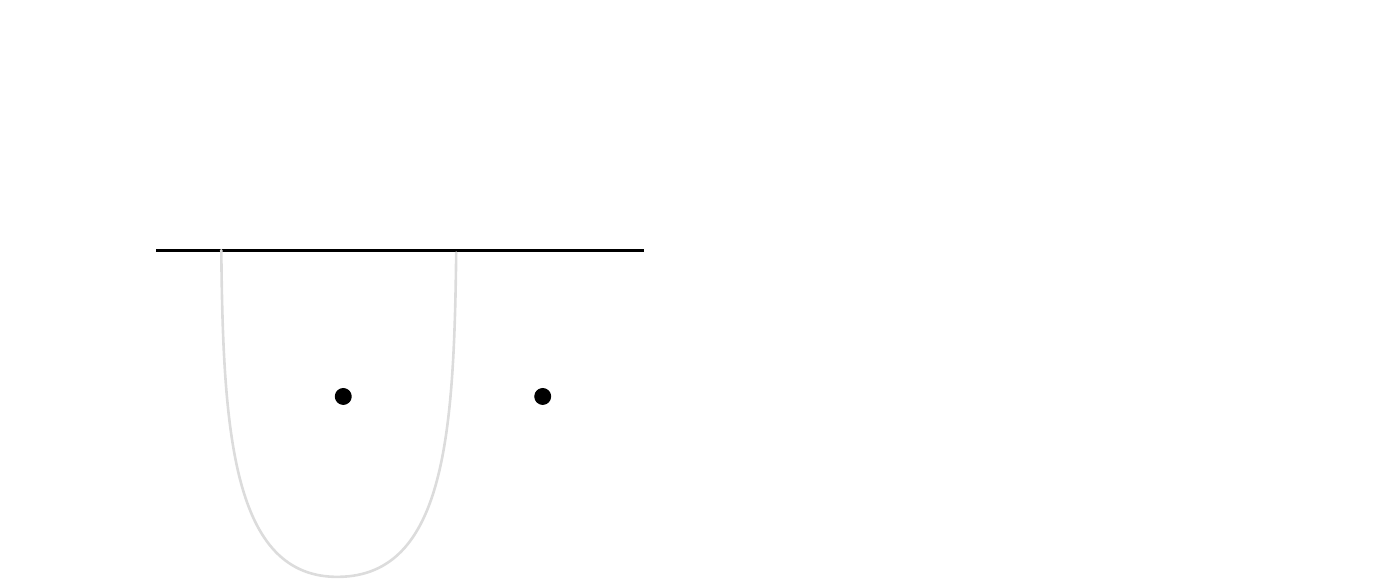
\caption{This model computation shows that the reverse of a positive saddle cobordism
preserves $\tra^\circ$. Note that the top half of the right-hand side represents
$\bar{\page} \times \{0\}$. Hence a positive half-twist on $\page$
appears as a negative one on $\bar{\page} \times \{0\}$.}
\label{fig:14}
\end{figure}

Let $B \subset Y$ denote the band for the link $L_0$, obtained by projecting
the saddle cobordism into $Y$. In terms of
\cite{ZemCFLTQFT}*{Definition~6.2}, the triple $(\Sigma,\as,
\bs,\bs',\ws,\zs)$ is \emph{subordinate} to the band $B$. Consequently, we
may use this triple to compute the link cobordism maps for the saddle.
Similarly, the triple $(\Sigma,\bs,\bs',\as,\ws,\zs)$ is subordinate to the
reversed band $B^\vee$, now viewed as being attached to $\bL_1$ to form $\bL_0$.

There are two canonical intersection points
\[
\Theta_{\b,\b'}^{\ws}, \Theta_{\b,\b'}^{\zs} \in \bT_{\b}\cap \bT_{\b'},
\]
distinguished by their multi-gradings. The element
$\Theta_{\b,\b'}^{\ws}$ is the top $\gr_{\ws}$-graded intersection point of
$\bT_{\b} \cap \bT_{\b'}$, while $\Theta_{\b,\b'}^{\zs}$ is the top
$\gr_{\zs}$-graded intersection point. The type-$\ws$ and type-$\zs$ saddle
maps are defined via the triangle counts
\begin{equation}\label{eq:saddles-def}
F_{B^\vee}^{\ws}(-) := F_{\b,\b',\a}(\Theta_{\b,\b'}^{\zs} \otimes -) \qquad \text{and} \qquad
F_{B^\vee}^{\zs}(-):= F_{\b,\b',\a}(\Theta_{\b,\b'}^{\ws} \otimes -).
\end{equation}
The map $F_{B^\vee}^{\ws}$ in equation~\eqref{eq:saddles-def} is the
cobordism map for a standard $\ws$-saddle cobordism, and the
$F_{B^\vee}^{\zs}$ is the map for the cobordism map for a standard
$\zs$-saddle cobordism. It is straightforward to see that
\[
F_{B^\vee}^{\zs}=\left(F_{B}^{\zs}\right)^\vee,
\]
as a special case of equation~\eqref{eq:duality-maps}.

For the main claim, it is sufficient to prove
\begin{equation}\label{eq:Fwt1=t0}
F^{\zs}_{B^\vee}(\tra^\circ(\cO_1))=\tra^\circ(\cO_0).
\end{equation}
Equation~\eqref{eq:Fwt1=t0} is verified via the model computation shown in
Figure~\ref{fig:14}, since there is a unique holomorphic triangle with
corners at $\Theta_{\b,\b'}^{\ws}$ and $\xs(\cO_1)$ that has
zero multiplicity at the $\ws$ basepoints. Furthermore, this triangle has
index 0, and has third corner at $\xs(\cO_0)$. The proof is complete.
\end{proof}

\subsection{Transverse invariants and positive birth cobordisms}

\begin{lem}\label{lem:birth}
Suppose that $\bL$ is a multi-pointed, transverse link in the contact 3-manifold $(Y,\xi)$,
and $\bU$ is a doubly pointed, transverse unknot in $Y$, which is unlinked from $\bL$.
Suppose that
\[
(W,\cS) \colon (Y,\bL,\xi) \to (Y,\bL \cup \bU, \xi)
\]
is an ascending link
cobordism in $W = [a,b] \times Y \subset \Symp(Y,\xi)$ such that $\cS$ is equal
to $L \times I \sqcup \cD_0$, where $\cD_0$ is a positive birth. Furthermore, assume
the dividing set on $\cD_0$ consists of a single arc and
is $I$-invariant on $L \times I$. Then
\[
\left( F^\circ_{W,\cS}\right)^\vee\left(\tra^\circ(\bL\cup \bU)\right)=\tra^\circ(\bL),
\]
for $\circ\in \{\wedge,-\}$.
\end{lem}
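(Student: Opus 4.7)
The plan is to reduce the lemma to an elementary model computation on Heegaard diagrams, in the spirit of Lemma~\ref{lem:saddles}, by translating the positive index~$0$ critical point into a stabilization of a compatible pointed open book. Using Lemma~\ref{lem:open-books-index-0}, I would first fix a pointed open book $\cO_0 = (\page,\ve{p},h)$ for $(Y,\bL,\xi)$ together with a pointed open book $\cO_1 = (\page, \ve{p}\cup\{q\}, h)$ for $(Y,\bL\cup\bU,\xi)$, where $q \in \Int(\page)$ is placed near $\d\page$. Next, I would extend a basis of arcs $\ve{a} = (a_1,\dots,a_n)$ for $\cO_0$ by a short arc $a_{n+1}$ that cuts off a small half-disk in $\page$ containing only $q$, giving a basis of arcs for $\cO_1$.

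With these choices, the Heegaard link diagram $\cH(\cO_1)$ is obtained from $\cH(\cO_0)$ by an elementary local move: since $h$ is the identity near $\d\page$, the new curves $\a_{n+1}$ and $\b_{n+1}$ are small circles on $\Sigma$, each bounding a disk and meeting transversely in exactly two points, namely $x_{n+1}$ on $\page\times\{1/2\}$ and a second point on $\bar\page \times \{0\}$, together with new basepoints $w_{n+1}\in \page\times\{1/2\}$ and $z_{n+1}\in \bar\page\times\{0\}$. Equivalently, $\cH(\cO_1)$ is the disjoint union of $\cH(\cO_0)$ with a standard diagram for a doubly-pointed unknot in $S^2$. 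Invoking the description of elementary cobordism maps from \cite{ZemCFLTQFT}, the link cobordism map $F^\circ_{W,\cS}$ associated to the positive birth $\cD_0$ with its prescribed $I$-invariant single-arc decoration is realized, via these Heegaard diagrams, as the chain-level map $\xs \mapsto \xs \cup \{x_{n+1}\}$. In particular, $F^\circ_{W,\cS}(\xs(\cO_0)) = \xs(\cO_1)$, and dualizing using equation~\eqref{eq:duality-maps} shows that $(F^\circ_{W,\cS})^\vee = F^\circ_{(W,\cS)^\vee}$ is the corresponding destabilization map, which carries $[\xs(\cO_1)] = \tra^\circ(\bL\cup\bU)$ to $[\xs(\cO_0)] = \tra^\circ(\bL)$.

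The hard part is the second step: matching this local Heegaard-level stabilization with Zemke's elementary birth map while tracking where the dividing arc lies. Concretely, one must verify that it is the top-page intersection $x_{n+1}$, and not its partner on $\bar\page \times \{0\}$, that is picked out by the birth map---equivalently, that it is $x_{n+1}$ which survives the dual destabilization. This follows from the fact that the standard single-arc dividing set on the positive birth disk $\cD_0$ separates $w_{n+1}$ from $z_{n+1}$ in exactly the way that puts $x_{n+1}$ in the top Alexander grading of the local $\HFLh(S^2, U)$ summand, analogously to the model triangle computation used in the proof of Lemma~\ref{lem:saddles}. Once this placement check is in hand, the conclusion for $\circ = \wedge$ transfers to $\circ = -$ directly, because the birth and destabilization maps are $\bF[v]$-equivariant and defined by the same local model.
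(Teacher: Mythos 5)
Your overall route is the same as the paper's: use Lemma~\ref{lem:open-books-index-0} to realize the birth as adding a basepoint $p'$ near $\d \page$ together with a short arc cutting off a half-disk containing it, observe that the resulting diagram splits the chain complex as $\CFL^\circ(\cH^\vee(\cO)) \otimes \langle \theta^-,\theta^+\rangle$ with $\tra^\circ(\cO') = \tra^\circ(\cO)\times \theta_{\mathrm{top}}$, and then identify the dual of the birth map with the death map, which kills one generator and sends the other to $\xs$. So the skeleton is right, and your identification of the crux --- pinning down \emph{which} of the two new intersection points is selected --- is also right.

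However, the justification you offer for that crux does not work. For a split, doubly pointed unknot component the two new intersection points lie in the \emph{same} Alexander grading: the two lunes between $\a_{n+1}$ and $\b_{n+1}$ are contained in the symmetric difference of the disks they bound, while $w_{n+1}$ and $z_{n+1}$ both sit in the intersection of those disks, so each lune has zero multiplicity at both new basepoints and the two generators differ by $(+1,+1)$ in $(\gr_{\ws},\gr_{\zs})$, i.e.\ by Maslov grading only. (Contrast this with the \emph{linked} quasi-stabilization of Lemma~\ref{lem:quasi-stabilization}, where one lune contains $w$ and the other contains $z$, so the two generators really are separated by Alexander grading; you appear to have imported that local model into the wrong setting.) Consequently an Alexander-grading argument cannot tell $\theta^+$ from $\theta^-$ here. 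The paper instead distinguishes them by Maslov grading and simply invokes the known formula for the death cobordism map from \cite{ZemCFLTQFT}, namely $(F^\circ_{W,\cS})^\vee(\xs\times\theta^-)=\xs$ and $(F^\circ_{W,\cS})^\vee(\xs\times\theta^+)=0$, together with the observation that $\tra^\circ(\cO')=\tra^\circ(\cO)\times\theta^-$ by construction. Your argument can be repaired by replacing the Alexander-grading claim with either this citation or a Maslov-grading-shift computation for the single-arc birth decoration. A minor further imprecision: $\cH(\cO_1)$ is not a disjoint union of $\cH(\cO_0)$ with a separate unknot diagram --- the new curves live on the same Heegaard surface near $\d\page$ --- although the chain complex does factor as a tensor product as you use it.
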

\begin{proof}

By Lemma~\ref{lem:open-books-index-0}, we may construct pointed open books $\cO$ and
$\cO'$ for $\bL$ and $\bL\cup \bU$, respectively, such that
$\cO = (\page,h,\ve{p})$ and $\cO' = (\page,h,\ve{p} \cup \{p'\})$, where $p'$ is a new
basepoint which is close to $\d \page$; see Figure~\ref{fig:20}.

There is a chain isomorphism
\[
\CFL^\circ(\cH^{\vee}(\cO')) \iso  \CFL^\circ(\cH^\vee(\cO)) \otimes \langle \theta^-, \theta^+\rangle,
\]
where $\theta^-$ and $\theta^+$ are distinguished by the Maslov grading. By construction,
\[
\tra^\circ(\cO')=\tra^\circ(\cO)\times \theta^-.
\]
The map $\left(F^\circ_{W,\cS}\right)^\vee$ coincides with the map
for a death cobordism, which is given by the formula
\[
(F^\circ_{W, \cS})^{\vee}(\xs\times \theta)=\begin{cases} \xs& \text{ if } \theta=\theta^-,\\
0& \text{ if } \theta=\theta^+,
\end{cases}
\]
extended equivariantly over the action of $v$, if $\circ=-$.  The result immediately follows.
\end{proof}

\begin{figure}[ht!]
\centering
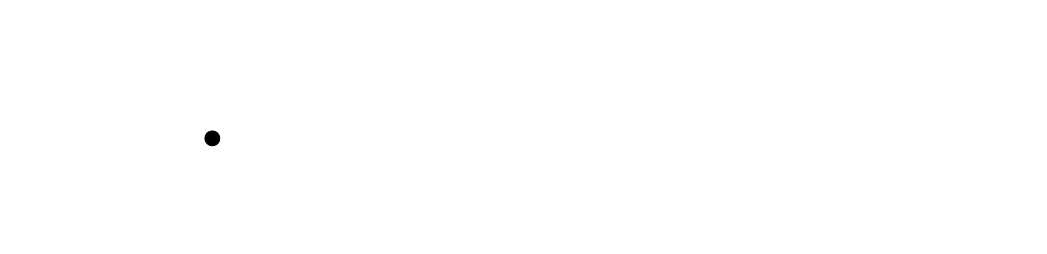
\caption{Left: Adding a new point $p'$ and an arc $a'$ to an open book near $\d \page $,
as in Lemma~\ref{lem:birth}.
Right: The resulting Heegaard diagram.}
\label{fig:20}
\end{figure}

\subsection{Transverse invariants and quasi-stabilizations}

In this section, we prove that the transverse invariant is functorial with
respect to certain simple decorations on cylindrical link cobordisms. Our
proof is a repackaging of Baldwin, Vela-Vick and V\'{e}rtesi's proof that the
BRAID invariant is invariant under positive Markov stabilizations
\cite{BVVVTransverse}*{p.~948}.

We recall the TQFT framework of the third author for adding and removing
basepoints from a link, as described in \cite{ZemQuasi} and
\cite{ZemCFLTQFT}. Suppose that $(L,\ws,\zs)$ is a multi-pointed link in $Y$,
and $(w,z)$ is a new pair of basepoints on $L$, which are in the same
component of $L\setminus (\ws\cup \zs)$. Suppose, for definiteness, that $w$
follows $z$ with respect to the orientation of $L$. The third author
described chain maps
\[
\begin{split}
S_{w,z}^+, T_{w,z}^+ &\colon \CFL^\circ(Y,L,\ws,\zs)\to \CFL^\circ(Y,L,\ws\cup \{w\}, \zs\cup \{z\}) \text{ and } \\
S_{w,z}^-, T_{w,z}^- &\colon \CFL^\circ(Y,L,\ws\cup \{w\}, \zs\cup \{z\})\to \CFL^\circ(Y,L,\ws,\zs).
\end{split}
\]
The maps $S_{w,z}^{\pm}$ and $T_{w,z}^{\pm}$ correspond to special dividing
sets on the cylinder $I\times L$, as shown in Figure~\ref{fig:8}. Note that
any dividing set on $I\times L$ can be obtained, up to isotopy, by composing
such dividing sets.

The maps $S_{w,z}^+$ and $T_{w,z}^+$ are obtained by stabilizing the
Heegaard diagram in a special way; see Figure~\ref{fig:9}. This operation is
called a \emph{linked index 0/3} stabilization in \cite{BVVVTransverse}, and
a \emph{quasi-stabilization} in \cite{MOIntegerSurgery}. The third author
showed that the operation is natural \cite{ZemQuasi}*{Theorem~A}, and gave a link
cobordism interpretation \cite{ZemCFLTQFT}*{Section~4} using the decorated cobordisms in
Figure~\ref{fig:8}.

\begin{figure}[ht!]
\centering
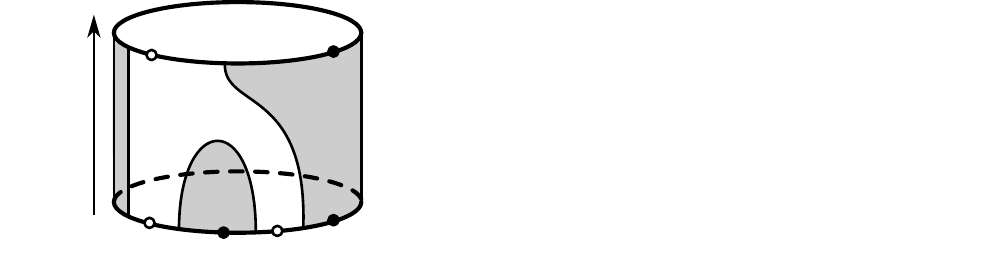
\caption{Dividing sets for the maps $T_{w,z}^+$, $T_{w,z}^-$, $S_{w,z}^+$, and $S_{w,z}^-$.}
\label{fig:8}
\end{figure}

\begin{lem}\label{lem:quasi-stabilization}
Suppose that $\bL=(L,\ws,\zs)$ is a multi-pointed, transverse link in $(Y,\xi)$, and $\bL'$ is obtained by adding two new, adjacent basepoints $w$ and $z$ to $\bL$. For $\circ \in \{\wedge,-\}$, we have
\[
\left(S_{w,z}^+\right)^\vee(\tra^\circ(Y,\bL',\xi)) = \tra^\circ(Y, \bL,\xi) \quad
 \text{and} \quad \left(T_{w,z}^-\right)^\vee(\tra^\circ(Y,\bL,\xi)) = \tra^\circ(Y,\bL',\xi).
\]
\end{lem}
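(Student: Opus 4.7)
The plan is to deduce both identities from an explicit local computation in a Heegaard diagram built from a pointed open book, reducing the lemma to the fact that the quasi-stabilization maps admit very simple descriptions in a suitable local model.

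First, adding a pair of basepoints $(w,z)$ to $\bL$ on the transverse side corresponds, on the pointed open book side, to a positive Markov stabilization as discussed in Section~\ref{sec:partial-open-books}. Concretely, following the argument of Lemma~\ref{lem:open-books-index-0}, we may pick compatible embedded pointed open books $\cO = (\page, h, \ps)$ for $(Y,\bL,\xi)$ and $\cO' = (\page, h, \ps\cup\{p'\})$ for $(Y,\bL',\xi)$, where $p'$ is a new basepoint placed near $\partial\page$ in the region corresponding to the arc of $L$ between $w$ and $z$; the monodromy $h$ is unchanged because $h$ is already the identity near $\partial\page$. Extending a basis of arcs $\ve{a}$ for $\cO$ by an arc $a'$ that cuts off a small disk containing only $p'$ produces a basis $\ve{a}\cup\{a'\}$ for $\cO'$.

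Second, with these choices, the Heegaard diagrams $\cH^\vee(\cO)$ and $\cH^\vee(\cO')$ from Section~\ref{sec:construction-braid} differ by exactly a local quasi-stabilization in the sense of the third author~\cite{ZemQuasi}: $\cH^\vee(\cO')$ is obtained from $\cH^\vee(\cO)$ by inserting the new curves $\alpha'=a'\times\{1/2\}\cup a'\times\{0\}$ and $\beta'=b'\times\{1/2\}\cup h(b')\times\{0\}$, which meet transversely in two points $\theta^{\ws}\in\page\times\{1/2\}$ and $\theta^{\zs}\in\bar\page\times\{0\}$, together with the two new basepoints $w$ and $z$ situated in the small bigons of $\Sigma\setminus(\alpha'\cup\beta')$; see Figure~\ref{fig:20} for a picture of the analogous local model. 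By construction, the distinguished intersection point representing the transverse invariant satisfies
\[
\xs(\cO')=\xs(\cO)\cup\{\theta^{\ws}\}.
\]

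Third, recall from~\cite{ZemQuasi, ZemCFLTQFT} that the quasi-stabilization chain maps have the following local form on this model: $S_{w,z}^+(\xs)=\xs\otimes\theta^{\zs}$ and $T_{w,z}^-(\xs\otimes\theta^{\ws})=\xs$, $T_{w,z}^-(\xs\otimes\theta^{\zs})=0$, extended $\bF[v]$-equivariantly in the minus version. Dualizing via the canonical isomorphism $\CFL^\circ(\Sigma,\bs,\as,\ws,\zs)\cong\CFL^\circ(\Sigma,\as,\bs,\ws,\zs)^\vee$, which swaps the two sheets of $\Sigma$ and thus exchanges the roles of $\theta^{\ws}$ and $\theta^{\zs}$, yields on the $(-Y,-\bL)$-side the formulas
\[
(S_{w,z}^+)^\vee(\xs\otimes\theta^{\ws})=\xs,\qquad (T_{w,z}^-)^\vee(\xs)=\xs\otimes\theta^{\ws},
\]
with all other components zero. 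Applying these to $\xs(\cO')=\xs(\cO)\otimes\theta^{\ws}$ and $\xs(\cO)$, respectively, and passing to homology while invoking Proposition~\ref{prop:naturality} to identify the resulting homology classes with $\tra^\circ(Y,\bL,\xi)$ and $\tra^\circ(Y,\bL',\xi)$, yields both identities.

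The main obstacle is the bookkeeping of conventions: matching the two local generators $\theta^{\ws},\theta^{\zs}$ with the gradings distinguishing the ``$+$'' and ``$-$'' maps of Figure~\ref{fig:8}, and checking that the duality swap indeed converts $S^+$ into the dual map killing $\theta^{\zs}$ and $T^-$ into the dual map creating $\theta^{\ws}$. Once the local identifications are pinned down, the verification is a direct model computation analogous to the ones in Lemma~\ref{lem:saddles} and Lemma~\ref{lem:birth}.
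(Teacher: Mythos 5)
Your overall strategy -- reduce to a local model Heegaard diagram coming from a pointed open book, identify the transverse invariant with a canonical generator, and apply the explicit formulas for the quasi-stabilization maps before dualizing -- is the same as the paper's. However, there is a genuine error in your local model. You claim that the pointed open book for $(Y,\bL',\xi)$ is $\cO'=(\page,h,\ps\cup\{p'\})$ with the monodromy $h$ \emph{unchanged}, citing the argument of Lemma~\ref{lem:open-books-index-0}. But a pointed open book with an extra point fixed by $h$ (forming its own orbit) describes the original link together with a \emph{new transverse unknot component}: the link determined by $(\page,h,\ve{p})$ is $\ve{p}\times I$ glued up by $h$, so components correspond to $h$-orbits of $\ve{p}$. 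That is exactly the birth model of Lemma~\ref{lem:birth} and Figure~\ref{fig:20}, not the operation of adding a pair of basepoints $w,z$ to an existing component. To put the new basepoints on the same component of $L$, one must perform a positive Markov stabilization: set $h'=\tw^+_\g\circ h$ for a path $\g$ joining the new point to an old point of $\ps$, as in Figure~\ref{fig:9}. Your diagram is therefore the wrong diagram, and the subsequent identification of the new $\b'$ curve, of the bigons containing $w$ and $z$, and of $\xs(\cO')$ does not describe the quasi-stabilization.

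Two further points get scrambled as a consequence. First, with the correct (Markov-stabilized) model one finds $\tra^\circ(\cO')=\tra^\circ(\cO)\times\theta_{\zs}$, not $\xs(\cO)\cup\{\theta^{\ws}\}$ as you assert; your sign/grading labels for the two new intersection points are swapped relative to the definitions of $S^{\pm}_{w,z}$ and $T^{\pm}_{w,z}$. Second, your claim that dualizing ``swaps the two sheets of $\Sigma$ and thus exchanges $\theta^{\ws}$ and $\theta^{\zs}$'' is not the right mechanism: duality corresponds to interchanging $\as$ and $\bs$, and the clean way to conclude is to first verify the undualized identities $S^-_{w,z}(\tra^\circ(\bL'))=\tra^\circ(\bL)$ and $T^+_{w,z}(\tra^\circ(\bL))=\tra^\circ(\bL')$ in the stabilized diagram, and then use that $S^+_{w,z}$ is dual to $S^-_{w,z}$ and $T^+_{w,z}$ is dual to $T^-_{w,z}$. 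With the corrected open book model and these adjustments, your argument becomes the paper's proof.
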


\begin{proof}
Our proof follows by analyzing Baldwin, Vela-Vick, and V\'ertesi's proof of the
invariance of the BRAID invariant under positive Markov stabilizations; see
\cite{BVVVTransverse}*{Figure~11}. We pick a pointed open book $\cO=(\page ,h,\ve{p})$ for $(Y,\bL,\xi)$.
We may construct a pointed open book $\cO'=(\page ,h',\ve{p}')$ for $(Y,\bL',\xi)$, as follows.
 We define $\ve{p}'=\ve{p}\cup \{p\}$, where $p$ is a point near $\d \page $. We pick a path $\g$
from $p$ to another point $p'\in \ve{p}$, and set $h'$ to be
$\tw^+_\g \circ h$. We pick a basis of arcs $\ve{a}$ for $\cO$ such that $p$
and $p'$ lie in the same component of $\page  \setminus \ve{a}$, and pick a basis
$\ve{a}'$ for $\cO'$ by adjoining an arc $a$ that bounds a bigon with
$\d \page $ containing $p$.

\begin{figure}[ht!]
\centering
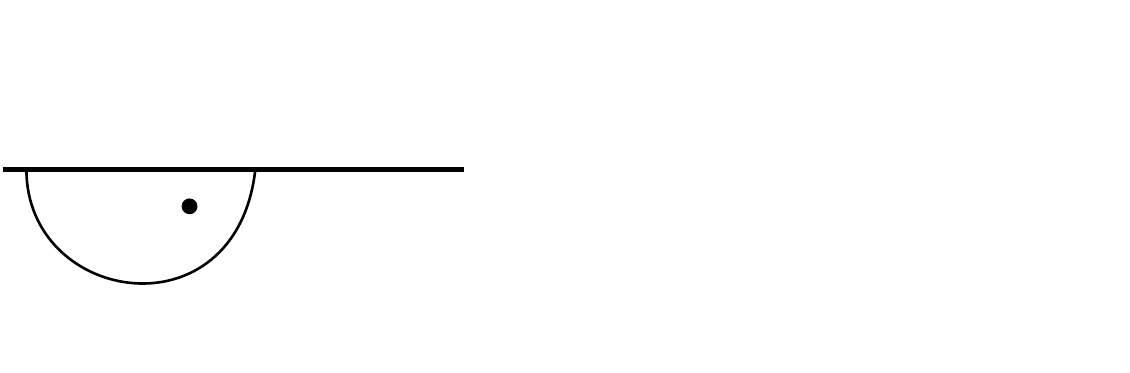
\caption{Left: The effect of a positive Markov stabilization on the open book $\cO$. Right:
Its effect on the Heegaard diagram $\cH^\vee(\cO)$.}
\label{fig:9}
\end{figure}

The Heegaard diagram $\cH(\cO')$ is shown in Figure~\ref{fig:9}.
There is an isomorphism of chain complexes
\[
\CFL^\circ(\cH^\vee(\cO'))\iso \CFL^\circ(\cH^\vee(\cO)) \otimes
\langle \theta_{\ws}, \theta_{\zs} \rangle,
\]
where $\theta_{\ws}$ and $\theta_{\zs}$ are the two intersection points
labeled in Figure~\ref{fig:9}. The point $\theta_{\ws}$ is the higher-graded
intersection point with respect to $\gr_{\ws}$, and
$\theta_{\zs}$ is the higher-graded intersection point with respect to $\gr_{\zs}$.

The maps $T_{w,z}^{\pm}$ and $S_{w,z}^{\pm}$ are defined via the formulas
\[
T_{w,z}^+(\xs) = \xs \times \theta_{\zs}\quad \text{and} \quad
T_{w,z}^-(\xs\times \theta) =
\begin{cases}
\xs &\text{if } \theta = \theta_{\ws}, \\
0& \text{if } \theta = \theta_{\zs},
\end{cases}
\]
and
\[
S_{w,z}^+(\xs) = \xs \times \theta_{\ws}\quad \text{and} \quad
S_{w,z}^-(\xs\times \theta) =
\begin{cases}
0 &\text{if } \theta = \theta_{\ws}, \\
\xs& \text{if } \theta = \theta_{\zs}.
\end{cases}
\]

Note that
\[
\tra^\circ(\cO')=\tra^\circ(\cO)\times \theta_{\zs}.
\]
Hence, we obtain that
\begin{equation}
S_{w,z}^-(\tra^\circ(Y,\bL',\xi))=\tra^\circ(Y,\bL,\xi)\quad \text{and} \quad T_{w,z}^+(\tra^\circ(Y,\bL,\xi))=\tra^\circ(Y,\bL',\xi).\label{eq:quasi-stab-without-dualizing}
\end{equation}
The main  result now follows from equation~\eqref{eq:quasi-stab-without-dualizing}
by noting that $S_{w,z}^+$ is dual to $S_{w,z}^-$, and $T_{w,z}^+$ is dual to $T_{w,z}^-$.
\end{proof}

\subsection{Proof of Theorem~\ref{thm:ascending-Stein}}
\label{sec:proofs}

In this section, we prove Theorem~\ref{thm:ascending-Stein}.
The following terminology will be convenient:

\begin{define}
Suppose that $\cS=(S,\cA)$ is a decorated surface, equipped with a Morse
function $\phi$ such that $\phi|_{\cA}$ is also Morse. We say that a critical
point $p$ of $\phi|_{\cA}$ is of \emph{type $S^+$} if $\phi|_{\cA}$ has a
local minimum at $p$, and the region $\cS_{\ws}$ lies immediately above $p$,
and $\cS_{\zs}$ lies immediately below, as in the reverse of the
left-hand side of Figure~\ref{fig:8}. A critical
point is similarly said to be of \emph{type} $S^-$, $T^+$, or $T^-$ if it
satisfies the analogous configuration shown in Figure~\ref{fig:8}.
\end{define}

We now reformulate the definition of $\ws$-arboreal decorations.

\begin{lem}\label{lem:ascending-restatement}
Suppose that that $\cS=(S,\cA)$ is a surface with divides, with $\d
S=-L_0\cup L_1$. Suppose $\phi\colon S\to [0,1]$ is a Morse function,
and $\phi$ has only index 1 and 2 critical points. Then $\cS$ is
$\ws$-arboreal with respect to $\phi$ if and only if the dividing set $\cA$
may be isotoped so that the following hold:
\begin{enumerate}
\item\label{alt-def:arboreal-1} All index 2 critical points of $\phi$ occur along $\cA$.
\item\label{alt-def:arboreal-2} All index 1 critical points of $\phi$ occur in $\cS_{\zs}$.
\item\label{alt-def:arboreal-3} The function $\phi|_{\cA}$ is Morse, with only
    type $S^-$ and $T^+$ critical points.
\end{enumerate}
\end{lem}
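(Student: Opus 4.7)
The plan is to prove the two implications by passing between the forest $F$ and the dividing set $\cA$. For the ``only if'' direction, I assume $\cS_{\ws}$ is a regular neighborhood of a forest $F$ with the properties of Definition~\ref{def:arboreal-decoration}. Since each tree of $F$ is contractible, each component of $\cS_{\ws}$ is a disk. I will isotope $\cA$ rel $\d\cA$ so that, on each edge $e$ of $F$, the portion of $\cA$ parallel to $e$ consists of two arcs that are small normal translates of $e$, and near each vertex $\cA$ turns around in a standard way. Because $d\phi$ is non-trivial on each edge of $F$, the function $\phi$ is strictly monotone along each edge, hence on the parallel arcs of $\cA$, so all critical points of $\phi|_{\cA}$ occur in small neighborhoods of the vertices of $F$. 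A local analysis at an interior vertex $v$ of $F$ of degree $d$ with exactly one downward edge gives: for $d=1$, one local maximum of $\phi|_{\cA}$ of type $S^-$, coinciding with an index~$2$ critical point of $\phi|_S$ on $\cA$; for $d=2$, no critical point of $\phi|_{\cA}$; and for $d\ge 3$, exactly $d-2$ local minima of type $T^+$, one between each pair of consecutive upward edges. The index~$1$ critical points of $\phi|_S$ must lie in $\cS_{\zs}$, since after the isotopy each disk component of $\cS_{\ws}$ has a Morse-function structure with no interior index~$1$ critical points.

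For the ``if'' direction, I assume conditions \eqref{alt-def:arboreal-1}--\eqref{alt-def:arboreal-3}. Using the hypothesis that $\phi$ has only index~$1$ and $2$ critical points, together with conditions~\eqref{alt-def:arboreal-1} and~\eqref{alt-def:arboreal-2}, the restriction of $\phi$ to each component $D$ of $\cS_{\ws}$ has no interior critical points, and a Morse-theoretic Euler-characteristic count on $D$ (using the boundary critical data: types $S^-$ and $T^+$ on $\cA$, and no critical points on $D\cap \d S$) forces $\chi(D)=1$, so $D$ is a disk. I then construct $F$ inside $\cS_{\ws}$ by placing interior vertices at each type $S^-$ critical point, at a point just above each type $T^+$ critical point in $\cS_{\ws}$, and at each boundary point of $\cA$ on $\d S$. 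The edges are built by following the downward gradient flow of $\phi$ from each vertex inside $\cS_{\ws}$ until it meets another vertex or $\d S$; since there are no interior critical points of $\phi$ inside $\cS_{\ws}$, these flow lines are well defined and assemble into a forest whose regular neighborhood is $\cS_{\ws}$. By construction, $d\phi\ne 0$ on each edge, and at each interior vertex exactly one incident edge descends---namely the edge following the downward gradient out of the $S^-$ local maximum or the $T^+$ valley---verifying the third condition in Definition~\ref{def:arboreal-decoration}.

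The main obstacle is pinning down the correct branching degree $d$ at each type $T^+$ vertex of $F$: this degree equals one plus the number of distinct upward branches of $\cA$ at the valley, which is only visible through the global topology of $\cA$ near the critical point, not from its local shape. A careful analysis of the gradient flow of $\phi$ inside each disk component of $\cS_{\ws}$ produces the right $d$, and the resulting forest satisfies all the properties required by Definition~\ref{def:arboreal-decoration}.
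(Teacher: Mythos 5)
The forward direction has a genuine gap: you never arrange condition \eqref{alt-def:arboreal-1}. The index $2$ critical points of $\phi$ are the local maxima of $\phi$ on $S$, and they sit wherever they sit --- typically in $\cS_{\zs}$, far from the forest $F$ --- so replacing $\cA$ by the boundary of a thin regular neighborhood of $F$ does nothing to put them on $\cA$. Your assertion that a degree-one interior vertex of $F$ ``coincides with an index~$2$ critical point of $\phi|_S$'' is unjustified: Definition~\ref{def:arboreal-decoration} only requires $d\phi\neq 0$ on the edges of $F$, so a leaf of $F$ need not be a critical point of $\phi$ at all, and conversely nothing forces the deaths of $\phi$ to occur at leaves of $F$ or anywhere near $F$. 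The paper's proof handles exactly this point: after perturbing $F$ off the critical set of $\phi$, one adjoins to $F$, for each index $2$ critical point, an edge obtained by flowing downward along the gradient until it meets $F$ (or $L_0$, after which one isotopes it to meet $F$). The enlarged graph is still a forest satisfying the definition, its regular neighborhood is isotopic to the old $\cS_{\ws}$ (one is just growing fingers), and the new $\cA$ now passes through every death. Without this step, condition \eqref{alt-def:arboreal-1} simply fails for your normalized $\cA$. Your local vertex analysis (monotonicity on edges, an $S^-$ maximum at a downward leaf, nothing at degree $2$, and $d-2$ minima of type $T^+$ at a degree-$d$ vertex) is correct and verifies condition \eqref{alt-def:arboreal-3} once the fingers have been added.

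For the converse you take a genuinely different route from the paper, which decomposes $\cS$ into elementary cobordisms (saddles in $\cS_{\zs}$, deaths along $\cA$, and cylinders on which $\phi|_{\cA}$ has a single $S^-$ or $T^+$ point) and uses that $\ws$-arboreality is closed under composition, checking each piece by hand. Your plan --- show $\chi(D)=1$ for each component $D$ of $\cS_{\ws}$, then assemble $F$ from downward gradient trajectories --- is reasonable, but both key steps are asserted rather than proved: the Euler characteristic count must use the boundary critical data to rule out, e.g., closed level circles of $\phi$ in the interior of $\cS_{\ws}$ (this is where conditions \eqref{alt-def:arboreal-1}--\eqref{alt-def:arboreal-2} and the decoration axioms enter), and you yourself flag that the branching degrees at the $T^+$ vertices, and hence the fact that $\cS_{\ws}$ really is a regular neighborhood of the resulting graph, are left unresolved. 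The paper's composition argument sidesteps both issues, since each elementary piece visibly contributes a cap, a trivalent vertex, or a product strip to the forest.
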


\begin{proof}
 Suppose first $\cS$ is $\ws$-arboreal with respect to $\phi$, and let $F$ be
 the forest in the definition. Perturbing $F$ slightly, we may assume that
 $F$ is disjoint from the critical set of $\phi$. Next, we add an edge to $F$ for each
 index 2 critical point, by flowing from each critical point downward along the
 gradient of $\phi$, until it hits either an edge of $F$ or $L_0$. If it hits
 $F$, we are done. If the downward trajectory hits $L_0$, we then isotope it
 near $L_0$ so that it intersects an edge of $F$. Let $\cA = \cl(N(F) \setminus \d S)$,
 for a suitably chosen regular neighborhood $N(F)$ of $F$. After moving
 $\cA$ very slightly near the index 2 critical points, so they lie on $\cA$,
 it is straightforward to check that it has the stated properties.

 Conversely, suppose that $\cS$ satisfies the properties
 \eqref{alt-def:arboreal-1}, \eqref{alt-def:arboreal-2},
 \eqref{alt-def:arboreal-3}. We may decompose $\cS$ into a composition of
 standard $\ws$-saddles, deaths occurring along the dividing set, as well as
 cylindrical link cobordisms where $\phi|_{\cA}$ has a single critical point,
 which further has the configuration of type $T^+$ or $S^-$.
 Since the $\ws$-arboreal property is preserved by
 composition, it is sufficient to check the $\ws$-arboreal condition for each
 of the above elementary cobordisms, which is straightforward.
\end{proof}

\begin{proof}[Proof of Theorem~\ref{thm:ascending-Stein}]
According to the work of Eliashberg~\cite{EliashbergStein} and Gompf~\cite{GompfStein}, the Morse
function $\phi$ has critical points only of index 0, 1, and 2. Passing an index~1
critical point of $\phi$ has the effect of removing two standard Darboux
balls, and gluing a standard $S^2 \times I$ along two convex copies of $S^2$.
The 2-handles are attached along contact $(-1)$-framed Legendrians.
Furthermore, if $W$ is connected and $Y_0$ is non-empty,
then we may assume that there are no 0-handles.

By Lemma~\ref{lem:ascending-surface}, $\phi|_S$ has only index 0 and 1
critical points. By Lemma~\ref{lem:ascending-restatement}, after an isotopy
of the decoration, we may assume that the dividing set intersects any index 0
critical point of $\phi|_S$, all index 1 critical points of $\phi|_S$ occur
in $\cS_{\zs}$, and also that every critical point of $\phi|_{\cA}$ is of
type $S^+$ or $T^-$. It is sufficient to prove the case when $(W,\cS)$ is an
ascending link cobordism, with $\ws$-anti-arboreal decoration, such that
$(W,\cS)$ contains at most 1 critical point of one of the functions $\phi$,
$\phi|_{\cS}$, or $\phi|_{\cA}$.

First, if $\phi$, $\phi|_{\cS}$, and $\phi|_{\cA}$ have no critical points,
and $\d W = -Y_0 \cup Y_1$, with $\phi(Y_i) = i$,
then there is a diffeomorphism between $W$ and $I \times Y$
that intertwines $\phi$ with the projection onto the first factor, and
intertwines the contact structure $i_V \omega$ on $\phi^{-1}(c)$ with some
fixed contact structure $\xi$ on $Y$, for all $c \in I$; see
\cite{HaydenSteinSurfaces}*{Lemma~4.12}. This diffeomorphism maps $S$ to the
trace of a transverse isotopy of links in  $(Y,\xi)$. A straightforward
Moser-type argument implies that $S$ is the trace of a contact isotopy of
$(Y,\xi)$. Furthermore, this isotopy may be assumed to be compatible with the
dividing set. Hence, functoriality of $\tra^\circ$ , for this cobordism,
follows from the naturality of $\tra^\circ$,
Proposition~\ref{prop:naturality}.

The proof of the statement for Weinstein 1-handles and 2-handles adapts
immediately from the proof given by Ozsv\'{a}th and Szab\'{o}
\cite{OSContactStructures} for functoriality of the contact invariant of
closed 3-manifolds with respect to Stein cobordisms, as in \cite{JCob}*{Theorem~11.24}.
See \cite{OSContactStructures}*{Theorem~4.2}  for the proof of the statement
about Weinstein 2-handles. For Weinstein 1-handles, the computation is as
follows. If $\cO=(S,P,h)$ is an open book for $(Y_0,\bL_0)$, and
$(Y_1,\bL_1)$ is obtained by adding a Weinstein 1-handle, then we may obtain
an open book for $(Y_1,\bL_1)$, by attaching a band to $\d S$, and extending
$h$ via the identity over the band. A straightforward computation verifies
the claim in this case.

It remains to verify the statement for cobordisms which have a single
critical point of $\phi|_S$ or a critical point of $\phi|_{\cA}$. The claim
for index 0 critical points of $\phi|_{S}$ is Lemma~\ref{lem:birth}. The
claim for index 1 critical points of $\phi|_{S}$ is Lemma~\ref{lem:saddles}.
Finally, the claim for critical points of $\phi|_{\cA}$ is
Lemma~\ref{lem:quasi-stabilization}. The proof is complete.
\end{proof}

We now sketch the proof of the perturbed version:

\begin{proof}[Proof of Proposition~\ref{prop:ascending-perturbed-coefficients}]
The proof is similar to the proof of Theorem~\ref{thm:ascending-Stein}. We
decompose the cobordism $(W,\cS)$ into a sequence of positive births,
positive saddles, $S^+$ and $T^-$ decorations, and Weinstein 1- and
2-handles. In between the moves, we may also have to perform open book
isotopies, and Giroux stabilizations of the open books.

The key observation is that, for each of the associated maps, if $\cO_0$ is an open book for the
incoming end, and $\cO_1$ is an open book for the outgoing end, then the dual
of the cobordism or naturality map sends the canonical intersection point
$\xs(\cO_1)\otimes 1$ to $\xs(\cO_0)\otimes e^{(a_1,\dots, a_n)}$. For
example, the dual of the cobordism map for a positive saddle counts exactly
one homotopy class of triangles when applied to $\xs(\cO_1)$, as illustrated
in
Figure~\ref{fig:14}. The other types of elementary ascending and Weinstein cobordisms are analyzed similarly.
\end{proof}

\section{Proof of Theorem~\ref{thm:main}}

We are now ready to prove Theorem~\ref{thm:main} from the introduction.

\begin{proof}[Proof of Theorem~\ref{thm:main}]
According to Rudolph~\cite[Lemma~2]{RudolphSlice}, the untwisted positive Whitehead
double of a nontrivial strongly quasipositive knot is strongly quasipositive.
Furthermore, the genus one Seifert surface obtained by taking an untwisted annulus
about the knot, and plumbing it with a $+1$ twisted annulus about the unknot is
a quasipositive Seifert surface for the Whitehead double.

Let $K_0$ be a non-trivial strongly quasipositive knot in $S^3$,
write $K_1$ for its untwisted positive Whitehead double,
and let $K$ be the untwisted positive Whitehead double of $K_1$.
Then both $K_1$ and $K$ are strongly quasipositive.
Furthermore, they are topologically slice by the work of Freedman~\cite{FQ},
as they have Alexander polynomial one. Let $S$ be the standard genus one
Seifert surface of $K$, obtained by plumbing an untwisted annulus about $K_1$
with a $+1$ twisted annulus about the unknot. In particular, $\gamma := K_1$
lies on the surface $S$, and the surface framing of $\gamma$ is trivial.
By construction, $\gamma$ is topologically slice, so it bounds a topological
disk in the complement of $S$. Furthermore, $\gamma$ is homologically nontrivial in $S$.

The pair $(S, \gamma)$ satisfies the assumptions of Theorem~\ref{thm:topiso}.
For every $n \in \Z$, choose a knot $J_n$ such that its
Alexander polynomial $\Delta_{J_n}(z)$ has $n$ irreducible factors.
For example, suppose that $J$ is a nontrivial knot such that $\Delta_{J}(z)$ is irreducible,
and take $J_n := \#_n J$.
Let $C_n := (J_n)_{\tw}$ be the 1-twist self-concordance of $J_n$,
as in Definition~\ref{def:1-twist}. As the automorphism $\phi$ of $S^3$ used
in the construction of $C_n$ is isotopic to $\id_{S^3}$ through automorphisms of $S^3$
that fix $J_n$ as a based knot pointwise throughout, $\phi$ induces the identity
on $\HFKh(J_n)$, and so $\Lef_z(C_n) = \Delta_{J_n}(z)$.
Finally, $F_{B^4, S} \neq 0$ by Corollary~\ref{cor:BRAID}, as $S$ is strongly quasipositive.
It follows from Theorem~\ref{thm:not-diffeomorphic} that the 1-twist rim surgered
surfaces $S(\gamma, C_n)$ are pairwise non-diffeomorphic.
\end{proof}

\bibliographystyle{plain}
\bibliography{biblio}
\end{document}